\documentclass[12pt,amstex]{amsart}
\usepackage{stmaryrd}
\usepackage{enumerate}
\usepackage{epsfig}
\usepackage{amsmath}
\usepackage{amssymb}
\usepackage{amscd}
\usepackage{graphicx}

\usepackage{pstricks}

\usepackage{tocvsec2}

\usepackage{hyperref}

\topmargin=0pt \oddsidemargin=0pt \evensidemargin=0pt
\textwidth=15cm \textheight=22.5cm \raggedbottom

\input xy
\xyoption{all}

\newtheorem{thm}{Theorem}[section]
\newtheorem{lem}[thm]{Lemma}

\newtheorem{prop}[thm]{Proposition}
\newtheorem{ques}[thm]{Question}
\newtheorem{cor}[thm]{Corollary}

\theoremstyle{definition}

\theoremstyle{remark}
\newtheorem{rem}[thm]{Remark}

\numberwithin{equation}{section}
\def \Z {\mathbb Z}

\def \N {\mathbb N}

\begin{document}
\title{measure-theoretic equicontinuity and rigidity}
\author{Fangzhou Cai}
\date{}
\begin{abstract}
Let $(X,T)$ be a topological dynamical system  and $\mu$ be a invariant measure, we show that $(X,\mathcal{B},\mu,T)$ is rigid if and only if there exists some subsequence $A$ of $\N$ such that $(X,T)$ is $\mu$-$A$-equicontinuous if and only if there exists some IP-set $A$ such that $(X,T)$ is $\mu$-$A$-equicontinuous. We  show that  if there exists some  subsequence $A$ of $\N$ with positive upper density such that  $(X,T)$ is $\mu$-$A$-mean-equicontinuous, then  $(X,\mathcal{B},\mu,T)$  is rigid. We also give results with respect to a function.
\end{abstract}

\maketitle

\section{introduction}
Throughout this paper, a topological dynamical system(t.d.s. for short) is a pair $(X,T)$, where $X$ is an non-empty compact metric space with a metric $d$ and $T$ is a homeomorphism  from $X$ to itself. For a t.d.s. $(X,T)$, denote by $M(X,T)$ the set of $T$-invariant regular Borel probability measures on $X$. It is well known that there exists some $\mu\in M(X,T)$, thus $(X,\mathcal{B},\mu,T)$ can be viewed as a measure preserving system(m.p.s. for short), where $\mathcal{B}$ is the Borel $\sigma$-algebra. 

A t.d.s. $(X,T)$ is called \textit{equicontinuous} if $\{T^n:n\in\N\}$ is  uniformly equicontinuous. For a t.d.s., equicontinuity represents predictability, one may see that the dynamical behaviour of such a system is very ``rigid". For example, it is well known that a transitive t.d.s. $(X,T)$  is equicontinuous if and only if it is topologically conjugate to a minimal rotation on a compact abelian metric group, if and only if it has topological discrete spectrum(see \cite{pw}).

The analogous concept of equicontinuity for a m.p.s. was also introduced. While studying cellular automata(a subclass of t.d.s.), Gilman \cite{Gil1,Gil2} introduced a notion of $\mu$-equicontinuity. Later Huang $et\ al$ \cite{hly} introduced a different definition of $\mu$-equicontinuity(which under some conditions are equivalent \cite{fe1}) and showed that $\mu$-equicontinuous systems have discrete spectrum. Garc\'ia-Ramos \cite{fe2} introduced a weakening of $\mu$-equicontinuity called $\mu$-mean-equicontinuity and showed that if a m.p.s. $(X,\mathcal{B},\mu,T)$ is an ergodic system, then it has discrete spectrum if and only if it is  $\mu$-mean-equicontinuous. Recently,  Huang $et\ al$ \cite{h} proved that for a general m.p.s. $(X,\mathcal{B},\mu,T)$, it has discrete spectrum if and only if it is  $\mu$-mean-equicontinuous.

The notion of rigidity in ergodic theory was first
introduced by Furstenberg and Weiss in \cite{ri}.  A m.p.s. $(X,\mathcal{B},\mu,T)$ is called \textit{rigid} if there exists a subsequence $\{t_k\}$ of $\N$ such that $T^{t_k}f\stackrel{L^2}\longrightarrow f$ for all $f\in L^2(\mu)$. So rigidity is a spectral property. A function $f\in L^2(\mu)$ is called \textit{rigid} if there exists a subsequence $\{t_k\}$ of $\N$ such that $T^{t_k}f\stackrel{L^2}\longrightarrow f$. Note that the sequence $\{t_k\}$ may depend on $f$. It is clear  that if $(X,\mathcal{B},\mu,T)$ is rigid, then every function in $L^2(\mu)$ is rigid. An interesting result showed that the converse is true (see \cite[Corollary 2.6]{ra}).

The notion of uniformly rigidity was introduced by
Glasner and Maon in \cite{ur} as a topological analogue of
rigidity in ergodic theory. A t.d.s. $(X,T)$ is \textit{uniformly rigid} if there exists a subsequence $\{t_k\}$ of $\N$ such that $T^{t_k}\rightarrow id$ uniformly on $X$, where $id$ is the identity mapping.
 
The concept of rigidity is close to  equicontinuity. 
In \cite{hy1}, the authors proved the following theorem:
\begin{thm}\cite[Lemma 4.1]{hy1}
Let $(X,T)$ be a t.d.s.. If $(X,T)$ is uniformly rigid, then there exists an IP-set $A$ such that $(X,T)$ is $A$-equicontinuous. If in addition $(X,T)$ is an E-system, the converse holds. 	
\end{thm}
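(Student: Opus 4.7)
The plan is to establish the two implications separately.

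For the forward direction, starting from a uniform rigidity sequence $\{t_k\}$ with $T^{t_k}\to\mathrm{id}$ uniformly, I would inductively thin it to $\{p_i\}$ satisfying $\|T^{p_i}-\mathrm{id}\|_\infty<2^{-i}$ and set $A=\mathrm{FS}(\{p_i\})$, the IP-set of finite sums. The heart of the argument is to upgrade this summability into equicontinuity of the family $\{T^n:n\in A\}$. I would do this by defining $\Phi:\{0,1\}^{\N}\to C(X,X)$ sending $\sigma$ to the uniform limit $\lim_N T^{\sum_{i\le N}\sigma_i p_i}$. The estimate
\[
\|T^{\sum_{i\le N+m}\sigma_i p_i}-T^{\sum_{i\le N}\sigma_i p_i}\|_\infty\le \sum_{i>N}\|T^{p_i}-\mathrm{id}\|_\infty\le 2^{-N}
\]
makes the partial compositions uniformly Cauchy, so $\Phi$ is well defined. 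Continuity from the product topology on $\{0,1\}^{\N}$ to the uniform topology on $C(X,X)$ then follows because $\sigma^{(k)}\to\sigma$ forces agreement on any fixed initial segment for large $k$, and the tail is controlled by $2^{-N}$. Hence $\Phi(\{0,1\}^{\N})$ is a compact subset of $C(X,X)$ containing $\{T^n:n\in A\}$, and Arzel\`a--Ascoli delivers equicontinuity.

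For the converse, let $A=\mathrm{FS}(\{p_i\})$ be an IP-set with $\{T^n:n\in A\}$ equicontinuous, so that its uniform closure in $C(X,X)$ is compact. Consider the partial sums $s_N=p_1+\cdots+p_N\in A$ and extract a uniformly convergent subsequence $T^{s_{N_k}}\to S$. The differences $d_k=s_{N_{k+1}}-s_{N_k}=\sum_{i=N_k+1}^{N_{k+1}}p_i$ still lie in $A$, so a further subsequence gives $T^{d_{k_\ell}}\to U$ uniformly. Passing to the limit in the identity $T^{s_{N_{k+1}}}=T^{d_k}\circ T^{s_{N_k}}$, which is legitimate because uniform limits of continuous maps compose continuously on a compact space, yields $S=U\circ S$. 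Since each $T^{s_{N_k}}$ is a homeomorphism of the compact space $X$, the uniform limit $S$ is surjective---given $z\in X$, preimages $x_k$ of $z$ under $T^{s_{N_k}}$ accumulate at some $x$ with $S(x)=z$---so $U=\mathrm{id}$ and $\{d_{k_\ell}\}\subseteq A$ witnesses uniform rigidity.

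The main obstacle I anticipate is pinning down the precise role of the E-system hypothesis, since the converse sketched above uses only uniform $A$-equicontinuity and no recurrence or measure information. A likely reading is that $A$-equicontinuous in \cite{hy1} is a pointwise notion (equicontinuity of the family at a single distinguished point), in which case the Arzel\`a--Ascoli step above fails. Under that reading the E-system hypothesis contributes a transitive point $x_0$ with dense orbit together with an invariant measure of full support, which guarantees recurrence of $x_0$ along elements of $A$; one would then use pointwise equicontinuity at $x_0$ to propagate $T^{d_k}x_0\to x_0$ to a neighbourhood of $x_0$, and density of the orbit to extend the uniform convergence over all of $X$.
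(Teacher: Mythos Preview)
This theorem is quoted from \cite{hy1} and is \emph{not} proved in the present paper; it is stated only to motivate the paper's own strengthening, which appears immediately afterwards and drops the E-system hypothesis entirely. So there is no ``paper's own proof'' of this statement to compare against directly, but your proposal can be compared with the paper's proof of the stronger result.

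Your forward direction (uniform rigidity $\Rightarrow$ IP-set $A$-equicontinuity) is correct. The construction---thin the rigidity sequence so that $\|T^{p_i}-\mathrm{id}\|_\infty<2^{-i}$ and take $A=\mathrm{FS}\{p_i\}$---is the standard one, and your compactness argument via the map $\Phi:\{0,1\}^{\N}\to C(X,X)$ is a clean way to package the telescoping estimate. The paper does not prove this direction in the topological setting (it cites \cite{hy1} for it), but it carries out the analogous construction in the measure-theoretic IP result with essentially the same telescoping idea.

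For the converse, your instinct is exactly right: the E-system hypothesis is unnecessary under this paper's definition of $A$-equicontinuity (which is uniform equicontinuity of the family $\{T^a:a\in A\}$, not a pointwise notion). This is precisely the content of the paper's strengthened theorem. Your argument via the surjective limit $S$ and the relation $S=U\circ S$ is correct, but the paper's route is shorter: once Arzel\`a--Ascoli gives precompactness of $\{T^{a_n}\}$ in $C(X,X)$, for each $k$ cover by finitely many balls $B(T^{a_i},1/k)$, pick any $a_n$ large in one such ball, and use the identity
\[
\|T^{a_n-a_i}-\mathrm{id}\|_\infty=\|T^{a_n}-T^{a_i}\|_\infty<\tfrac{1}{k},
\]
which holds because $T^{a_i}$ is a bijection of $X$. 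This produces a rigidity sequence directly, with no need to pass to limits $S,U$ or invoke surjectivity. Your final paragraph speculating about a pointwise reading of $A$-equicontinuity is therefore moot: the definition here is the uniform one, your converse already works without the E-system assumption, and the paper confirms this.
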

In fact,  in this paper we prove that:
\begin{thm}
	Let $(X,T)$ be a t.d.s.. Then $(X,T)$ is uniformly rigid if and only if  there exists an IP-set $A$ such that $(X,T)$ is $A$-equicontinuous, if and only if  there exists a subsequence $A$ of $\N$ such that $(X,T)$ is $A$-equicontinuous.
\end{thm}

It is natural to consider the corresponding relation between   measure-theoretic rigidity and measure-theoretic equicontinuity. Following this idea, in our paper we prove that:  

\begin{thm}
Let $(X,T)$ be a t.d.s.   and $\mu\in M(X,T)$. Then $(X,\mathcal{B},\mu,T)$ is rigid if and only if there exists some subsequence $A$ of $\N$ such that $(X,T)$ is $\mu$-$A$-equicontinuous if and only if there exists some IP-set $A$ such that $(X,T)$ is $\mu$-$A$-equicontinuous.
\end{thm}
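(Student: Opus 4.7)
I will prove the cycle (c)~$\Rightarrow$~(b)~$\Rightarrow$~(a)~$\Rightarrow$~(c), where (a) denotes rigidity, (b) existence of a subsequence witnessing $\mu$-$A$-equicontinuity, and (c) existence of an IP-set witness. The implication (c)~$\Rightarrow$~(b) is immediate because every IP-set is a subsequence of $\N$.

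For (b)~$\Rightarrow$~(a), let $A=\{a_k\}$ satisfy $\mu$-$A$-equicontinuity and, for each $m\in\N$, pick $K_m$ with $\mu(K_m)>1-1/m$ and $\{T^n|_{K_m}:n\in A\}$ equicontinuous. By Arzel\`a--Ascoli this family is relatively compact in $C(K_m,X)$, so a diagonal extraction over $m$ yields a subsequence $\{a_{k_l}\}\subseteq A$ and a $\mu$-measurable $\phi:X\to X$ with $T^{a_{k_l}}x\to\phi(x)$ for $\mu$-a.e.\ $x$. The Koopman operator $U_\phi$ is the SOT-limit of the unitaries $U_{T^{a_{k_l}}}$; since an SOT-limit of unitaries that is an isometry is unitary (pass $U_nU_n^*=I$ to the limit), $U_\phi$ is unitary and hence $U_{T^{-a_{k_l}}}=U_{T^{a_{k_l}}}^*\to U_\phi^*$ in SOT. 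The differences $U_{T^{a_{k_l}-a_{k_m}}}=U_{T^{a_{k_l}}}U_{T^{a_{k_m}}}^*$ therefore converge strongly to $I$ upon sending first $l\to\infty$ and then $m\to\infty$, and a diagonalization over a countable dense subset of $L^2(\mu)$ produces a positive increasing $\{b_r\}$ with $U_{T^{b_r}}\to I$ strongly --- that is, rigidity.

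For (a)~$\Rightarrow$~(c), start with a rigidity sequence $\{t_k\}$ and the countable dense family $\{g_j(\cdot)=d(\cdot,y_j)\}\subseteq C(X)$ for $\{y_j\}$ countable dense in $X$. A standard diagonal extraction produces $\{s_l\}\subseteq\{t_k\}$ along which $T^{s_l}x\to x$ for $\mu$-a.e.\ $x$. Build $A=FS(\{a_i\})$ by selecting $a_i=s_{l_i}$ so deep in $\{s_l\}$ that the defect
$$D_i\;:=\;\int_X d(T^{a_i}x,x)^2\,d\mu(x)\;<\;4^{-i}/i^3,$$
which is achievable using an $\eta_i$-net on $X$ together with the $L^2$-smallness of $T^{a_i}g_j-g_j$. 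For each $\tau>0$, set $r_i(\tau):=2^i\sqrt{D_i/\tau}\le i^{-3/2}\tau^{-1/2}$ and $B_i^{(\tau)}:=\{x:d(T^{a_i}x,x)\ge r_i(\tau)\}$ (so Chebyshev gives $\mu(B_i^{(\tau)})\le\tau\cdot 4^{-i}$), and define
$$K_\tau\;:=\;\bigcap_{i\ge 1}\;\bigcap_{m\in FS(\{a_1,\dots,a_{i-1}\})\cup\{0\}}T^{-m}\bigl(X\setminus B_i^{(\tau)}\bigr).$$
The union bound $\mu(X\setminus K_\tau)\le\sum_{i\ge 1}2^{i-1}\cdot\tau\cdot 4^{-i}=\tau/2<\tau$ is immediate. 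To verify equicontinuity of $\{T^n|_{K_\tau}:n\in A\}$, given $\epsilon>0$ choose $N$ large enough that $\sum_{i>N}r_i(\tau)<\epsilon/3$, and split $n=\sum_{i\in F}a_i\in A$ as $n_{\text{small}}+n_{\text{tail}}$ at level $N$. For $x\in K_\tau$, each partial sum $m$ appearing in the telescoping of $T^{n_{\text{tail}}}\circ T^{n_{\text{small}}}$ lies in $FS(\{a_1,\dots,a_{j-1}\})\cup\{0\}$, where $j>N$ is the next index consumed, so $T^m x\notin B_j^{(\tau)}$ by the very definition of $K_\tau$. Telescoping yields $d(T^{n_{\text{tail}}}(T^{n_{\text{small}}}x),T^{n_{\text{small}}}x)\le\sum_{j\in F,\,j>N}r_j(\tau)<\epsilon/3$, and likewise for $y\in K_\tau$. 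The finite family $\{T^m:m\in FS(\{a_1,\dots,a_N\})\cup\{0\}\}$ of uniformly continuous maps on compact $X$ has a common modulus of continuity $\omega_N$, so choosing $\delta:=\omega_N^{-1}(\epsilon/3)$ completes a three-$\epsilon$ estimate $d(T^n x,T^n y)<\epsilon$.

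The main obstacle is the calibration that lets a \emph{single} IP-set $A$ witness $\mu$-$A$-equicontinuity for every $\tau>0$: fixed thresholds leave $\mu(K_\tau)$ bounded below $1$ by a universal constant. The remedy above is to let the thresholds $r_i(\tau)$ carry all the $\tau$-dependence while the generating sequence $\{a_i\}$ stays fixed, which works because rigidity allows the defects $D_i$ to be made as small as any prescribed $4^{-i}/i^3$ (simultaneously over $i$ by successive choices deep in $\{s_l\}$). The combinatorial point --- that each intermediate partial sum in the telescoping is indexed by elements strictly smaller than the next consumed index --- is what keeps the union bound finite against the $2^{i-1}$ count of partial sums using $\{a_1,\dots,a_{i-1}\}$.
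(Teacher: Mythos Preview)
Your (c)$\Rightarrow$(b) and (a)$\Rightarrow$(c) are fine; the latter is essentially the paper's argument. The paper also passes to a subsequence along which $T^{s_l}x\to x$ a.e., selects generators $a_i$ so that the bad sets $\{x:d(T^{a_i}x,x)\ge 2^{-i}\}$ have measure $<4^{-i}$, intersects preimages of the good sets over all partial sums with indices $<i$, and telescopes with a three-$\epsilon$ split at a cutoff level $N$. Your $\tau$-dependent thresholds $r_i(\tau)$ versus the paper's fixed thresholds $2^{-i}$ with a $\tau$-dependent starting level $N$ are just two ways to organize the same union bound.

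Your (b)$\Rightarrow$(a) has a genuine gap. The assertion that an SOT limit of unitaries is unitary is false, and ``pass $U_nU_n^*=I$ to the limit'' does not work: adjunction is only WOT-continuous, so $U_n^*$ need not converge SOT, and multiplication is not jointly SOT-continuous. Concretely, on $\ell^2(\N)$ the cyclic permutations $U_n$ (sending $e_k\mapsto e_{k+1}$ for $k<n$, $e_n\mapsto e_0$, and fixing $e_k$ for $k>n$) are unitary and converge SOT to the unilateral shift, a non-surjective isometry. So as written you cannot conclude $U_{T^{-a_{k_l}}}\to U_\phi^*$ in SOT. The repair is short: you only need that $U_\phi$ is an \emph{isometry}, hence $U_\phi^*U_\phi=I$. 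Then $U_{T^{-a_{k_m}}}U_\phi f=U_{T^{a_{k_m}}}^*U_\phi f\to U_\phi^*U_\phi f=f$ weakly (by WOT-continuity of the adjoint) with constant norm $\|f\|$, and in a Hilbert space weak convergence together with convergence of norms gives norm convergence; your diagonalization then goes through unchanged. The paper sidesteps this operator-theoretic issue altogether: it deduces precompactness of $\{U_T^{a_n}f:n\in\N\}$ for every $f\in L^2(\mu)$ (either via vanishing sequence entropy along all $S\subseteq A$, or by showing each $f\in C(X)$ is $\mu$-$A$-equicontinuous as a function) and then extracts the rigidity sequence by a product-space pigeonhole over a countable dense family.
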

Inspired by the work in \cite{h}, we also show that:
\begin{thm}
Let $(X,T)$ be a t.d.s. and $\mu\in M(X,T)$. If there exists a subsequence $A$ of $\N$ with $\bar{D}(A)>0$ such that  $(X,T)$ is $\mu$-$A$-mean-equicontinuous, then  $(X,\mathcal{B},\mu,T)$ is rigid.
\end{thm}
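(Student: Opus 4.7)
The plan is to construct, for every $\varepsilon > 0$, an integer $t \in \N$ satisfying $\int d(x, T^t x)\, d\mu(x) \leq C\varepsilon$ for some absolute constant $C$, and then diagonalize along $\varepsilon_k = 1/k$ to obtain a sequence $t_k \to \infty$ with $\int d(x, T^{t_k} x)\, d\mu \to 0$. This gives convergence in $\mu$-measure of $T^{t_k} x$ to $x$; combined with the uniform continuity of any $f \in C(X)$ on the compact space $X$ and the bounded convergence theorem, it upgrades to $\|T^{t_k} f - f\|_{L^2} \to 0$ for continuous $f$, and by density for every $f \in L^2(\mu)$, which is exactly rigidity.

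To locate such a $t$, I would first unpack $\mu$-$A$-mean-equicontinuity: there is a compact set $K$ with $\mu(K) > 1-\varepsilon$ and $\delta > 0$ such that for all $x, y \in K$ with $d(x,y)<\delta$,
\[
\limsup_N \frac{1}{|A \cap [0,N-1]|} \sum_{n \in A \cap [0,N-1]} d(T^n x, T^n y) < \varepsilon.
\]
Covering $K$ by finitely many balls of radius $\delta/2$ and applying pigeonhole produces a ball $B$ with $\mu(B \cap K) \geq c > 0$; set $B' := B \cap K$. Khintchine's recurrence theorem supplies a syndetic (hence infinite) set of $t \in \N$ with $\mu(B' \cap T^{-t} B') \geq \mu(B')^2/2$. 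For each such $t$ and each $x \in E_t := B' \cap T^{-t} B'$, the pair $(x, T^t x)$ lies in $K \times K$ at distance less than $\delta$, so the mean-equicontinuity hypothesis applied to it gives
\[
\limsup_N \frac{1}{|A \cap [0,N-1]|} \sum_{n \in A \cap [0,N-1]} d(T^n x, T^{n+t} x) < \varepsilon, \qquad x \in E_t.
\]
Integrating over $E_t$, exchanging sum and integral, and using the $T$-invariance of $\mu$ via the substitution $y = T^n x$ yields
\[
\limsup_N \int d(y, T^t y) \cdot \frac{1}{|A \cap [0,N-1]|} \sum_{n \in A \cap [0,N-1]} \mathbf{1}_{T^n E_t}(y)\, d\mu(y) \;\leq\; \varepsilon\, \mu(E_t).
\]
A weak-$\ast$ limit $\phi_t \in L^\infty(\mu)$ of the averaged indicators $\frac{1}{|A \cap [0,N-1]|} \sum_{n} \mathbf{1}_{T^n E_t}$ then satisfies $0 \leq \phi_t \leq 1$, $\int \phi_t\, d\mu = \mu(E_t)$, and $\int d(y, T^t y)\, \phi_t\, d\mu \leq \varepsilon \mu(E_t)$.

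The main obstacle, and the step where the hypothesis $\bar D(A) > 0$ must be used essentially, is upgrading this weighted inequality to the unweighted estimate $\int d(y, T^t y)\, d\mu \leq C\varepsilon$. A priori $\phi_t$ could concentrate on a $\mu$-small subset on which $d(y, T^t y)$ happens to be small, destroying the argument. Positive upper density of $A$ is what prevents this concentration: through a mean ergodic theorem applied along the positive-density subsequence (valid after passing through the ergodic decomposition, since $\mu$-$A$-mean-equicontinuity is a pointwise condition on compact sets of large measure and thus restricts to $\mu$-almost every ergodic component, while $\bar D(A) > 0$ is independent of the measure), one forces $\phi_t$ to coincide with the constant $\mu(E_t)$ fiber-wise, giving $\int d(y, T^t y)\, d\mu \leq \varepsilon$ in the ergodic fiber and, after integrating over the decomposition, the desired global bound. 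Selecting $t = t_k \to \infty$ with corresponding $\varepsilon_k \to 0$ then completes the construction of the rigidity sequence as outlined in the first paragraph; I expect the verification that the weak-$\ast$ limit $\phi_t$ is controlled by ergodicity along $A$ to be the most delicate point of the whole argument.
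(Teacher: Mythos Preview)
Your strategy is genuinely different from the paper's, and the gap you yourself flag as ``the most delicate point'' is in fact fatal in the form you describe. There is no mean ergodic theorem along an arbitrary subsequence $A$ of positive upper density: for an ergodic $(X,\mathcal{B},\mu,T)$ and $f\in L^2(\mu)$, the averages $\frac{1}{|A\cap[0,N-1]|}\sum_{n\in A\cap[0,N-1]} f\circ T^n$ need not converge, and even when a weak-$\ast$ limit exists it is in general \emph{not} $T$-invariant, hence has no reason to be constant. (For an irrational rotation, take $A$ to be the set of return times of the orbit of a fixed point to a short arc; $A$ has positive density, but the averages of $\mathbf 1_{E_t}\circ T^{-n}$ along $A$ are biased towards that arc.) Consequently your weak-$\ast$ limit $\phi_t$ can indeed concentrate on a small set, and the inequality $\int d(y,T^ty)\,\phi_t\,d\mu\le\varepsilon\mu(E_t)$ together with $\int\phi_t\,d\mu=\mu(E_t)$ only yields
\[
\int d(y,T^ty)\,d\mu\;\le\;\varepsilon\,\mu(E_t)+\mathrm{diam}(X)\bigl(1-\mu(E_t)\bigr),
\]
which is useless since $\mu(E_t)\ge\mu(B')^2/2$ is bounded away from $1$. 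The reduction to ergodic components is also not straightforward: $\mu(K)>1-\varepsilon$ does not give $\mu_\omega(K)>1-\varepsilon$ for \emph{every} ergodic component $\mu_\omega$, only for most of them.

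For comparison, the paper never tries to produce a single good time $t$ directly. Instead it reduces rigidity (via its Theorems~\ref{th3} and~\ref{main}) to showing that $\{U_T^{a_n}d:n\in\N\}$ is pre-compact in $L^1(\mu\times\mu)$, and then argues by contradiction: assuming an infinite $(17M\varepsilon)$-separated subset $\{U_T^{a_{n_i}}d\}$, it partitions a large compact $K$ into finitely many pieces of small diameter, builds piecewise-constant approximants $d_s$ to $U_T^{a_s}d$, and uses a pigeonhole/counting argument to find two indices $a_{n_i},a_{n_j}$ that are forced to be $L^1$-close. The hypothesis $\bar D(A)>1/M$ enters only as the counting input guaranteeing enough elements of $A$ in a finite window $[1,n_0]$ for the pigeonhole to bite; no ergodic theorem along $A$ is invoked.
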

Since rigidity implies zero entropy, we have following corollary:

\begin{cor}
	Let $(X,T)$ be a t.d.s.. If there exists a subsequence $A$ of $\N$ with $\bar{D}(A)>0$ such that $(X,T)$ is $A$-mean-equicontinuous, then the topological entropy of $(X,T)$ is zero.
\end{cor}

\section{preliminaries}
In this section we recall some notions and aspects of dynamical systems. 
\subsection{Subsets of non-negative integers}
In the article, integers and natural numbers are denoted by
$\Z$ and $\N$ respectively. 
Let $F$ be a set, denote by $|F|$ the number of elements of $F$. Let $A$ be a subset of $\N$. Define the \textit{lower density} $\underline{D}(A)$ and the \textit{upper density} $\bar{D}(A)$ of $A$ by:
\begin{equation*}
\underline{D}(A)=\liminf_{n\rightarrow\infty}\frac{|A\cap[0,n-1]|}{n},
\end{equation*}
\begin{equation*}
\bar{D}(A)=\limsup_{n\rightarrow\infty}\frac{|A\cap[0,n-1]|}{n}.
\end{equation*}
If $\underline{D}(A)=\bar{D}(A)$, denote by $D(A)=\underline{D}(A)=\bar{D}(A)$ the \textit{density} of $A$.

Let $\{b_i:i\in\N\}\subset\N$. Define
\begin{equation*}
 FS\{b_i\}=\{b_{i_1}+b_{i_2}+\cdots+b_{i_k}:i_1<i_2<\cdots<i_k,k\in\N\}.
\end{equation*}
We say $F\subset\N$ is an \textit{$IP$-set} if there exists a subset $\{b_i:i\in\N\}$ of $\N$ such that $F=FS\{b_i\}$.
\subsection{Discrete spectrum in measurable dynamics}

Let $(X,\mathcal{B},\mu,T)$ be a m.p.s., one can define the m.p.s. $(X\times X,\mathcal{B}\times\mathcal{B},\mu\times\mu,T)$ by $T(x,y)=(Tx,Ty)$ for $(x,y)\in X\times X$. One can define the Koopman operator  $U_T$ as an isometry of $L^p(\mu)$ into $L^p(\mu)$ by
$U_Tf=f\circ T$, where $1\leq p<\infty$. We say a function $f\in L^2(\mu)$ is \textit{almost periodic} if $\{U_T^nf:n\in\Z\}$ is pre-compact in $L^2(\mu)$, that is the closure of $\{U_T^nf:n\in\Z\}$ is compact in $L^2(\mu)$. We say that $(X,\mathcal{B},\mu,T)$ has \textit{discrete spectrum} if for all $f\in L^2(\mu)$, $f$ is almost periodic.

\subsection{Measure-theoretic equicontinuity}
Let $(X,T)$ be a t.d.s. and $\mu\in M(X,T)$, let $K$ be a subset of $X$ and $A=\{a_i:i\in\N\}$ be a subsequence of $\N$. 

	 We say that $K$ is \textit{$A$-equicontinuous} if for any $\epsilon>0$, there exists $\delta>0$ such that 
\begin{equation*}
x,y \in K,d(x,y)<\delta\Rightarrow d(T^{a_n}x,T^{a_n}y)<\epsilon,\forall n\in\N.
\end{equation*}

 We say $(X,T)$ is  \textit{$A$-equicontinuous} if $X$ is $A$-equicontinuous.

 We say $(X,T)$ is \textit{$\mu$-$A$-equicontinuous} if for any $\tau>0$, there exists a compact subset $K$ of $X$ with $\mu(K)>1-\tau$ such that $K$ is $A$-equicontinuous.

	By the regularity of $\mu$, the compactness of $K$ in the definition above can be ignored. It is clear that the definition of $\mu$-$A$-equicontinuity is independent on the choice of metric $d$.

\subsection{Sequence entropy for a measure}Let $(X,\mathcal{B},\mu,T)$ be a m.p.s. and $S=\{s_i:i\in\N\}$ be a subsequence of $\N$. Suppose $\xi$ and $\eta$ are two finite measurable partitions of $X$. Define  $\xi\vee\eta=\{A\cap B:A\in\xi,B\in\eta\}$. It is easy to see $\xi\vee\eta$ is also a finite partition of $X$. The \textit{entropy of $\xi$}, written $H_{\mu}(\xi)$, is defined by 
\begin{equation*}
H_{\mu}(\xi)=-\sum_{A\in\xi}\mu(A)\log\mu(A),
\end{equation*}
and the  \textit{entropy of $\xi$ given $\eta$}, written $H_{\mu}(\xi|\eta)$, is defined by
\begin{equation*}
H_{\mu}(\xi|\eta)=H_{\mu}(\xi\vee\eta)-H_{\mu}(\eta)=-\sum_{A\in\xi}\sum_{B\in\eta}\mu(A\cap B)\log\frac{\mu(A\cap B)}{\mu(B)}.
\end{equation*}
The \textit{sequence entropy of $\xi$ with respect to $(X,\mathcal{B},\mu,T)$ along $S$} is defined by
\begin{equation*}
h_{\mu}^S(T,\xi)=\limsup_{n\rightarrow\infty}\frac{1}{n}H_{\mu}(\bigvee_{i=1}^n T^{-s_i}\xi).
\end{equation*}
And the \textit{sequence entropy of $(X,\mathcal{B},\mu,T)$ along $S$} is
\begin{equation*}
h_{\mu}^S(T)=\sup_{\alpha}h_{\mu}^S(T,\alpha),
\end{equation*}
where supremum is taken over all finite measurable partitions. When $S=\N$, we simply write $h_{\mu}(T)$ and it is called the \textit{entropy of  $(X,\mathcal{B},\mu,T)$}.
\section{measure-theoretic  equicontinuity and rigidity  }
In this section, we study the relation between measure-theoretic  equicontinuity and rigidity. We prove that $(X,\mathcal{B},\mu,T)$ is rigid if and only if there exists some subsequence $A$ of $\N$ such that $(X,T)$ is $\mu$-$A$-equicontinuous. In the rest we give an IP-version of our result.
\subsection{Measure-theoretic equicontinuity, rigidity and sequence entropy}
In this subsection, we prove that  $\mu$-$A$-equicontinuity implies  rigidity via sequence entropy. 

\medskip

First we  give a characterization of measure-theoretic rigidity. It is worth pointing out the following theorem:
\begin{thm}\cite[Theorem 3.10]{hsy}
Let $(X,\mathcal{B},\mu,T)$ be a m.p.s.. Then $(X,\mathcal{B},\mu,T)$ is rigid if and only if there exists some IP-set $F$ such that $h_{\mu}^A(T)=0$ for any subsequence $A$ of $F$.
\end{thm}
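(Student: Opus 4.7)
Given a rigidity sequence $\{t_k\}$, I would construct the IP-set $F = FS\{b_i\}$ by diagonalization. Fix a countable dense subset $\{g_m\} \subset L^2(\mu)$ and inductively pick $b_i \in \{t_k\}$ so that for every finite sum $s = b_{i_1} + \cdots + b_{i_\ell}$ with $i_1 < \cdots < i_\ell$ one has $\|U_T^s g_m - g_m\|_2 < 2^{-i_1}$ for all $m \leq i_1$. Then every subsequence $A = \{a_i\} \subset F$ is itself a rigidity sequence. For an arbitrary finite partition $\xi = \{C_1, \ldots, C_k\}$, rigidity along $A$ gives $\mu(T^{-a_i} C_j \triangle C_j) \to 0$; after thinning $A$ to a summable tail, $\bigvee_{i \geq N} T^{-a_i}\xi$ coincides with $\xi$ outside an arbitrarily small set, so $H_\mu(\bigvee_{i=1}^n T^{-a_i}\xi)$ stays uniformly bounded in $n$. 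This yields $h_\mu^A(T, \xi) = 0$, and since $\xi$ was arbitrary, $h_\mu^A(T) = 0$.

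\textbf{Backward direction.} Now assume $F = FS\{b_i\}$ satisfies $h_\mu^A(T) = 0$ for every subsequence $A \subset F$. The plan is first to show, via a Kushnirenko-style argument, that $\{U_T^n f : n \in F\}$ is precompact in $L^2(\mu)$ for every $f \in L^2(\mu)$. Once this is in hand, extract a simultaneous convergent subsequence with $U_T^{b_{i_k}} f_m \to f_m^*$ for each $m$ in a countable dense subset of $L^2(\mu)$. The identity $U_T^{b_{i_\ell}-b_{i_k}}(U_T^{b_{i_k}} f_m) = U_T^{b_{i_\ell}} f_m$ forces $U_T^{b_{i_\ell}-b_{i_k}} f_m^* \to f_m^*$ as $k, \ell \to \infty$ with $\ell > k$. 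Diagonalizing further and invoking that each $U_T^n$ is an $L^2$-isometry, this upgrades to a sequence $\{n_j\} \subset \N$ with $U_T^{n_j} g \to g$ for every $g \in L^2(\mu)$, giving rigidity of $(X,\mathcal{B},\mu,T)$.

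\textbf{Main obstacle.} The crucial step is the first claim of the backward direction: upgrading ``$h_\mu^A(T) = 0$ for every $A \subset F$'' to ``$L^2$-precompactness of the $F$-orbit of every function''. Kushnirenko's theorem handles the case $F = \N$, but the $F$-restricted version is more delicate. I would approach it by contradiction: a non-precompact orbit would admit an $\epsilon$-separated subsequence indexed by some $A \subset F$, and by approximating the function with a finite partition via its level sets and exploiting the $\epsilon$-separation, one should produce a partition $\xi$ with $h_\mu^A(T, \xi) > 0$, contradicting the hypothesis. A secondary subtlety is that the SOT limit $L$ of $U_T^{b_{i_k}}$ is an isometry but may fail to be unitary, so the passage from ``rigidity on the range of $L$'' to ``rigidity on $L^2(\mu)$'' requires careful use of the fact that $U_T$ commutes with $L$ and preserves its range.
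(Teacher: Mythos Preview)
This theorem is not proved in the paper; it is quoted from \cite{hsy} as background. The paper does, however, prove the closely related subsequence version (Theorem~\ref{th1}) and quotes the key technical lemma behind both results (Lemma~\ref{lem1}): for $B\in\mathcal B$, the set $\{U_T^n 1_B : n\in A\}$ is precompact in $L^2(\mu)$ if and only if $h_\mu^S(T,\{B,B^c\})=0$ for every subsequence $S$ of $A$. Your backward direction is essentially the same strategy the paper uses for Theorem~\ref{th1}: precompactness from Lemma~\ref{lem1}, then a diagonal/total-boundedness extraction of a rigidity sequence. The paper's extraction is more direct than your SOT-limit detour (it simply takes differences $n-n_1$ inside a small ball of the product space $(L^2)^{\mathbb N}$), and the ``secondary subtlety'' you worry about never arises there; but your route would also work.

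There is, however, a genuine gap in your forward direction. After building the IP-set $F$ so that every infinite subsequence $A\subset F$ is a rigidity sequence, you argue: ``rigidity along $A$ gives $\mu(T^{-a_i}C_j\triangle C_j)\to 0$; after thinning $A$ to a summable tail, $\bigvee_{i\ge N}T^{-a_i}\xi$ coincides with $\xi$ outside a small set, so $H_\mu(\bigvee_{i=1}^n T^{-a_i}\xi)$ stays bounded.'' But you must show $h_\mu^A(T,\xi)=0$ for \emph{every} subsequence $A\subset F$, not for a thinned $A'\subset A$; thinning changes the sequence entropy you are computing. Nor can you repair this by conditioning on $T^{-a_1}\xi$ and using $H_\mu(T^{-a_i}\xi\mid T^{-a_1}\xi)=H_\mu(T^{-(a_i-a_1)}\xi\mid\xi)$, since $a_i-a_1$ is in general neither in $F$ nor a rigidity time. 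The correct bridge is exactly the content of Lemma~\ref{lem1}: your construction already yields that $\{U_T^n 1_B:n\in F\}$ is precompact for every $B$ (indeed, every infinite subsequence is a rigidity sequence, so no $\epsilon$-separated subsequence exists), and Lemma~\ref{lem1} then gives $h_\mu^S(T,\{B,B^c\})=0$ for all $S\subset F$, hence $h_\mu^S(T)=0$. Replace the thinning paragraph with this appeal and the forward direction closes. (A small side remark: in your inductive construction the natural bound is in terms of the \emph{largest} index $i_\ell$ in $s=b_{i_1}+\cdots+b_{i_\ell}$, not the smallest $i_1$, since at step $i$ you control the new sums whose maximal index is $i$; this is what actually forces every infinite subsequence of $F$ to be a rigidity sequence.)
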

\begin{lem}\cite[Lemma 3.3]{hsy}\label{lem1}
	Let $(X,\mathcal{B},\mu,T)$ be a m.p.s. and $B\in\mathcal{B}$. Let $A$ be a subsequence of $\N$. Then ${\{U_T^n1_B:n\in A\}}$ is pre-compact in $L^2(\mu)$ if and only if
	$h_{\mu}^S(T,\{B,B^c\})=0$ for any subsequence $S$ of $A$.
\end{lem}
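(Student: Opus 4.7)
My plan is to translate the $L^2$ statement into an entropy statement via the identity $\|U_T^n 1_B - U_T^m 1_B\|_2^2 = \mu(T^{-n}B \triangle T^{-m}B)$, so that pre-compactness of $\{U_T^n 1_B : n \in A\}$ becomes total boundedness of the two-atom partitions $\xi_n := T^{-n}\{B,B^c\}$ (for $n\in A$) under the Rokhlin metric; both directions then reduce to entropy estimates.

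For the forward direction, I would fix a subsequence $S=\{s_i\}$ of $A$ and $\eta>0$, and use pre-compactness to pick a finite $\eta$-net $\xi_{n_1},\dots,\xi_{n_k}$ with $n_l\in A$. Setting $\beta=\bigvee_{l=1}^k \xi_{n_l}$, this is a \emph{fixed} partition with $H_\mu(\beta)\le k\log 2$, and each $\xi_{s_i}$ is $\eta$-close in Rokhlin distance to a sub-partition of $\beta$. Continuity of conditional entropy then yields $H_\mu(\xi_{s_i}\mid\beta)\le\varphi(\eta)$ for some $\varphi$ with $\varphi(\eta)\to 0$, and the chain rule combined with subadditivity of conditional entropy gives
\[
H_\mu\Bigl(\bigvee_{i=1}^n \xi_{s_i}\Bigr) \le H_\mu(\beta)+\sum_{i=1}^n H_\mu(\xi_{s_i}\mid\beta)\le k\log 2+n\varphi(\eta).
\]
Dividing by $n$, taking $\limsup$, and letting $\eta\to 0$ gives $h_\mu^S(T,\{B,B^c\})=0$.

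For the reverse direction, I would argue contrapositively. If the orbit is not pre-compact, it is not totally bounded, so I would extract an infinite $S=\{s_i\}\subset A$ and $\delta>0$ with $\mu(T^{-s_i}B \triangle T^{-s_j}B)\ge\delta$ for all $i\ne j$. Since $\mu(B)=p$ is constant (and we may assume $p\in(0,1)$, else the orbit is trivially pre-compact), the sets $\{T^{-s_i}B\}$ are $\delta$-separated with a fixed marginal measure. I would then refine $S$ further by a Ramsey/diagonal extraction so that, for every finite pattern $\epsilon\in\{0,1\}^k$, the joint atom mass $\mu(\bigcap_{j\le k} T^{-s_j}B^{\epsilon_j})$ converges. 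The $\delta$-separation then prevents the conditional probability of $T^{-s_i}B$ given $\bigvee_{j<i}\xi_{s_j}$ from concentrating at $0$ or $1$ on a set of near-full mass uniformly in $i$, producing a lower bound
\[
H_\mu\Bigl(\xi_{s_i}\Bigm|\bigvee_{j<i}\xi_{s_j}\Bigr)\ge c(\delta,p)>0
\]
for infinitely many $i$, and therefore $h_\mu^S(T,\{B,B^c\})>0$, contradicting the hypothesis.

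The hard part will be the reverse direction: raw $\delta$-separation of the sets $T^{-s_i}B$ does not itself control the conditional distribution against the past $\sigma$-algebra $\bigvee_{j<i}\xi_{s_j}$, so the Ramsey-type extraction stabilizing the joint statistics is essential before a positive lower bound on the incremental entropy can be read off, and tracking the dependence of $c$ on $(\delta,p)$ requires a careful convexity estimate for the binary entropy function.
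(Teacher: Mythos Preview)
The paper does not prove this lemma at all; it is quoted from \cite[Lemma~3.3]{hsy} and used as a black box, so there is no in-paper argument to compare against. On its own merits: your forward direction is correct and is the standard route---a finite $\eta$-net gives a fixed partition $\beta=\bigvee_{l=1}^k\xi_{n_l}$, continuity of conditional entropy in the Rokhlin metric yields $H_\mu(\xi_{s_i}\mid\beta)\le\varphi(\eta)$, and the chain rule finishes.

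Your reverse direction has the right architecture (contrapositive; extract a $\delta$-separated infinite $S\subset A$; exhibit a further subsequence of positive sequence entropy), but the Ramsey/exchangeable-limit step is underspecified and more circuitous than needed. As written, ``$\mu\bigl(\bigcap_{j\le k}T^{-s_j}B^{\epsilon_j}\bigr)$ converges'' has no free index tending to infinity; if you intend that $k$-tuples with all indices large have converging joint laws, you obtain a limiting exchangeable $\{0,1\}$-process with positive entropy rate (via de~Finetti and $P(X_1\ne X_2)\ge\delta$), but this controls windows sliding to infinity, not $\tfrac1n H_\mu\bigl(\bigvee_{i=1}^n\xi_{s_i}\bigr)$, which involves the \emph{first} $n$ extracted terms. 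Bridging that gap requires yet another extraction, at which point you are effectively doing the following cleaner inductive argument. Having chosen $s_{i_1}<\cdots<s_{i_k}$ with generated (finite) algebra $\mathcal F_k$, suppose every remaining $j$ satisfied $H_\mu\bigl(\xi_{s_j}\mid\mathcal F_k\bigr)<c$; then each $T^{-s_j}B$ lies within $\eta(c)$ (with $\eta(c)\to0$ as $c\to0$) of some set in $\mathcal F_k$, and since $\mathcal F_k$ is finite while the remaining $j$'s are infinite, pigeonhole forces two of them within $2\eta(c)$ of each other---contradicting $\delta$-separation once $c=c(\delta)$ is small enough. Hence some $s_{i_{k+1}}>s_{i_k}$ has $H_\mu\bigl(\xi_{s_{i_{k+1}}}\mid\mathcal F_k\bigr)\ge c$, and the chain rule gives sequence entropy $\ge c>0$ along $\{s_{i_k}\}\subset A$. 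This avoids de~Finetti and any limiting process entirely.
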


Now we can give a characterization of rigidity:
\begin{thm}\label{th1}
Let $(X,\mathcal{B},\mu,T)$ be a m.p.s.. Then $(X,\mathcal{B},\mu,T)$ is rigid if and only if there exists some subsequence $A$ of $\N$ such that $h_{\mu}^S(T)=0$ for any subsequence $S$ of $A$, if and only if ${\{U_T^nf:n\in A\}}$ is pre-compact in $L^2(\mu)$ for all $f\in L^2(\mu)$.
\end{thm}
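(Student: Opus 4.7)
The plan is to show (1) $\Leftrightarrow$ (3) and (2) $\Leftrightarrow$ (3), with Lemma \ref{lem1} serving as the bridge between $L^2$ pre-compactness and sequence entropy, and a short difference trick used to promote pre-compact orbits back to rigidity. The implication (1) $\Rightarrow$ (3) is immediate: if the sequence $\{t_k\}$ witnesses rigidity, then with $A=\{t_k\}$ the orbit $\{U_T^n f:n\in A\}$ is a convergent sequence for every $f\in L^2(\mu)$, and hence pre-compact.

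For (3) $\Rightarrow$ (1), I fix $f\in L^2(\mu)$ and apply pre-compactness along $A=\{a_i\}$ to extract a subsequence with $U_T^{a_{i_k}}f\to g$ in $L^2$. Setting $b_k=a_{i_{k+1}}-a_{i_k}$ and using that each $U_T^{a_{i_k}}$ is an isometry,
\[
\|U_T^{b_k}f-f\|=\|U_T^{a_{i_{k+1}}}f-U_T^{a_{i_k}}f\|\longrightarrow 0,
\]
so every $f\in L^2(\mu)$ is rigid. Invoking \cite[Corollary 2.6]{ra} then promotes pointwise rigidity of every function to rigidity of the m.p.s.

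For (3) $\Leftrightarrow$ (2) the bridge is Lemma \ref{lem1} applied to $f=1_B$: it identifies pre-compactness of $\{U_T^n 1_B:n\in A\}$ with $h_\mu^S(T,\{B,B^c\})=0$ for every subsequence $S$ of $A$. From (3) I upgrade two-set partitions to arbitrary finite partitions by monotonicity and sub-additivity of sequence entropy: any $\alpha=\{A_1,\dots,A_k\}$ is coarser than $\bigvee_{i=1}^{k}\{A_i,A_i^c\}$, so $h_\mu^S(T,\alpha)\le\sum_{i=1}^{k}h_\mu^S(T,\{A_i,A_i^c\})=0$, yielding $h_\mu^S(T)=0$. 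Conversely, (2) and Lemma \ref{lem1} give pre-compactness of every indicator orbit along $A$; a successive-extraction argument passes this to simple functions, and a standard $\varepsilon/3$-net argument together with the isometry identity $\|U_T^n f-U_T^n\phi\|=\|f-\phi\|$ extends it to all of $L^2(\mu)$.

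The main obstacle is the difference trick in (3) $\Rightarrow$ (1): pre-compactness only yields convergence to some $g$, not to $f$ itself, and the isometric cancellation together with \cite[Corollary 2.6]{ra} are what close the gap from $g$ back to $f$ and from pointwise rigidity of every function to rigidity of the whole system. The density extension in (2) $\Rightarrow$ (3) is routine but requires transferring $\varepsilon$-nets uniformly in $n$, which is possible precisely because $U_T$ is an isometry of $L^2(\mu)$.
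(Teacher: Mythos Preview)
Your argument is correct, and the overall architecture (using Lemma \ref{lem1} as the bridge between (2) and (3), and a difference/isometry trick to return from pre-compactness to rigidity) matches the paper's. The one genuine divergence is in the step (3) $\Rightarrow$ (1). You show each individual $f\in L^2(\mu)$ is rigid along its own sequence $\{b_k^f\}$ and then invoke \cite[Corollary 2.6]{ra} to upgrade ``every function is rigid'' to ``the system is rigid''. The paper instead avoids that black box: it fixes a countable dense family $\{f_j\}\subset L^2(\mu)$, observes that the product $\prod_j\overline{\{U_T^n f_j:n\in A\}}$ is compact in $(L^2(\mu))^{\N}$, and uses total boundedness of the diagonal sequence $(U_T^n f_1,U_T^n f_2,\dots)_{n\in A}$ there to run the difference trick \emph{simultaneously} for all $f_j$, extracting a single increasing $\{t_k\}$ with $U_T^{t_k}f_j\to f_j$ for every $j$, hence for every $f$. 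Your route is shorter if one is happy to cite \cite{ra}; the paper's route is self-contained and actually exhibits the common rigid sequence. A minor point: in your (3) $\Rightarrow$ (1) you should remark that the differences $b_k=a_{i_{k+1}}-a_{i_k}$ can be chosen to tend to infinity (by taking $i_{k+1}$ large enough at each stage), so that $\{b_k\}$ is a genuine subsequence of $\N$; the paper handles the analogous issue explicitly. Your explicit treatment of (3) $\Rightarrow$ (2) via $\alpha\preceq\bigvee_i\{A_i,A_i^c\}$ and subadditivity of sequence entropy is fine and is left implicit in the paper.
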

\begin{proof}
We only need to prove that: If there exists some subsequence $A$ of $\N$ such that $h_{\mu}^S(T)=0$ for any subsequence $S$ of $A$, then $(X,\mathcal{B},\mu,T)$ is rigid.
By Lemma \ref{lem1}, we have that for any $B\in\mathcal{B}$, $\overline{\{U_T^n1_B:n\in A\}}$ is compact in  $L^2(\mu)$. Let $\{f_j:j\in\N\}$ be a dense subset of $L^2(\mu)$, then for any $j\in\N$, $\overline{\{U_T^nf_j:n\in A\}}$ is compact in  $L^2(\mu)$. Hence
\begin{equation*}
\prod_{j\in\N}\overline{\{U_T^nf_j:n\in A\}} \text{ is compact in }  (L^2(\mu))^{\N}.
\end{equation*}
 It follows that
 \begin{equation*}
\{(U_T^nf_1,\cdots,U_T^nf_j,\cdots): n\in A\}\subset\prod_{j\in\N}\overline{\{U_T^nf_j:n\in A\}} 
\end{equation*}
is totally bounded.
For each $k\in\N$, there exist $n_1,\ldots,n_N\in A$ such that
\begin{equation*}
\{(U_T^nf_1,\cdots,U_T^nf_j,\cdots): n\in A\}\subset\bigcup_{i=1}^N B((U_T^{n_i}f_1,\cdots,U_T^{n_i}f_j,\cdots),\frac{1}{k}).
\end{equation*}
Without loss of generality, we assume $B((U_T^{n_1}f_1,\cdots,U_T^{n_1}f_j,\cdots),\frac{1}{k})$ contains infinite many elements of the left set. Hence there are infinite $n\in A$ such that
\begin{equation*}
\begin{split}
d((U_T^nf_1,\cdots,U_T^nf_j,\cdots),(U_T^{n_1}f_1,\cdots,U_T^{n_1}f_j,\cdots))&=\\
\sum_{j\in\N}\frac{\|U_T^nf_j-U_T^{n_1}f_j\|_{L^2}}{2^j}&=\sum_{j\in\N}\frac{\|U_T^{n-n_1}f_j-f_j\|_{L^2}}{2^j}<\frac{1}{k}.
\end{split}
\end{equation*}	
Take $t_k=n-n_1$, since $n$ can be sufficiently large, we can choose $\{t_k\}$ increasing. It follows that
$\|U_T^{t_k}f_j-f_j\|_{L^2}<\frac{2^j}{k}$  for all $j,k\in\N$.
Hence 
\begin{equation*}
U_T^{t_k}f_j\stackrel{L^2}\longrightarrow f_j\text{  as }k\rightarrow\infty\text{ for each }f_j.
\end{equation*}
It follows that
$U_T^{t_k}f\stackrel{L^2}\longrightarrow f$ for each $f\in L^2(\mu)$.
\end{proof}

Now we study the relation between measure-theoretic equicontinuity and  measure-theoretic sequence entropy.

Let $A=\{a_i:i\in\N\}$ be a subsequence of $\N$. Let $\mathcal{U}$ be a finite open cover of $X$ and $K$ be a subset of $X$. Denote by $N(\mathcal{U}|K)$  the minimum among the cardinalities of the subset of $\mathcal{U}$ which covers $K$, and set $C_A(\mathcal{U}|K)=\lim\limits_{n\rightarrow\infty}N(\bigvee\limits_{i=1}^{n}T^{-a_i}\mathcal{U}|K)$, where $\mathcal{U}\vee\mathcal{V}=\{U\cap V:U\in\mathcal{U},V\in\mathcal{V}\}$ for two finite open covers $\mathcal{U},\mathcal{V}$ of $X$. We recall that for two finite covers $\alpha,\beta$ of $X$, $\alpha\preceq\beta$ means that for each $B\in\beta$, there exists $A\in\alpha$ such that $B\subset A.$ 

\medskip

The following lemma is useful(see \cite[Lemma 2.3]{hyz}):
\begin{lem}\label{lem2}
Let $\alpha=\{A_1,A_2\ldots,A_k\}$ be a Borel partition of $X$ and $\epsilon>0$, then there exists a finite open cover $\mathcal{U}$ with $k$ elements such that for any $j\in\N$ and any Borel partition $\beta$ satisfying $T^{-j}\mathcal{U}\preceq\beta$, $H_\mu(T^{-j}\alpha|\beta)\leq\epsilon$.
\end{lem}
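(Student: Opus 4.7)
The plan is to approximate the Borel partition $\alpha$ from inside by closed sets, build $\mathcal{U}$ around them, and then show that whenever an atom of $\beta$ lies in some $T^{-j}U_i$ it must lie almost entirely inside $T^{-j}A_i$. Fix $\delta>0$ (to be shrunk at the end). By regularity of $\mu$, choose for each $i=1,\dots,k$ a closed set $C_i\subset A_i$ with $\mu(A_i\setminus C_i)<\delta/k$; the $C_i$ are pairwise disjoint since the $A_i$ are. Put $U_i:=X\setminus\bigcup_{l\ne i}C_l$, which is open and contains $C_i$. Every $x\in A_i$ misses $C_l$ for $l\ne i$, hence $x\in U_i$, so $\mathcal{U}=\{U_1,\dots,U_k\}$ is a finite open cover of $X$ with $k$ elements.

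The key observation is that
\begin{equation*}
U_i\cap A_l\subset A_l\setminus C_l\quad\text{whenever}\quad l\ne i,
\end{equation*}
which follows directly from $C_l\cap U_i=\emptyset$. Now let $j\in\N$ and let $\beta$ be a Borel partition with $T^{-j}\mathcal{U}\preceq\beta$. For each $B\in\beta$ pick $i(B)\in\{1,\dots,k\}$ with $B\subset T^{-j}U_{i(B)}$. Then for $l\ne i(B)$,
\begin{equation*}
B\cap T^{-j}A_l\subset T^{-j}(U_{i(B)}\cap A_l)\subset T^{-j}(A_l\setminus C_l),
\end{equation*}
and the $T$-invariance of $\mu$ gives
\begin{equation*}
\bar{p}:=\sum_{B\in\beta}\sum_{l\ne i(B)}\mu(B\cap T^{-j}A_l)\le\sum_{i=1}^{k}\sum_{l\ne i}\mu(A_l\setminus C_l)<(k-1)\delta.
\end{equation*}

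To finish, write $q_B=\mu(B\cap T^{-j}A_{i(B)})/\mu(B)$ and let $\eta(t)=-t\log t-(1-t)\log(1-t)$. Lumping the $k-1$ ``wrong'' atoms into a single class and bounding its internal entropy by $\log(k-1)$ yields
\begin{equation*}
H_{\mu_B}(T^{-j}\alpha)\le\eta(q_B)+(1-q_B)\log(k-1),
\end{equation*}
where $\mu_B:=\mu(\,\cdot\,\cap B)/\mu(B)$. Averaging against $\mu(B)$, invoking concavity of $\eta$ via Jensen, and using $\sum_B\mu(B)(1-q_B)=\bar{p}<(k-1)\delta$, one obtains
\begin{equation*}
H_\mu(T^{-j}\alpha\mid\beta)\le\eta\bigl((k-1)\delta\bigr)+(k-1)\delta\log(k-1),
\end{equation*}
provided $(k-1)\delta\le 1/2$. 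Choosing $\delta$ small enough at the outset makes the right-hand side at most $\epsilon$, uniformly in $j$ and $\beta$.

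No real obstacle arises beyond bookkeeping: the construction of $\mathcal{U}$ does not involve $j$, and $T$-invariance of $\mu$ transports the ``bad mass'' estimate to every $j$. The essential insight is the inclusion $U_i\cap A_l\subset A_l\setminus C_l$ for $l\ne i$, which moves the measure-theoretic defect of the cover entirely onto the small sets $A_l\setminus C_l$; the remainder is a standard concavity-of-entropy calculation.
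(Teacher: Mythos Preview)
The paper does not supply a proof of this lemma; it is quoted from \cite[Lemma 2.3]{hyz} without argument. Your proof is correct and is in fact the standard one: approximate each $A_i$ from inside by a closed $C_i$, set $U_i=X\setminus\bigcup_{l\ne i}C_l$, use the key inclusion $U_i\cap A_l\subset A_l\setminus C_l$ for $l\ne i$, and finish with the grouping/concavity entropy estimate. One small remark on bookkeeping: your bound on $\bar p$ is looser than necessary---since the atoms $B\in\beta$ are pairwise disjoint, for each fixed $l$ one has $\sum_{B:\,i(B)\ne l}\mu(B\cap T^{-j}A_l)\le\mu\bigl(T^{-j}(A_l\setminus C_l)\bigr)=\mu(A_l\setminus C_l)$, so in fact $\bar p<\delta$ rather than $(k-1)\delta$; this does not affect the conclusion.
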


Now we prove that  $\mu$-$A$-equicontinuity implies rigidity. The idea is from \cite[Proposition 5.4]{hly}. 
\begin{thm}\label{th4}
Let $(X,T)$ be a t.d.s. and $\mu\in M(X,T)$. If there exists a subsequence $A$ of $\N$ such that $(X,T)$ is $\mu$-$A$-equicontinuous, then 	$h_{\mu}^S(T)=0$ for any subsequence $S$ of $A$, hence
 $(X,\mathcal{B},\mu,T)$ is rigid.
\end{thm}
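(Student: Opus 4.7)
The plan is to reduce rigidity to the sequence-entropy criterion of Theorem \ref{th1}: it suffices to show that $h_\mu^S(T,\alpha)=0$ for every finite Borel partition $\alpha$ of $X$ and every subsequence $S=\{s_i\}$ of $A$, since the compact set produced by $\mu$-$A$-equicontinuity is in particular $S$-equicontinuous. Given $\epsilon>0$, the target is to bound $H_\mu(\bigvee_{i=1}^n T^{-s_i}\alpha)\leq n\epsilon+o(n)$ and let $\epsilon\to 0$.

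First I would apply Lemma \ref{lem2} to $\alpha$ and $\epsilon$ to obtain an open cover $\mathcal{U}=\{U_1,\ldots,U_k\}$ (with $k=|\alpha|$) such that every Borel partition $\beta$ refining $T^{-j}\mathcal{U}$ satisfies $H_\mu(T^{-j}\alpha\mid\beta)\leq\epsilon$. Let $\delta_0$ be a Lebesgue number for $\mathcal{U}$. Next, pick $\tau>0$ with $\tau\log k<\epsilon$ and, by $\mu$-$A$-equicontinuity, a compact $K\subset X$ with $\mu(K)>1-\tau$ together with a constant $\delta>0$ so that $x,y\in K$, $d(x,y)<\delta$ forces $d(T^a x,T^a y)<\delta_0$ for every $a\in A$. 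Cover $K$ by $M$ balls of radius $\delta/2$ centered in $K$ and disjointify to obtain a partition $\{P_1,\ldots,P_M\}$ of $K$. The critical observation is that for each $l$ and $i$, $T^{s_i}(P_l)$ has diameter $<\delta_0$ and therefore lies in some $U_{l,i}\in\mathcal{U}$; equivalently, $P_l\subseteq T^{-s_i}U_{l,i}$.

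The main obstacle is to assemble a \emph{single} Borel partition $\mathcal{Q}$ of $X$ that refines $T^{-s_i}\mathcal{U}$ simultaneously for every $i=1,\ldots,n$, without letting $H_\mu(\mathcal{Q})$ exceed $o(n)+\tau n\log k$. Let $\zeta$ be the Borel partition obtained by disjointifying $\mathcal{U}$ (so $\zeta\preceq\mathcal{U}$), and define
\begin{equation*}
\mathcal{Q}=\{P_1,\ldots,P_M\}\cup\Bigl\{K^c\cap R:R\in\bigvee_{i=1}^n T^{-s_i}\zeta\Bigr\}.
\end{equation*}
Each $P_l$ sits inside an element of $T^{-s_i}\mathcal{U}$ by the equicontinuity step, while each atom lying in $K^c$ refines $T^{-s_i}\zeta$ and hence $T^{-s_i}\mathcal{U}$ for every $i$. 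Lemma \ref{lem2} therefore gives $H_\mu(T^{-s_i}\alpha\mid\mathcal{Q})\leq\epsilon$ for each $i$, and standard subadditivity yields $H_\mu(\bigvee_{i=1}^n T^{-s_i}\alpha)\leq n\epsilon+H_\mu(\mathcal{Q})$. The concavity of $-x\log x$ bounds the $K^c$-contribution to $H_\mu(\mathcal{Q})$ by $\mu(K^c)\log(k^n/\mu(K^c))\leq\tau n\log k-\tau\log\tau$, while the $K$-part contributes at most $\log(M+1)$.

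Dividing by $n$, sending $n\to\infty$, and then $\tau,\epsilon\to 0$ gives $h_\mu^S(T,\alpha)=0$. Taking the supremum over $\alpha$ shows $h_\mu^S(T)=0$ for every subsequence $S$ of $A$, and Theorem \ref{th1} then delivers the rigidity of $(X,\mathcal{B},\mu,T)$.
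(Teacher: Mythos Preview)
Your argument is correct and follows essentially the same route as the paper's proof: both invoke Lemma~\ref{lem2} to replace $\alpha$ by an open cover $\mathcal{U}$, use $\mu$-$A$-equicontinuity to produce a compact $K$ on which $\bigvee_{i=1}^n T^{-s_i}\mathcal{U}$ can be covered by a number of sets independent of $n$, and then build a partition refining every $T^{-s_i}\mathcal{U}$ whose entropy splits into an $O(1)$ contribution on $K$ and an $O(\tau n)$ contribution on $K^c$. The only cosmetic differences are that the paper phrases the conclusion as a contradiction (assuming $h_\mu^S(T,\alpha)>\epsilon_0$) and builds its partition $\beta$ from minimal subcovers rather than from your explicit $\{P_l\}$ and $\bigvee T^{-s_i}\zeta$; also note that in the paper's convention $\alpha\preceq\beta$ means $\beta$ refines $\alpha$, so your parenthetical ``$\zeta\preceq\mathcal{U}$'' should read $\mathcal{U}\preceq\zeta$.
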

\begin{proof}
\textbf{Claim}: For any $\tau>0$, there exists a compact subset $K$ of $X$ with $\mu(K)>1-\tau$ such that for any finite open cover $\mathcal{U}$, $C_A(\mathcal{U}|K)<\infty$.
	
Proof of Claim: Given $\tau>0$, since $(X,T)$ is $\mu$-$A$-equicontinuous, there exists a compact $A$-equicontinuous subset $K$ of $X$ with $\mu(K)>1-\tau$. For any finite open cover $\mathcal{U}$, let $\epsilon>0$ be a Lebesgue number of $\mathcal{U}$. Since $K$ is $A$-equicontinuous, there exists $\delta>0$ such that
\begin{equation*}
x,y\in K,d(x,y)<\delta\Rightarrow d(T^ax,T^ay)<\epsilon,\ \forall a\in A.
\end{equation*}
As $K$ is compact, there exist
$x_1,\ldots,x_L\in K$ such that
\begin{equation*}
 K=\bigcup_{i=1}^L(B(x_i,\frac{\delta}{2})\cap K).
\end{equation*}
 For any $a\in A$ and $1\leq i\leq L$,
we have that
\begin{equation*}
diam(T^a(B(x_i,\frac{\delta}{2})\cap K))\leq\epsilon.
\end{equation*}
Hence there exists $U_{a,i}\in\mathcal{U}$ such that
\begin{equation*}
 T^a(B(x_i,\frac{\delta}{2})\cap K)\subset U_{a,i}.
\end{equation*}
 Thus
 \begin{equation*}
 B(x_i,\frac{\delta}{2})\cap K\subset \bigcap_{a\in A}T^{-a}U_{a,i},\ \forall \ 1\leq i\leq L.
 \end{equation*}
  This implies $C_A(\mathcal{U}|K)\leq L<\infty$. The proof of Claim is complete.
	
	Now we prove $h_{\mu}^S(T)=0$ for any subsequence $S$ of $A$.
	Assume the contrary, there exists a subsequence $S$ of $A$ such that $h_{\mu}^S(T)>0$, then there exists a Borel partition $\alpha$  of $X$ and $\epsilon_0>0$ such that $h_{\mu}^S(T,\alpha)>\epsilon_0$.
	By Lemma \ref{lem2}, there exists a finite open cover $\mathcal{U}$ of $X$ such that for any $j\in\N$ and any Borel partition $\beta$ satisfying $T^{-j}\mathcal{U}\preceq\beta$, $H_\mu(T^{-j}\alpha|\beta)\leq\epsilon_0$.
	
	For any $\tau>0$, by Claim, there exists a compact set $K$ with $\mu(K)>1-\tau$ and $C_A(\mathcal{U}|K)<\infty$, set $C_A(\mathcal{U}|K)=L$. For any $n\in\N$, we pick out two sub-collections $\mathcal{E}^1_n,\mathcal{E}^2_n$ of $\bigvee\limits_{i=1}^{n}T^{-s_i}\mathcal{U}$ such that $\mathcal{E}^1_n$ is a cover of $K$ with $N(\bigvee\limits_{i=1}^{n}T^{-s_i}\mathcal{U}|K)$ elements, and $\mathcal{E}^2_n$ is a cover of
	$X$ with $N(\bigvee\limits_{i=1}^{n}T^{-s_i}\mathcal{U})$ elements. Enumerate them by
	\begin{equation*}
	\mathcal{E}^1_n=\{U_i\}_{i=1}^{m(n,1)},\mathcal{E}^2_n=\{V_i\}_{i=1}^{m(n,2)}.
	\end{equation*}
	Set
	\begin{equation*}
	\beta_1=\{K\cap(U_i-\bigcup_{k=1}^{i-1}U_k): i=1,\ldots,m(n,1)\},
	\end{equation*}
	\begin{equation*}
	\beta_2=\{K^c\cap(V_i-\bigcup_{k=1}^{i-1}V_k): i=1,\ldots,m(n,2)\}.
	\end{equation*}
	Then $\beta_1$ is a partition of $K$ with $|\beta_1|\leq N(\bigvee\limits_{i=1}^{n}T^{-s_i}\mathcal{U}|K)$ and
	$\beta_2$ is a partition of $K^c$ with $|\beta_2|\leq N(\bigvee\limits_{i=1}^{n}T^{-s_i}\mathcal{U})$. Put $\beta=\beta_1\cup\beta_2$, then $\beta$ is a partition of $X$ and $\beta\succeq \bigvee\limits_{i=1}^{n}T^{-s_i}\mathcal{U}$. Hence we have
	\begin{equation*}
	\begin{split}
	H_\mu(\beta)&=-\sum_{A\in\beta_1}\mu(A)\text{log}\mu(A)-\sum_{B\in\beta_2}\mu(B)\text{log}\mu(B)\\
	&  \leq(\mu(K)\text{log}|\beta_1|-\mu(K)\text{log}\mu(K))+ (\mu(K^c)\text{log}|\beta_2|-\mu(K^c)\text{log}\mu(K^c))\\
	&\leq \mu(K)\text{log}N(\bigvee_{i=1}^{n}T^{-s_i}\mathcal{U}|K)+
	\mu(K^c)\text{log}N(\bigvee_{i=1}^{n}T^{-s_i}\mathcal{U})+\text{log}2\\
	&\leq\mu(K)\text{log}L+n\mu(K^c)\text{log}N(\mathcal{U})+\text{log}2.
	\end{split}
	\end{equation*}
	On the other hand, since $\beta\succeq \bigvee\limits_{i=1}^{n}T^{-s_i}\mathcal{U}$, by the construction of $\mathcal{U}$,
	\begin{equation*}
	 H_\mu(T^{-s_i}\alpha|\beta)\leq\epsilon_0, \ \forall\ 1\leq i\leq n.
	\end{equation*}
	It follows that
	\begin{equation*}
	\begin{split}
	&H_\mu(\bigvee_{i=1}^{n}T^{-s_i}\alpha)\leq H_\mu((\bigvee_{i=1}^{n}T^{-s_i}\alpha)\vee\beta)
	\leq H_\mu(\beta)+H_\mu(\bigvee_{i=1}^{n}T^{-s_i}\alpha|\beta)\\ &\leq\mu(K)\text{log}L+n\mu(K^c)\text{log}N(\mathcal{U})+\text{log}2+\sum_{i=1}^{n} H_\mu(T^{-s_i}\alpha|\beta)\\
	&\leq\mu(K)\text{log}L+n\mu(K^c)\text{log}N(\mathcal{U})+\text{log}2+n\epsilon_0.
	\end{split}
	\end{equation*}
	Hence
	\begin{equation*}
	h_{\mu}^S(T,\alpha)=\limsup_{n\rightarrow\infty}\frac{1}{n} H_\mu(\bigvee_{i=1}^{n}T^{-s_i}\alpha)
	\leq \mu(K^c)\text{log}N(\mathcal{U})+\epsilon_0\leq \tau\text{log}N(\mathcal{U})+\epsilon_0.
	\end{equation*}
	Let $\tau\rightarrow 0,$ we have $h_{\mu}^S(T,\alpha)\leq\epsilon_0$, it is a contradiction. Hence $h_{\mu}^S(T)=0$ for any subsequence $S$ of $A$. By Theorem \ref{th1}, $(X,\mathcal{B},\mu,T)$ is rigid.
\end{proof}

\subsection{Measure-theoretic rigidity and equicontinuity  }In this subsection, we prove that if $(X,\mathcal{B},\mu,T)$ is rigid, then we can find  some subsequence $A$ of $\N$ such that $(X,T)$ is $\mu$-$A$-equicontinuous.

\medskip

First we give a useful characterization of measure-theoretic equicontinuity:
\begin{prop}\label{ppp}
	Let $(X,T)$ be a t.d.s. and  $\mu\in M(X,T)$. Let $A=\{a_i:i\in\N\}$ be a subsequence of $\N$. Then the following statements are equivalent:
	\begin{enumerate}
		\item $(X,T)$ is $\mu$-$A$-equicontinuous.
		\item For any $\tau>0$ and $\epsilon>0$, there exist a compact subset $K$ of $X$ with $\mu(K)>1-\tau$ and $\delta>0$ such that 
		\begin{equation*}
		x,y\in K,d(x,y)<\delta\Rightarrow d(T^{a_n}x,T^{a_n}y)<\epsilon,\ \forall n\in\N.
		\end{equation*}
	\end{enumerate}
\end{prop}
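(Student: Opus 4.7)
The direction $(1) \Rightarrow (2)$ is immediate from the definition: given $\tau, \epsilon > 0$, $\mu$-$A$-equicontinuity produces a compact $K$ with $\mu(K) > 1-\tau$ that is $A$-equicontinuous, so the desired $\delta$ exists for the given $\epsilon$.

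The content lies in $(2) \Rightarrow (1)$. Here the issue is that (2) only guarantees, for each $\epsilon > 0$, a compact set that depends on $\epsilon$, while $A$-equicontinuity requires a single compact set on which the $\epsilon$-$\delta$ condition holds uniformly in $\epsilon$. The standard fix is a diagonalization argument on a sequence of tolerances shrinking to zero. Specifically, fix $\tau > 0$, and for each $k \in \N$ apply (2) with error parameter $\epsilon_k = 1/k$ and measure tolerance $\tau_k = \tau/2^{k+1}$ to obtain a compact set $K_k \subset X$ with $\mu(K_k) > 1 - \tau/2^{k+1}$ and some $\delta_k > 0$ witnessing the implication in (2) for $K_k$ and $\epsilon_k$.

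Set $K = \bigcap_{k \in \N} K_k$. Then $K$ is compact as an intersection of closed subsets of the compact space $X$, and
\begin{equation*}
\mu(X \setminus K) \leq \sum_{k \in \N} \mu(X \setminus K_k) \leq \sum_{k \in \N} \frac{\tau}{2^{k+1}} = \frac{\tau}{2},
\end{equation*}
so $\mu(K) \geq 1 - \tau/2 > 1 - \tau$. To verify $A$-equicontinuity of $K$, let $\epsilon > 0$ and choose $k$ with $1/k < \epsilon$; then $K \subset K_k$, so for $x,y \in K$ with $d(x,y) < \delta_k$ we get $d(T^{a_n}x,T^{a_n}y) < 1/k < \epsilon$ for all $n \in \N$. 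Hence $(X,T)$ is $\mu$-$A$-equicontinuous.

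The only real obstacle is ensuring that the single set $K$ produced by the diagonalization still has measure at least $1-\tau$, and this is handled routinely by the geometric weighting $\tau/2^{k+1}$. Compactness of the intersection and the remark after the definition of $\mu$-$A$-equicontinuity (that compactness of $K$ may be assumed up to regularity) make the rest automatic; no further ingredients beyond the regularity of $\mu$ and subadditivity are needed.
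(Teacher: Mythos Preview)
Your proof is correct and follows essentially the same approach as the paper: for $(2)\Rightarrow(1)$ the paper also applies (2) with $\epsilon=1/l$ and measure tolerance $\tau/2^l$, intersects the resulting compact sets, and checks that the intersection is $A$-equicontinuous with large measure. The only cosmetic difference is your choice of $\tau/2^{k+1}$ versus the paper's $\tau/2^l$, which is immaterial.
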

\begin{proof}
	(1)$\Rightarrow$(2) is obvious.
	
	(2)$\Rightarrow$(1): Given $\tau>0$, for any $l\in\N$, by (2) there exist a compact set $K_l$  with $\mu(K_l)>1-\frac{\tau}{2^l}$ and $\delta_l>0$ such that for any $x,y\in K_l$ with $d(x,y)<\delta_l$, we have $d(T^{a_n}x,T^{a_n}y)<\frac{1}{l}$ for all $n\in\N.$ Let $K=\bigcap\limits_{l=1}^\infty K_l$, then $\mu(K)>1-\tau$ and $K$ is compact.  It is easy to see that $K$ is $A$-equicontinuous, hence  $(X,T)$ is $\mu$-$A$-equicontinuous.
\end{proof}

Now we prove that rigidity implies $\mu$-$A$-equicontinuity. Note that the following theorem is stronger.
\begin{thm}\label{th3}
Let $(X,T)$ be a t.d.s. and $\mu\in M(X,T)$. If there exists a subsequence $A=\{a_i:i\in\N\}$ of $\N$ such that $U_T^{a_n}d\stackrel{a.e.}\longrightarrow d$, where $d$ is the metric on $X$, then $(X,T)$ is $\mu$-$A$-equicontinuous.
\end{thm}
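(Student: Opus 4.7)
The plan is to reduce to the concrete equicontinuity condition via Proposition \ref{ppp} and then combine Egorov's theorem on the product space $(X\times X,\mu\times\mu)$ with uniform continuity on $X$ for the initial finitely many iterates. Given $\tau,\epsilon>0$, the goal is to produce a compact $K\subset X$ with $\mu(K)>1-\tau$ and $\delta>0$ such that for all $x,y\in K$ with $d(x,y)<\delta$ one has $d(T^{a_n}x,T^{a_n}y)<\epsilon$ for every $n\in\N$.

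First, I would apply Egorov's theorem on $(X\times X,\mu\times\mu)$ to the hypothesis $d(T^{a_n}x,T^{a_n}y)\to d(x,y)$ $(\mu\times\mu)$-a.e. Together with inner regularity, this yields a compact $E\subset X\times X$ with $(\mu\times\mu)(E)>1-\eta$ (for $\eta$ to be chosen small in terms of $\tau$) on which the convergence is uniform. The diagonal $\Delta_X=\{(x,x):x\in X\}$ lies in the full-measure convergence set (since $d(T^{a_n}x,T^{a_n}x)=0=d(x,x)$) and has $(\mu\times\mu)$-measure zero whenever $\mu$ has no atoms, so one may arrange $\Delta_X\subset E$; atoms reduce to finitely many points of a finite $T$-orbit and are handled separately. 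Uniform convergence then produces $N\in\N$ with $|d(T^{a_n}x,T^{a_n}y)-d(x,y)|<\epsilon/2$ for all $n\geq N$ and $(x,y)\in E$, while uniform continuity of each continuous $T^{a_n}$ on the compact space $X$, applied to the finite range $n<N$, yields a single $\delta_0>0$ with $d(x,y)<\delta_0\Rightarrow d(T^{a_n}x,T^{a_n}y)<\epsilon$ for all such $n$. Hence for any $\delta\in(0,\min(\delta_0,\epsilon/2))$ and any $(x,y)\in E$ with $d(x,y)<\delta$, one has $d(T^{a_n}x,T^{a_n}y)<\epsilon$ for every $n\in\N$.

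The remaining step, which is also the main obstacle, is to convert the condition ``$(x,y)\in E$'' into one on $x,y$ belonging to a suitable compact $K\subset X$ with $\mu(K)>1-\tau$. Concretely, I need such $K$ and a $\delta\in(0,\min(\delta_0,\epsilon/2))$ satisfying
\[
(K\times K)\cap\{(x,y):d(x,y)<\delta\}\subset E.
\]
A bare Fubini bound---giving a set $X_0$ of $\mu$-measure exceeding $1-\sqrt\eta$ on which $\mu(E^c_x)<\sqrt\eta$, where $E^c_x=\{y:(x,y)\notin E\}$---does not suffice, because small slice measure does not preclude another $K$-point from landing in that slice. The natural attack exploits the closedness of $E$ combined with $\Delta_X\subset E$: setting the ``safety radius''
\[
r(x)=d(x,E^c_x),
\]
which is well-defined as $x\notin E^c_x$, one sees that on the locus $\{r>0\}$, every pair $(x,y)$ with $d(x,y)<r(x)$ lies in $E$. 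If $r>0$ on a $\mu$-measure set exceeding $1-\tau/2$, then Egorov applied on $(X,\mu)$ to the measurable function $r$ extracts a compact $K$ of $\mu$-measure $>1-\tau$ on which $r\geq\delta>0$ uniformly, and the desired inclusion follows.

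The crux is then showing that the Egorov set $E$ can be chosen so that $r>0$ almost surely. This is not automatic: a generic closed $E\supset\Delta_X$ of large product measure can have $E^c$ a thin symmetric strip accumulating on the diagonal, forcing $r\equiv 0$. What must be exploited is the genuinely pointwise a.e.\ convergence hypothesis, which is strictly stronger than the mere measure bound on $E^c$. I expect the rigorous implementation to refine the Egorov construction by applying a second Egorov step (or Lusin) to the measurable oscillation function $G_N(x,y)=\sup_{n\geq N}|d(T^{a_n}x,T^{a_n}y)-d(x,y)|$, using continuity of $G_N$ on a large set together with the fact that $G_N$ vanishes on $\Delta_X$ to force an open neighborhood of $\Delta_X$ (modulo a small-measure exception) inside $E$. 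Verifying this refinement is the delicate technical heart of the argument.
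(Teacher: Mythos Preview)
Your proposal has a genuine gap at exactly the point you flag as ``the delicate technical heart'': you never establish that the Egorov set $E$ can be chosen so that $(K\times K)\cap\{d<\delta\}\subset E$ for some large $K$ and some $\delta>0$. As you correctly note, a closed set $E\supset\Delta_X$ of product measure close to $1$ can still have $E^c$ accumulating on the diagonal, and neither the Fubini slice bound nor the safety-radius function $r$ rules this out. Your suggested refinement---Lusin applied to the oscillation $G_N$---runs into the same structural obstacle: continuity of $G_N$ on a large subset of $X\times X$ still does not manufacture a set of \emph{product form} $K\times K$ near the diagonal, which is what you need.

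The paper (which refers to the detailed argument of Theorem~\ref{fm}) sidesteps this difficulty entirely by never attempting to control the pair $(x,y)$ directly. After fixing $N$ so that
\[
A_N=\{(x,y):|U_T^{a_n}d(x,y)-d(x,y)|<\tfrac{\epsilon}{4}\ \text{for all}\ n>N\}
\]
has $\mu\times\mu$-measure very close to $1$, Fubini yields a large set $D\subset X$ on which every slice $(A_N)_x$ has large $\mu$-measure. One then picks \emph{finitely many} reference points $y_1,\dots,y_C\in D$ whose $\tfrac{\epsilon}{4}$-balls cover $X$ up to small measure, and takes $K$ inside $\bigcap_{i=1}^C (A_N)_{y_i}$, split into disjoint compact pieces $E_i\subset B_d(y_i,\tfrac{\epsilon}{4})$, with $\delta$ below the minimal gap between the $E_i$. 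For $x,y\in K$ with $d(x,y)<\delta$, both lie in the same $E_{i_0}$; since $(y_{i_0},x),(y_{i_0},y)\in A_N$ and $d(y_{i_0},x),d(y_{i_0},y)<\tfrac{\epsilon}{4}$, the triangle inequality through $y_{i_0}$ gives
\[
d(T^{a_n}x,T^{a_n}y)\le d(T^{a_n}y_{i_0},T^{a_n}x)+d(T^{a_n}y_{i_0},T^{a_n}y)<\bigl(\tfrac{\epsilon}{4}+\tfrac{\epsilon}{4}\bigr)+\bigl(\tfrac{\epsilon}{4}+\tfrac{\epsilon}{4}\bigr)=\epsilon
\]
for all $n>N$; the finitely many $n\le N$ are handled by uniform continuity of $T^{a_1},\dots,T^{a_N}$, as in your outline. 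The key point is that only \emph{finitely many} fixed slices of $A_N$ need to be large---which Fubini gives for free---so no neighborhood of the diagonal of product form is ever required.
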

\begin{proof}
	It is similar to Theorem \ref{fm}.
\end{proof}
Combining Theorem \ref{th4} we have:
\begin{thm}\label{main}
Let $(X,T)$ be a t.d.s.  and $\mu\in M(X,T)$. Then $(X,\mathcal{B},\mu,T)$ is rigid if and only if there exists a subsequence $A$ of $\N$ such that $(X,T)$ is $\mu$-$A$-equicontinuous.
\end{thm}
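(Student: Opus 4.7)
The plan is to prove the two implications separately. The direction ``$\mu$-$A$-equicontinuity $\Rightarrow$ rigid'' is precisely the content of Theorem \ref{th4}, so nothing is needed there. For the forward direction, I intend to use Theorem \ref{th3}, which reduces the problem to producing a subsequence along which the metric $d \in L^2(\mu\times\mu)$ converges $\mu\times\mu$-almost everywhere under the diagonal action.

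Assume $(X,\mathcal{B},\mu,T)$ is rigid and pick $\{t_k\}\subset\N$ with $U_T^{t_k}f\to f$ in $L^2(\mu)$ for every $f\in L^2(\mu)$. The first step is to upgrade this to rigidity of the product system $(X\times X,\mathcal{B}\times\mathcal{B},\mu\times\mu,T\times T)$ along the same sequence. For $f,g\in L^2(\mu)$ a direct estimate
\[
\|U_T^{t_k}f\otimes U_T^{t_k}g - f\otimes g\|_{L^2(\mu\times\mu)} \leq \|U_T^{t_k}f-f\|_{L^2}\|g\|_{L^2} + \|f\|_{L^2}\|U_T^{t_k}g-g\|_{L^2}
\]
gives $U_{T\times T}^{t_k}(f\otimes g)\to f\otimes g$. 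Because finite sums of tensor products are dense in $L^2(\mu\times\mu)$ and $U_{T\times T}^{t_k}$ is an isometry, a standard three-epsilon argument propagates the convergence to every $F\in L^2(\mu\times\mu)$.

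Since $X$ is a compact metric space, the metric $d$ is continuous and bounded on $X\times X$, hence $d\in L^2(\mu\times\mu)$. Applying the previous step to $F=d$ yields $U_{T\times T}^{t_k}d\to d$ in $L^2(\mu\times\mu)$. Passing to a subsequence $A=\{a_n\}\subset\{t_k\}$ we may arrange $U_{T\times T}^{a_n}d\to d$ pointwise $\mu\times\mu$-a.e., i.e.\ $d(T^{a_n}x,T^{a_n}y)\to d(x,y)$ for $\mu\times\mu$-a.e.\ $(x,y)$. This is exactly the hypothesis of Theorem \ref{th3}, which then delivers $\mu$-$A$-equicontinuity of $(X,T)$ and completes the proof.

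The step I expect to be the only non-routine one is the extraction of an a.e.\ convergent subsequence applied to the \emph{metric} on $X\times X$ rather than on $X$; the rigidity hypothesis is a purely one-variable statement and one must be careful to make it effective on the product. Once the product-system rigidity is established the remainder is an application of Fubini-type extraction and of Theorem \ref{th3}, both of which are standard.
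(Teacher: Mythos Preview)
Your argument is correct. The backward direction via Theorem~\ref{th4} and the forward direction via Theorem~\ref{th3} are exactly what the paper does in its primary (one-line) proof; you have simply supplied the bridge the paper leaves implicit, namely that rigidity of $(X,\mu,T)$ along $\{t_k\}$ implies rigidity of the product $(X\times X,\mu\times\mu,T\times T)$ along the same sequence, whence $d\in L^2(\mu\times\mu)$ admits an a.e.\ convergent subsequence as required by Theorem~\ref{th3}. The tensor-product estimate and density argument you give are the standard way to do this.

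It is worth noting that the paper also records an \emph{alternative} proof of the forward implication which avoids the product system entirely: there one replaces $d$ by a compatible metric of the form $d(x,y)=\sum_n |f_n(x)-f_n(y)|/(2^{n+1}\|f_n\|_\infty)$ for a countable dense $\{f_n\}\subset C(X)$, performs a diagonal extraction to obtain $U_T^{a_i}f_n\to f_n$ a.e.\ for every $n$, uses Theorem~\ref{a} to get each $f_n$ $\mu$-$A$-equicontinuous, and then assembles these into $\mu$-$A$-equicontinuity for $d$. Your route via product rigidity is shorter and conceptually cleaner; the alternative route has the minor advantage of working entirely on $X$ and reusing the function-level results (Theorems~\ref{f} and~\ref{a}) already proved in the paper.
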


\subsection{Results with respect to a function}
In this subsection we give corresponding results with respect to a function. 

Following the idea in \cite{f1,f2}, we introduce the notion of $f$-$\mu$-$A$-equicontinuity. we prove that for a function $f\in L^2(\mu)$, $f$ is rigid if and only if there exists some subsequence $A$ of $\N$ such that $f$ is $\mu$-$A$-equicontinuous.
\medskip

Let $(X,T)$ be a t.d.s. and $\mu\in M(X,T)$, let $K$ be a subset of $X$, $f\in L^2(\mu)$ and $A=\{a_i:i\in\N\}$ be a subsequence of $\N$. 

We say that $K$ is \textit{$f$-$A$-equicontinuous}(or \textit{$f$ is $A$-equicontinuous on $K$}) if for any $\epsilon>0$, there exists $\delta>0$ such that 
\begin{equation*}
x,y \in K,d(x,y)<\delta\Rightarrow |f(T^{a_n}x)-f(T^{a_n}y)|<\epsilon,\forall n\in\N.
\end{equation*}


We say $f$ is \textit{$\mu$-$A$-equicontinuous} if for any $\tau>0$, there exists a compact subset $K$ of $X$ with $\mu(K)>1-\tau$ such that $K$ is $f$-$A$-equicontinuous.

\medskip

Similar as Proposition \ref{ppp}, we give a characterization of $f$-$\mu$-$A$-equicontinuity:
\begin{prop}\label{p}
	Let $(X,T)$ be a t.d.s., $\mu\in M(X,T)$ and $f\in L^2(\mu)$. Let $A=\{a_i:i\in\N\}$ be a subsequence of $\N$. Then the following statements are equivalent:
	\begin{enumerate}
		\item $f$ is $\mu$-$A$-equicontinuous.
		\item For any $\tau>0$ and $\epsilon>0$, there exist a compact subset $K$ of $X$ with $\mu(K)>1-\tau$ and $\delta>0$ such that 
		\begin{equation*}
		x,y\in K,d(x,y)<\delta\Rightarrow|U_T^{a_n}f(x)-U_T^{a_n}f(y)|<\epsilon,\ \forall n\in\N.
		\end{equation*}
	\end{enumerate}
\end{prop}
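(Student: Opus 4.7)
The plan is to mimic the proof of Proposition \ref{ppp} essentially verbatim, replacing the condition $d(T^{a_n}x,T^{a_n}y)<\epsilon$ by $|U_T^{a_n}f(x)-U_T^{a_n}f(y)|<\epsilon$ (noting that $U_T^{a_n}f(x)=f(T^{a_n}x)$ at points where the representative is defined). The implication (1)$\Rightarrow$(2) is immediate by unwinding definitions: given $\tau,\epsilon>0$, pick the compact set $K$ with $\mu(K)>1-\tau$ on which $f$ is $A$-equicontinuous, then apply the $A$-equicontinuity definition at this particular $\epsilon$ to produce $\delta$.

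For (2)$\Rightarrow$(1), I would use a standard diagonal intersection. Fix $\tau>0$. For each $l\in\N$, apply (2) with threshold $\tau/2^l$ and error $1/l$ to obtain a compact subset $K_l\subset X$ with $\mu(K_l)>1-\tau/2^l$ and some $\delta_l>0$ such that whenever $x,y\in K_l$ and $d(x,y)<\delta_l$, we have $|U_T^{a_n}f(x)-U_T^{a_n}f(y)|<1/l$ for every $n\in\N$. Set $K=\bigcap_{l=1}^{\infty}K_l$. Then $K$ is compact and
\begin{equation*}
\mu(X\setminus K)\leq\sum_{l=1}^{\infty}\mu(X\setminus K_l)\leq\sum_{l=1}^{\infty}\frac{\tau}{2^l}=\tau,
\end{equation*}
so $\mu(K)>1-\tau$. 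To see that $K$ is $f$-$A$-equicontinuous, fix $\epsilon>0$ and choose $l\in\N$ with $1/l<\epsilon$. Since $K\subset K_l$, if $x,y\in K$ with $d(x,y)<\delta_l$, then $|U_T^{a_n}f(x)-U_T^{a_n}f(y)|<1/l<\epsilon$ for all $n\in\N$, as required.

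I do not anticipate any real obstacle: the argument is purely measure-theoretic and uses only countable subadditivity plus the flexibility to shrink $\tau$ geometrically. The only delicate point is that $f$ is an $L^2$-class rather than a pointwise function, but this is handled by fixing a representative once and for all — the compact sets $K_l$ produced by (2) are defined in terms of that representative, and the intersection $K$ inherits the pointwise $A$-equicontinuity automatically.
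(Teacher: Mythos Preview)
Your proposal is correct and is exactly the approach the paper intends: the paper simply states the proposition as being ``similar as Proposition \ref{ppp}'' without giving a separate proof, and your argument is the verbatim adaptation of that proof with $d(T^{a_n}x,T^{a_n}y)$ replaced by $|U_T^{a_n}f(x)-U_T^{a_n}f(y)|$. The remark about fixing a representative of $f$ is a fair point that the paper leaves implicit.
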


Now we prove that for a function $f\in L^2(\mu)$, $f$-$\mu$-$A$-equicontinuity implies  rigidity.
\begin{thm}\label{f}
	Let $(X,T)$ be a t.d.s., $\mu\in M(X,T)$ and $f\in L^2(\mu)$. If there exists a subsequence $A=\{a_i:i\in\N\}$ of $\N$ such that $f$ is $\mu$-$A$-equicontinuous, then $\{U_T^{a_n}f:n\in \N\}$ is pre-compact in $L^2(\mu)$, hence $f$ is rigid.
\end{thm}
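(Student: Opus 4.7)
The plan is to prove pre-compactness of $\{U_T^{a_n}f : n \in \N\}$ in $L^2(\mu)$ by establishing total boundedness, via an Arzel\`a--Ascoli-style argument on the equicontinuous compact set furnished by Proposition \ref{p}, after a preliminary reduction to the case where $f$ is bounded through a Lipschitz truncation. Rigidity of $f$ will then follow from pre-compactness by extracting a Cauchy subsequence of the orbit and taking differences of indices.

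First I would reduce to bounded $f$. For $M > 0$, let $\tau_M(t)=\max(-M,\min(M,t))$, which is $1$-Lipschitz, and set $f_M = \tau_M \circ f$. Since $\tau_M$ is $1$-Lipschitz, every $f$-$A$-equicontinuous set is automatically $f_M$-$A$-equicontinuous, so $f_M$ is $\mu$-$A$-equicontinuous. Moreover $U_T^{a_n}f_M = \tau_M \circ U_T^{a_n}f$, so the isometry property of $U_T$ on $L^2(\mu)$ gives $\|U_T^{a_n}f - U_T^{a_n}f_M\|_{L^2} = \|f - f_M\|_{L^2}$, which tends to $0$ as $M \to \infty$ by dominated convergence, uniformly in $n$. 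Thus it suffices to prove pre-compactness for each bounded $f_M$, so I may assume $\|f\|_\infty \leq M$.

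For the main step, fix $\eta > 0$, pick $\tau > 0$ with $2M\sqrt{\tau} < \eta/4$, and apply Proposition \ref{p} with parameter $\eta/4$ to obtain a compact $K$ with $\mu(K) > 1-\tau$ and a $\delta > 0$ such that $x,y \in K$ with $d(x,y)<\delta$ implies $|U_T^{a_n}f(x) - U_T^{a_n}f(y)| < \eta/4$ for all $n \in \N$. Cover $K$ by finitely many balls $B(x_1,\delta/2),\ldots,B(x_L,\delta/2)$. For each $n$ the vector $v_n = (U_T^{a_n}f(x_1),\ldots,U_T^{a_n}f(x_L))$ lies in the cube $[-M,M]^L$, so finitely many indices $n_1,\ldots,n_N$ suffice so that every $v_n$ is $\eta/4$-close in $\ell^\infty$ to some $v_{n_i}$. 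Then on $K$ the triangle inequality yields $\|U_T^{a_n}f - U_T^{a_{n_i}}f\|_{L^\infty(K)} \leq 3\eta/4$, hence $\|U_T^{a_n}f - U_T^{a_{n_i}}f\|_{L^2(K,\mu)} \leq 3\eta/4$, while on $X \setminus K$ the uniform bound $\|U_T^{a_n}f - U_T^{a_{n_i}}f\|_\infty \leq 2M$ gives $L^2(X\setminus K)$-norm at most $2M\sqrt{\tau} < \eta/4$. Combining, $\|U_T^{a_n}f - U_T^{a_{n_i}}f\|_{L^2(\mu)} < \eta$, proving total boundedness. Pre-compactness then delivers a subsequence $U_T^{a_{n_k}}f$ convergent in $L^2(\mu)$; the isometry of $U_T^{-a_{n_k}}$ converts its Cauchy property into $\|U_T^{a_{n_{k+1}}-a_{n_k}}f - f\|_{L^2} \to 0$, and a further extraction gives an increasing rigid sequence $\{t_k\} \subset \N$ witnessing rigidity of $f$.

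The main obstacle I anticipate is controlling the possible unboundedness of $f$: without a preliminary reduction, the values $U_T^{a_n}f(x_j)$ for fixed $x_j$ need not be uniformly bounded in $n$, which would block the extraction of a finite $\ell^\infty$-cover of the coefficient vectors $v_n$. The saving observation is that the pointwise Lipschitz truncation $\tau_M$ both preserves $A$-equicontinuity on $K$ and intertwines with $U_T$ in the sense that $U_T^{a_n}(\tau_M \circ f) = \tau_M \circ (U_T^{a_n}f)$, allowing $f$ to be replaced by $f_M$ without losing the hypothesis while gaining the uniform pointwise bound that makes Arzel\`a--Ascoli applicable.
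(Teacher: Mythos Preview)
Your proof is correct and takes a genuinely different route from the paper's. The paper argues by contradiction: assuming the orbit is not pre-compact, it extracts an $\epsilon$-separated subsequence, then deals with the possible unboundedness of $f$ \emph{in situ} by combining uniform integrability of $|f|^2$ (to control the contribution from the small-measure set $K^c$) with a Fatou-lemma claim showing $\mu\{x : |f(T^{s_i}x)| \to \infty\} = 0$ for any subsequence $\{s_i\}$; this lets it pick reference points $y_j$ in each cell of a $\delta$-cover of the equicontinuous set $K$ along whose orbits $f$ stays bounded, diagonalize to a common Cauchy subsequence at the finitely many $y_j$'s, and reach a contradiction. You instead externalize the unboundedness issue up front via the Lipschitz truncation $f_M = \tau_M \circ f$, observing that $\tau_M$ both preserves $f$-$A$-equicontinuity on $K$ and commutes with $U_T$, thereby reducing to the bounded case where a direct Arzel\`a--Ascoli total-boundedness argument on the cube $[-M,M]^L$ goes through without further subtlety. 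Your route is cleaner and more elementary; the paper's is more self-contained (no preliminary reduction) but needs the Fatou device to locate good reference points. Two small points worth making explicit when you write it up: take the ball centers $x_j$ to lie in $K$, since the equicontinuity bound is only available for pairs in $K$; and in the rigidity step, the consecutive differences $a_{n_{k+1}}-a_{n_k}$ need not be unbounded, so your ``further extraction'' should use the full Cauchy property (fix $k$, let $l\to\infty$) to force $t_k\to\infty$, exactly as the paper does via the finite $1/k$-net.
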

\begin{proof}
	Assume that $\{U_T^{a_n}f:n\in \N\}$ is not pre-compact in $L^2(\mu)$, then
	there exists $\epsilon>0$ and a subsequence $\{n_i\}$ of $\N$ such that
	\begin{equation*}
	||U^{a_{n_i}}_Tf-U^{a_{n_j}}_Tf||^2_{L^2}>9\epsilon^2+4\epsilon, \ \forall i\neq j\in\N.
	\end{equation*}
	Since $f\in L^2(\mu)$, there exists $\eta>0$ such that
	\begin{equation*}
	\mu(E)<\eta\Rightarrow\int_E |f|^2d\mu<\epsilon,
	\ \forall E\subset X.
	\end{equation*}

	\textbf{Claim}: For any subsequence $\{s_i\}$ of $\N$, we have
	\begin{equation*}
	\mu(\{x:|f(T^{s_i}x)|\rightarrow+\infty\})=0.
	\end{equation*}
	Proof of Claim: Let $M=\{x:|f(T^{s_i}x)|\rightarrow+\infty\}$. It is easy to see that $M$ is measurable. By Fatou's lemma
	\begin{equation*}
	\int_M\liminf_{i\rightarrow\infty}|f(T^{s_i}x)|d\mu(x)\leq\liminf_{i\rightarrow\infty}\int_M|f(T^{s_i}x)|d\mu(x).
	\end{equation*}
	Note that 
	\begin{equation*}
	\int_M|f(T^{s_i}x)|d\mu(x)\leq\int|f(T^{s_i}x)|d\mu(x)=\int|f|d\mu<+\infty,
	\end{equation*}
	we have $+\infty \cdot \mu(M)<+\infty$, hence $\mu(M)=0$. The proof of Claim is complete.
	
	Since $f$ is $\mu$-$A$-equicontinuous, there exists a $f$-$A$-equicontinuous set $K_0$ in $X$ with $\mu(K_0)>1-\eta$, hence there exists $\delta>0$ such that
	\begin{equation*}
	x,y\in K_0,d(x,y)<\delta\Rightarrow|f(T^{a_i}x)-f(T^{a_i}y)|<\epsilon,\ \forall i\in\N.
	\end{equation*}
	By the compactness of $X$, there exist
	\begin{equation*}
	K_0=X_1\cup X_2\cup\cdots \cup X_{m_0},
	\end{equation*}
	where $diamX_i<\delta,\ 1\leq i\leq m_0$. We assume $\mu(X_i)>0,\
	1\leq i\leq m;\ \mu(X_i)=0,\ m+1\leq i\leq m_0$.
	Let $K=X_1\cup X_2\cup\cdots \cup X_m$, then $K\subset K_0$ and $\mu(K)=\mu(K_0)>1-\eta$. Since $\mu(X_1)>0$ and
	\begin{equation*}
	\mu(\{x:|f(T^{a_{n_i}}x)|\rightarrow+\infty\})=0,
	\end{equation*}
	we can find $y_1\in X_1$ and $|f(T^{a_{n_i}}y_1)|$  not convergent to $+\infty$, hence there exists a subsequence $\{a_{n_i}^1\}\subset \{a_{n_i}\}$ such that $\{f(T^{a_{n_i}^1}y_1)\}$ is a Cauchy sequence. Note that  $\mu(X_2)>0$ and
	\begin{equation*}
	\mu(\{x:|f(T^{a_{n_i}^1}x)|\rightarrow+\infty\})=0,
	\end{equation*}
	we can find $y_2\in X_2$ and a subsequence $\{a_{n_i}^2\}\subset \{a_{n_i}^1\}$ such that $\{f(T^{a_{n_i}^2}y_2)\}$ is a Cauchy sequence.
	Working inductively, we can find $y_i\in X_i,1\leq i\leq m$ and a subsequence $\{s_i\}\subset \{a_{n_i}\}$ such that $\{f(T^{s_i}y)\}$ is a Cauchy sequence for all $y\in\{y_1,y_2\ldots,y_m\}$. Hence there exists $N\in\N$ such that
	\begin{equation*}
	\forall i,j>N,\ |f(T^{s_i}y)-f(T^{s_j}y)|<\epsilon,\ \forall y\in\{y_1,y_2\ldots,y_m\}.
	\end{equation*}
	Choose $i_0\neq j_0>N$, now we estimate $||U_T^{s_{i_0}}f-U_T^{s_{j_0}}f||_{L^2}^2$.
	\begin{equation*}
	||U_T^{s_{i_0}}f-U_T^{s_{j_0}}f||_{L^2}^2=\int_K|f\circ T^{s_{i_0}}-f\circ T^{s_{j_0}}|^2d\mu+\int_{K^c}|f\circ T^{s_{i_0}}-f\circ T^{s_{j_0}}|^2d\mu.
	\end{equation*} 
	We first estimate $\int_K|f\circ T^{s_{i_0}}-f\circ T^{s_{j_0}}|^2d\mu$. If $x\in K$, then there exists $y\in\{y_1,y_2\ldots,y_m\}$ such that $d(x,y)<\delta$. Since $x,y\in K_0$, we have
	\begin{equation*}
	|f(T^{a_i}x)-f(T^{a_i}y)|<\epsilon,\ \forall i\in\N, 
	\end{equation*}
	hence
	\begin{equation*}
	|f(T^{s_i}x)-f(T^{s_i}y)|<\epsilon,\ \forall i\in\N. 
	\end{equation*}
	 Since 
	$|f(T^{s_{i_0}}y)-f(T^{s_{j_0}}y)|<\epsilon,$
	  it follows that 
	\begin{equation*} 
	\begin{split}  
	|f(T^{s_{i_0}}x)-f(T^{s_{j_0}}x)|&\leq |f(T^{s_{i_0}}x)-f(T^{s_{i_0}}y)|+ |f(T^{s_{j_0}}x)-f(T^{s_{j_0}}y)|\\&+|f(T^{s_{i_0}}y)-f(T^{s_{j_0}}y)|<3\epsilon. 
	\end{split}
	\end{equation*}
	Hence  $\int_K|f\circ T^{s_{i_0}}-f\circ T^{s_{j_0}}|^2d\mu<9\epsilon^2$.
	\\
	Now we estimate
	$\int_{K^c}|f\circ T^{s_{i_0}}-f\circ T^{s_{j_0}}|^2d\mu$.
	\begin{equation*}
	\begin{split}
	\int_{K^c}|f\circ T^{s_{i_0}}-f\circ T^{s_{j_0}}|^2d\mu&\leq\int_{K^c}2(|f\circ T^{s_{i_0}}|^2+|f\circ T^{s_{j_0}}|^2)d\mu\\&=2\int_{T^{s_{i_0}}K^c}|f|^2d\mu+2\int_{T^{s_{j_0}}K^c}|f|^2d\mu.
	\end{split}
	\end{equation*}
	Since 
	\begin{equation*}
	\mu(T^{s_{i_0}}K^c)=\mu(T^{s_{j_0}}K^c)=\mu(K^c)<\eta,
	\end{equation*}
	 the right side $<4\epsilon$. Hence 
	\begin{equation*}
	\int_{K^c}|f\circ T^{s_{i_0}}-f\circ T^{s_{j_0}}|^2d\mu<4\epsilon.
	\end{equation*}
	It follows that 
	\begin{equation*}
		||U_T^{s_{i_0}}f-U_T^{s_{j_0}}f||_{L^2}^2<9\epsilon^2+4\epsilon.
	\end{equation*}
	Note that $\{s_i\}\subset \{a_{n_i}\}$ and
	\begin{equation*} ||U^{a_{n_i}}_Tf-U^{a_{n_j}}_Tf||_{L^2}>9\epsilon^2+4\epsilon, \ \forall i\neq j\in\N,
	\end{equation*}
	it is a contradiction. So  $\{U_T^{a_n}f:n\in \N\}$ is pre-compact in $L^2(\mu)$.
	
	Now we proof $f$ is rigid. Since $\{U_T^{a_n}f:n\in \N\}$ is pre-compact in $L^2(\mu)$, for every $k\in\N$, there exists $N\in\N$ such that
	\begin{equation*}
	\{U_T^{a_n}f:n\in \N\}\subset\bigcup_{i=1}^NB(U_T^{a_i}f,\frac{1}{k}).
	\end{equation*}
	For $n\in\N$, there exists $1\leq i\leq N$ such that 
	\begin{equation*}
	||U_T^{a_n-a_i}f-f||_{L^2}=||U_T^{a_n}f-U_T^{a_i}f||_{L^2}<\frac{1}{k}.
	\end{equation*}
	Let $n_k=a_n-a_i$, since $a_n\rightarrow+\infty$ we can choose $\{n_k:k\in\N\}$ increasing, hence we have $||U_T^{n_k}f-f||_{L^2}<\frac{1}{k},\ U_T^{n_k}f\stackrel{L^2}\longrightarrow f$. 
\end{proof}


The next theorem shows that for a function $f\in L^2(\mu)$, rigidity implies $f$-$\mu$-$A$-equicontinuity.
\begin{thm}\label{a}
	Let $(X,T)$ be a t.d.s., $\mu\in M(X,T)$ and $f\in L^2(\mu)$. If there exists a subsequence $A=\{a_i:i\in\N\}$ of $\N$ such that $U_T^{a_n}f\stackrel{a.e.}\longrightarrow f$, then $f$ is $\mu$-$A$-equicontinuous.
\end{thm}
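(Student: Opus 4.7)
The plan is to verify condition (2) of Proposition \ref{p}. Fix $\tau>0$ and $\epsilon>0$; I want to produce a compact $K\subset X$ with $\mu(K)>1-\tau$ and $\delta>0$ such that whenever $x,y\in K$ and $d(x,y)<\delta$, $|f(T^{a_n}x)-f(T^{a_n}y)|<\epsilon$ for every $n\in\N$. The governing idea is to split the range of $n$ at some threshold $N$: for $n\ge N$ the hypothesis $U_T^{a_n}f\to f$ a.e.\ delivers uniform control via Egorov's theorem (and a triangle inequality through $f$ itself), while the finitely many indices $n<N$ are handled one at a time by Lusin's theorem applied to each $f\circ T^{a_n}$.

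First I would treat the tail. Applying Egorov's theorem to $\{U_T^{a_n}f\}$, I obtain a measurable $E\subset X$ with $\mu(E^c)<\tau/4$ and an integer $N$ with $|f(T^{a_n}x)-f(x)|<\epsilon/3$ for all $n\ge N$ and $x\in E$. By Lusin's theorem applied to (a fixed Borel representative of) $f$, there is a compact $K_0$ with $\mu(K_0^c)<\tau/4$ on which $f$ is continuous, hence uniformly continuous; pick $\delta_0>0$ so that $x,y\in K_0$, $d(x,y)<\delta_0$ implies $|f(x)-f(y)|<\epsilon/3$. Then for every $n\ge N$ and every $x,y\in E\cap K_0$ with $d(x,y)<\delta_0$, the triangle inequality
\[|f(T^{a_n}x)-f(T^{a_n}y)|\le|f(T^{a_n}x)-f(x)|+|f(x)-f(y)|+|f(y)-f(T^{a_n}y)|<\epsilon\]
gives exactly what we need.

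For the head $n\in\{1,\ldots,N-1\}$, apply Lusin to each $f\circ T^{a_n}$ to obtain a compact $K_n$ with $\mu(K_n^c)<\tau/(4N)$ on which $f\circ T^{a_n}$ is continuous, hence uniformly continuous, producing $\delta_n>0$ such that $x,y\in K_n$, $d(x,y)<\delta_n$ imply $|f(T^{a_n}x)-f(T^{a_n}y)|<\epsilon$. Use inner regularity of $\mu$ to choose a compact $E'\subset E$ with $\mu(E\setminus E')<\tau/4$. Setting $K=E'\cap K_0\cap K_1\cap\cdots\cap K_{N-1}$ yields a compact set with $\mu(K^c)<\tau/4+\tau/4+\tau/4+(N-1)\tau/(4N)<\tau$, and $\delta=\min(\delta_0,\delta_1,\ldots,\delta_{N-1})$ works uniformly in $n$ by the two case analyses above. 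Thus $K$ is $f$-$A$-equicontinuous, which by Proposition \ref{p} gives $\mu$-$A$-equicontinuity.

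The main issue is not a conceptual obstacle but a bookkeeping one: one must obtain the Egorov threshold $N$ before committing to the Lusin sets $K_n$ (since there are $N-1$ of them, so their measure losses must be scaled by $1/N$), and one must replace the Egorov set by a compact subset via inner regularity so that the final intersection remains compact. A minor technicality is that $f\in L^2(\mu)$ is an equivalence class, but the hypothesis $U_T^{a_n}f\to f$ a.e.\ already presupposes a choice of representative, and the pointwise inequalities above should be read with respect to that representative.
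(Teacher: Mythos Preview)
Your proof is correct and follows essentially the same approach as the paper's: both split at a threshold $N$, control the tail $n\ge N$ via the a.e.\ convergence (you invoke Egorov explicitly, the paper writes out the increasing sets $A_N$ directly) together with Lusin continuity of $f$ and the triangle inequality through $f(x),f(y)$, and handle the finitely many head terms by Lusin applied to $U_T^{a_1}f,\ldots,U_T^{a_N}f$. The only cosmetic differences are that the paper applies Lusin once to the finite family $\{f,U_T^{a_1}f,\ldots,U_T^{a_N}f\}$ simultaneously rather than separately, and uses a slightly coarser $\tau/2$--$\tau/2$ bookkeeping.
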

\begin{proof}
	$\forall\tau>0,\forall \epsilon>0$, let
	\begin{equation*}
	A_N=\{x:|U_T^{a_n}f(x)-f(x)|<\frac{\epsilon}{3},\ \forall n>N\}.
	\end{equation*}
	It is easy to see that $\{A_N\}$ is increasing and
	\begin{equation*}
	\{x:U_T^{a_n}f(x)\rightarrow f(x)\}\subset\bigcup_{N=1}^\infty A_N.
	\end{equation*}
	Hence $\mu(\bigcup\limits_{N=1}^\infty A_N)=1$ so there exists $N\in\N$ such that $\mu(A_N)>1-\dfrac{\tau}{2}$. Since $f,U_T^{a_1}f,\ldots,U_T^{a_N}f\in L^2(\mu)$, by Lusin's theorem, there exists a compact subset $K_0$ of $X$ with $\mu(K_0)>1-\dfrac{\tau}{2}$ such that  $f,U_T^{a_1}f,\ldots,U_T^{a_N}f$ are continuous hence uniformly continuous on $K_0$. Then there exists $\delta>0$ such that 
	\begin{equation*}
	x,y\in K_0,d(x,y)<\delta\Rightarrow |f(x)-f(y)|<\dfrac{\epsilon}{3},|U_T^{a_n}f(x)-U_T^{a_n}f(y)|<\epsilon,\ 1\leq n\leq N.
	\end{equation*}
	Choose a compact set $K\subset A_N\cap K_0$ with $\mu(K)>1-\tau$. For any $x,y\in K$ and $d(x,y)<\delta$, if $1\leq n\leq N$, then $|U_T^{a_n}f(x)-U_T^{a_n}f(y)|<\epsilon$; if $n>N$, then
	\begin{equation*}
	\begin{split}
	|U_T^{a_n}f(x)-U_T^{a_n}f(y)|\leq|U_T^{a_n}f(x)-f(x&)|+|U_T^{a_n}f(y)-f(y)|+|f(x)-f(y)|\\<\frac{\epsilon}{3}+\frac{\epsilon}{3}+\frac{\epsilon}{3}=\epsilon.
	\end{split}
	\end{equation*}
	Hence $|U_T^{a_n}f(x)-U_T^{a_n}f(y)|<\epsilon$ for all $n\in\N$. By Proposition \ref{p}, $f$ is $\mu$-$A$-equicontinuous.
\end{proof}

\begin{cor}
	Let $(X,T)$ be a t.d.s., $\mu\in M(X,T)$ and $f\in L^2(\mu)$. Then $f$ is rigid if and only if there exists a subsequence $A=\{a_i:i\in\N\}$ of $\N$ such that $f$ is $\mu$-$A$-equicontinuous.
\end{cor}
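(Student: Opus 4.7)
The corollary is essentially an immediate packaging of the two preceding theorems, so the plan is to reduce one direction to Theorem \ref{f} and the other to Theorem \ref{a}, with only a minor bridging step in between.

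For the direction ``$f$ is $\mu$-$A$-equicontinuous $\Rightarrow$ $f$ is rigid,'' there is nothing new to prove: Theorem \ref{f} states exactly this, producing a rigidity sequence $\{n_k\}$ along which $U_T^{n_k}f \to f$ in $L^2(\mu)$. So I would simply cite Theorem \ref{f} and be done.

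The direction ``$f$ is rigid $\Rightarrow$ there exists $A$ with $f$ $\mu$-$A$-equicontinuous'' is the one that requires a brief argument. By definition of rigidity of $f$, there is some subsequence $\{t_k\}$ of $\N$ with $U_T^{t_k}f \to f$ in $L^2(\mu)$. The obstacle is that Theorem \ref{a} demands a.e.\ convergence, not $L^2$-convergence. This is overcome by the standard fact that any $L^2$-convergent sequence has an a.e.-convergent subsequence: extract $\{a_n\} \subset \{t_k\}$ such that $U_T^{a_n}f \to f$ $\mu$-a.e. Then apply Theorem \ref{a} to the sequence $A = \{a_n : n \in \N\}$ to conclude that $f$ is $\mu$-$A$-equicontinuous.

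The main conceptual obstacle, if any, is precisely the $L^2$-to-a.e. passage, but this is a textbook fact and introduces no real difficulty. One small care point is to ensure that $\{a_n\}$ remains a strictly increasing subsequence of $\N$, which is automatic since we pick it as a subsequence of $\{t_k\}$. Combining the two directions then yields the biconditional in the corollary.
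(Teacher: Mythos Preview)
Your proposal is correct and matches the paper's intended route: the corollary is stated without proof precisely because it is the immediate combination of Theorem~\ref{f} (for $\Leftarrow$) and Theorem~\ref{a} (for $\Rightarrow$), with the only bridging step being the standard extraction of an a.e.-convergent subsequence from the $L^2$-convergent rigidity sequence. There is nothing to add or correct.
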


Using the notion of $f$-$\mu$-$A$-equicontinuity, we  give a different proof of Theorem \ref{main}:
\begin{proof}[alternative proof of Theorem \ref{main}]
	$(\Leftarrow)$: Assume $(X,T)$ is $\mu$-$A$-equicontinuous.
	For any $f\in C(X)$, it is clear that $f$ is $\mu$-$A$-equicontinuous. By Theorem \ref{f},
	\begin{equation*}
	\{U_T^{a_n}f:n\in \N\} \text{ is pre-compact in } L^2(\mu).
	\end{equation*}
	  Since  $C(X)$ is dense in $L^2(\mu)$, we have
	  \begin{equation*}
	  \{U_T^{a_n}f:n\in \N\} \text{ is pre-compact in } L^2(\mu), \forall f\in L^2(\mu).
	  \end{equation*}
	    By Theorem \ref{th1},
	$(X,\mathcal{B},\mu,T)$ is rigid.
	
	\medskip
	
$(\Rightarrow)$:
	Assume $(X,\mathcal{B},\mu,T)$ is rigid. Then there exists a subsequence $\{n_i\}$ of $\N$ such that
	\begin{equation*}
	U_T^{n_i}f\stackrel{L^2}\longrightarrow f, \forall f\in L^2(\mu).
	\end{equation*}  Let $\{f_n:n\in\N\}$ be a dense subset of $C(X)$. It is well known that 
	\begin{equation*}
	d(x,y)=\sum_{n=1}^\infty\dfrac{|f_n(x)-f_n(y)|}{2^{n+1}||f_n||_\infty}
	\end{equation*}
	is a compatible metric of $X$. Without loss of generality, assume $d$ is the metric on $X$. Since $U_T^{n_i}f_1\stackrel{L^2}\longrightarrow f_1$, we can find a subsequence $\{n_i^1\}$ of $\{n_i\}$ such that $U_T^{n_i^1}f_1\stackrel{a.e.}\longrightarrow f_1$. Note that $U_T^{n_i}f_2\stackrel{L^2}\longrightarrow f_2$, we have $U_T^{n_i^1}f_2\stackrel{L^2}\longrightarrow f_2$, so we can find a subsequence $\{n_i^2\}$ of $\{n_i^1\}$ such that $U_T^{n_i^2}f_2\stackrel{a.e.}\longrightarrow f_2$. Working inductively and by diagonal procedure, we can find a subsequence $A=\{a_i\}$ of $\{n_i\}$ such that $U_T^{a_i}f_n\stackrel{a.e.}\longrightarrow f_n$ for all $n\in\N$. By Theorem \ref{a}, $f_n$ is $\mu$-$A$-equicontinuous for all $n\in\N$. 
	
	Now we proof $(X,T)$ is $\mu$-$A$-equicontinuous. For any $\tau>0$ and $\epsilon>0$, there exists $N\in\N$ such that $\sum\limits_{n=N+1}^\infty\dfrac{1}{2^n}<\dfrac{\epsilon}{2}$. Since $f_1,\ldots,f_N$ are  $\mu$-$A$-equicontinuous, there exists a compact set
	$K$ with $\mu(K)>1-\tau$ such that $K$ is $f_i$-$A$-equicontinuous, $1\leq i\leq N$. Hence there exists $\delta>0$ such that  for any $x,y\in K$ and $d(x,y)<\delta$, we have 
	\begin{equation*}
	|f_n(T^{a_m}x)-f_nT^{a_m}(y)|<\dfrac{\epsilon}{2N}\cdot2^{n+1}||f_n||_\infty, \ \forall m\in\N,\ 1\leq n\leq N.
	\end{equation*}
	For any $m\in\N$, we have
	\begin{equation*}
	d(T^{a_m}x,T^{a_m}y)=\sum_{n=1}^\infty\dfrac{|f_n(T^{a_m}x)-f_n(T^{a_m}y)|}{2^{n+1}||f_n||_\infty}
	=\sum_{n=N+1}^\infty+\sum_{n=1}^N<\dfrac{\epsilon}{2}+\dfrac{\epsilon}{2}=\epsilon.
	\end{equation*}
	By Proposition \ref{ppp}, $(X,T)$ is $\mu$-$A$-equicontinuous.
\end{proof}

Now we  give the IP-version of our main result:
\begin{thm}
	Let $(X,T)$ be a t.d.s. and $\mu\in M(X,T)$. Then $(X,\mathcal{B},\mu,T)$ is rigid if and only if there exists an IP-set $F$ such that $(X,T)$ is $\mu$-$F$-equicontinuous. 
\end{thm}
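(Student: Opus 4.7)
The $(\Leftarrow)$ direction is immediate from Theorem \ref{th4}, since every IP-set is a subsequence of $\N$.

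For $(\Rightarrow)$, my plan is to follow the template of the alternative proof of Theorem \ref{main}, arranging that the rigidity subsequence carries IP-structure from the outset. Fix a countable dense $\{f_n\}_{n\in\N}\subset C(X)$ and realize the metric as $d(x,y)=\sum_{n=1}^\infty |f_n(x)-f_n(y)|/(2^{n+1}\|f_n\|_\infty)$, exactly as in that proof. By rigidity, let $\{t_k\}$ satisfy $U_T^{t_k}g\to g$ in $L^2(\mu)$ for every $g\in L^2(\mu)$. I will select $b_1<b_2<\cdots$ from $\{t_k\}$ inductively: at stage $i$, having fixed $b_1,\ldots,b_{i-1}$, choose $b_i$ so that $\|U_T^{b_i}f_n-f_n\|_{L^2}<2^{-(i+n)}$ for every $1\leq n\leq i$ --- a finite system of inequalities, resolvable along the rigidity sequence. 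Set $F:=FS\{b_i\}$.

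To conclude that $(X,T)$ is $\mu$-$F$-equicontinuous, I will first show each $f_n$ is $\mu$-$F$-equicontinuous; then, intersecting the associated large-measure compact sets and invoking the metric identity, as in the last paragraph of the alternative proof of Theorem \ref{main}, yields $\mu$-$F$-equicontinuity for $(X,T)$ itself. By Theorem \ref{a}, to establish $\mu$-$F$-equicontinuity of $f_n$ it suffices to verify $U_T^m f_n\to f_n$ a.e.\ as $m$ ranges over $F$ in its natural (increasing) enumeration. The basic estimate, produced by commutativity and unitarity of the $U_T^{b_j}$, is the triangle-inequality bound $\|U_T^m f_n-f_n\|_{L^2}\leq \sum_{i\in I(m)}\|U_T^{b_i}f_n-f_n\|_{L^2}$ for each $m=\sum_{i\in I(m)}b_i\in F$; combined with Chebyshev's inequality and Borel-Cantelli this delivers the required pointwise a.e.\ convergence.

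The principal technical obstacle is the summability needed for Borel-Cantelli: the bound above contains contributions from low-index summands $b_{i_j}$ with $i_j<n$ that the stage-$i_j$ inductive conditions do not directly control, so a naive appeal fails to yield $\sum_{m\in F}\mu(\{|U_T^mf_n-f_n|>\varepsilon\})<\infty$. To handle this I will strengthen the stage-$i$ requirement so as to control $\|U_T^{b_i}(U_T^c f_n)-U_T^c f_n\|_{L^2}<2^{-(i+n)}$ for every prior IP-sum $c\in FS\{b_1,\ldots,b_{i-1}\}\cup\{0\}$ and every $n\leq i$; by unitarity and commutativity this collapses to the condition on $\|U_T^{b_i}f_n-f_n\|$ alone, so each stage still involves only finitely many inequalities, but the reformulated form telescopes across the IP-tree structure of $F$ to produce $\sum_{m\in F}\|U_T^m f_n-f_n\|_{L^2}^2<\infty$, whence the desired a.e.\ convergence via Borel-Cantelli and the conclusion via Theorem \ref{a} and the metric argument.
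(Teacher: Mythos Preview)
There is a genuine gap in the $(\Rightarrow)$ direction. Your strengthened stage-$i$ condition is, as you yourself observe, equivalent by unitarity to the original one, so the telescoping bound remains $\|U_T^m f_n-f_n\|_{L^2}\le\sum_{i\in I(m)}\|U_T^{b_i}f_n-f_n\|_{L^2}$, and for fixed $n\ge 2$ the terms with $i<n$ stay uncontrolled. These terms are not merely a nuisance: writing $m=b_1+m'$ with $m'\in FS\{b_j:j\ge n\}$, one has $U_T^m f_n-f_n=U_T^{m'}(U_T^{b_1}f_n-f_n)+(U_T^{m'}f_n-f_n)$, whence $\|U_T^m f_n-f_n\|_{L^2}\ge\|U_T^{b_1}f_n-f_n\|_{L^2}-\|U_T^{m'}f_n-f_n\|_{L^2}$. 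There are infinitely many such $m$, cofinal in $F$, with $\|U_T^{m'}f_n-f_n\|_{L^2}$ arbitrarily small, so $\|U_T^m f_n-f_n\|_{L^2}$ does not even tend to $0$ along $F$ unless $U_T^{b_1}f_n=f_n$ in $L^2(\mu)$; since $b_1$ was chosen with reference only to $f_1$, this fails for some $n$. Hence the claimed summability $\sum_{m\in F}\|U_T^m f_n-f_n\|_{L^2}^2<\infty$ is false, a.e.\ convergence along $F$ is false (as $f_n\in C(X)$ is bounded, a.e.\ convergence would force $L^2$ convergence by dominated convergence), and the appeal to Theorem~\ref{a} collapses.

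The paper's argument avoids this obstruction by not seeking a.e.\ convergence along the IP-set at all. It first runs the diagonal procedure of the alternative proof of Theorem~\ref{main} to obtain a (non-IP) sequence $\{a_i\}$ with $d(T^{a_i}x,x)\to0$ a.e.; it then extracts generators with quantitative pointwise control, choosing $a_{n_i}$ so that $\mu(A_i)>1-4^{-i}$ where $A_i=\{x:d(T^{a_{n_i}}x,x)<2^{-i}\}$, sets $F=FS\{a_{n_i}\}$, and verifies $\mu$-$F$-equicontinuity \emph{directly} via Proposition~\ref{ppp}. The good set $K$ is an intersection, over $i\ge N$, of the preimages of $A_i$ under all prior IP-sums; the finitely many ``low'' IP-sums with all indices below $N$ are handled by uniform continuity of finitely many iterates of $T$, and the ``high'' generators contribute a pointwise geometric tail via telescoping in the metric $d$. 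Working with the pointwise events $A_i$ on sets of controlled measure lets a single choice of generators serve the metric $d$---and hence all $f_n$---simultaneously, which is exactly what your $L^2$ bookkeeping on individual $f_n$ cannot deliver.
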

\begin{proof}
	By Theorem \ref{main}, we only need to prove $(\Rightarrow)$.
	From the proof above, there exists a subsequence $A=\{a_i\}$ of $\N$ such that $U_T^{a_i}f_n\stackrel{a.e.}\longrightarrow f_n$ for all $n\in\N$.
	Hence
	\begin{equation*}
	d(T^{a_i}x,x)=\sum_{n=1}^\infty\dfrac{|f_n(T^{a_i}x)-f_n(x)|}{2^{n+1}||f_n||_\infty}\rightarrow 0 \text{ a.e.}.
	\end{equation*} 
	Since $T^{a_n}\stackrel{a.e.}\longrightarrow id$,  for each $i\in\N$, we can find $n_i\in\N$ such that 
	\begin{equation*}
	\mu(A_i)>1-\dfrac{1}{4^i},\text{ where }	A_i=\{x\in X:d(T^{a_{n_i}}x,x)<\dfrac{1}{2^i}\}.
	\end{equation*}
Let $F=FS\{a_{n_i}\}$, we prove that $(X,T)$ is $\mu$-$F$-equicontinuous.
For any $\tau>0$ and $\epsilon>0$, there exists $N\in\N$ such that $\dfrac{1}{2^{N-1}}<\min\{\tau,\dfrac{\epsilon}{3}\}$. Note that 
\begin{equation*}
\mu(\bigcap_{1\leq i_1<\cdots<i_k<i}T^{-(a_{n_{i_1}}+\cdots a_{n_{i_k}})}A_i)>1-\dfrac{1}{2^i},
\end{equation*}
we have 
\begin{equation*}
\mu(\bigcap_{i=N}^{\infty}\bigcap_{1\leq i_1<\cdots<i_k<i}T^{-(a_{n_{i_1}}+\cdots a_{n_{i_k}})}A_i)>1-\tau.
\end{equation*}
Let $K=\bigcap\limits_{i=N}^{\infty}\bigcap\limits_{1\leq i_1<\cdots<i_k<i}T^{-(a_{n_{i_1}}+\cdots a_{n_{i_k}})}A_i$, then $\mu(K)>1-\tau$. Since
\begin{equation*}
\{T^{a_{n_{i_1}}+\cdots a_{n_{i_k}}}:1\leq i_1<\cdots<i_k<N\}
\end{equation*}
  are uniformly equicontinuous, there exists $\delta>0$ such that 
\begin{equation*}
d(x,y)<\delta\Rightarrow d(T^{a_{n_{i_1}}+\cdots a_{n_{i_k}}}x,T^{a_{n_{i_1}}+\cdots a_{n_{i_k}}}y)<\dfrac{\epsilon}{3},1\leq i_1<\cdots<i_k<N.
\end{equation*}
For $x,y\in K, d(x,y)<\delta$, we prove that
$d(T^ax,T^ay)<\epsilon$ for all $a\in F$.
Let
\begin{equation*}
 a=a_{n_{i_1}}+\cdots a_{n_{i_k}}+a_{n_{i_{k+1}}}+\cdots a_{n_{i_t}},
\end{equation*}
 where $1\leq i_1<\cdots<i_k<N\leq i_{k+1}<\cdots<i_t$. Since $i_t\geq N$, we have 
 \begin{equation*}
 x\in\bigcap_{1\leq i_1<\cdots<i_k<i_t}T^{-(a_{n_{i_1}}+\cdots a_{n_{i_k}})}A_{i_t},
 \end{equation*}
  hence $T^{a_{n_{i_1}}+\cdots a_{n_{i_{t-1}}}}x\in A_{i_t}$. It follows that 
\begin{equation*}
d(T^{a_{n_{i_1}}+\cdots a_{n_{i_t}}}x,T^{a_{n_{i_1}}+\cdots a_{n_{i_{t-1}}}}x)<\dfrac{1}{2^{i_t}}.
\end{equation*}
Similarly
\begin{equation*}
d(T^{a_{n_{i_1}}+\cdots a_{n_{i_{t-1}}}}x,T^{a_{n_{i_1}}+\cdots a_{n_{i_{t-2}}}}x)<\dfrac{1}{2^{i_{t-1}}},
\end{equation*}
\begin{equation*}
\vdots
\end{equation*}
\begin{equation*}
d(T^{a_{n_{i_1}}+\cdots a_{n_{i_{k+1}}}}x,T^{a_{n_{i_1}}+\cdots a_{n_{i_{k}}}}x)<\dfrac{1}{2^{i_{k+1}}}.
\end{equation*}
Hence 
\begin{equation*}
d(T^{a_{n_{i_1}}+\cdots a_{n_{i_t}}}x,T^{a_{n_{i_1}}+\cdots a_{n_{i_{k}}}}x)<\dfrac{1}{2^{i_t}}+\dfrac{1}{2^{i_{t-1}}}+\cdots+\dfrac{1}{2^{i_{k+1}}}\leq\dfrac{1}{2^{N-1}}<\dfrac{\epsilon}{3}.
\end{equation*}
Similarly
\begin{equation*}
d(T^{a_{n_{i_1}}+\cdots a_{n_{i_t}}}y,T^{a_{n_{i_1}}+\cdots a_{n_{i_{k}}}}y)<\dfrac{\epsilon}{3}.
\end{equation*}
Then 
\begin{equation*}
\begin{split}
d(T^ax,T^ay)&\leq d(T^{a_{n_{i_1}}+\cdots a_{n_{i_t}}}x,T^{a_{n_{i_1}}+\cdots a_{n_{i_{k}}}}x)+d(T^{a_{n_{i_1}}+\cdots a_{n_{i_t}}}y,T^{a_{n_{i_1}}  +\cdots a_{n_{i_{k}}}}y)\\&+d(T^{a_{n_{i_1}}+\cdots a_{n_{i_k}}}x,T^{a_{n_{i_1}}+\cdots a_{n_{i_{k}}}}y)<\dfrac{\epsilon}{3}+\dfrac{\epsilon}{3}+
\dfrac{\epsilon}{3}=\epsilon.
\end{split}
\end{equation*}
By Proposition \ref{ppp}, $(X,T)$ is $\mu$-$F$-equicontinuous.
\end{proof}
Similarly we have: 
\begin{thm}
	Let $(X,T)$ be a t.d.s., $\mu\in M(X,T)$ and $f\in L^2(\mu)$. Then $f$ is rigid if and only if there exists an IP-set $F$ such that $f$ is $\mu$-$F$-equicontinuous. 
\end{thm}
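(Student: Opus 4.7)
The plan is to mirror the IP-version argument proved immediately above for the full t.d.s., but with the single function $f$ playing the role that the metric $d$ played there. The $(\Leftarrow)$ direction is immediate: an IP-set is a subsequence of $\N$, so Theorem \ref{f} gives rigidity of $f$ from $\mu$-$F$-equicontinuity.

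For $(\Rightarrow)$, suppose $f$ is rigid and pick $\{a_i\}$ with $U_T^{a_i}f\xrightarrow{L^2}f$; after passing to a subsequence we may assume $U_T^{a_i}f\to f$ $\mu$-a.e. Inductively choose $n_1<n_2<\cdots$ so that
\begin{equation*}
A_i:=\{x\in X:|f(T^{a_{n_i}}x)-f(x)|<1/2^i\}\text{ has }\mu(A_i)>1-1/4^i,
\end{equation*}
and set $F=FS\{a_{n_i}\}$. To verify $\mu$-$F$-equicontinuity of $f$ via Proposition \ref{p}, fix $\tau,\epsilon>0$ and take $N\in\N$ with $1/2^{N-1}<\min\{\tau/2,\epsilon/3\}$. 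The same counting argument used in the preceding IP-proof yields
\begin{equation*}
K_1:=\bigcap_{i\geq N}\bigcap_{1\leq i_1<\cdots<i_k<i}T^{-(a_{n_{i_1}}+\cdots+a_{n_{i_k}})}A_i,\qquad \mu(K_1)>1-\tau/2.
\end{equation*}
Only finitely many ($\leq 2^{N-1}$) head sums $b=a_{n_{i_1}}+\cdots+a_{n_{i_k}}$ with $i_k<N$ occur (including the empty sum $b=0$, for which $U_T^b f=f$), so Lusin's theorem, exactly as in the proof of Theorem \ref{a}, produces a compact $K_0$ with $\mu(K_0)>1-\tau/2$ on which every $U_T^b f$ is continuous, hence uniformly continuous. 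Pick $\delta>0$ such that $x,y\in K_0$, $d(x,y)<\delta$ forces $|f(T^b x)-f(T^b y)|<\epsilon/3$ for each head sum $b$. Let $K=K_0\cap K_1$, so $\mu(K)>1-\tau$. For any $x,y\in K$ with $d(x,y)<\delta$ and any $a\in F$, split $a$ as head $b$ plus tail $c=a_{n_{i_{k+1}}}+\cdots+a_{n_{i_t}}$ with $i_{k+1}\geq N$; telescoping along the tail using membership in the $A_{i_{j+1}}$'s (which is encoded in $K_1$) gives
\begin{equation*}
|f(T^a x)-f(T^b x)|\leq\sum_{j=k}^{t-1}\frac{1}{2^{i_{j+1}}}<\frac{1}{2^{N-1}}<\frac{\epsilon}{3},
\end{equation*}
and likewise for $y$. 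Combined with the Lusin bound on $|f(T^b x)-f(T^b y)|$, the triangle inequality yields $|f(T^a x)-f(T^a y)|<\epsilon$, as required.

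The only genuinely new obstacle compared with the topological IP-version is the treatment of the head: there, continuity of the metric and of the finitely many maps $T^b$ on the compact space $X$ made uniform equicontinuity on the head automatic, whereas here $U_T^b f$ is merely an $L^2$-function and need not be continuous. The remedy is the same device as in Theorem \ref{a}: Lusin's theorem applied to the \emph{finitely} many head functions produces a compact set of measure $>1-\tau/2$ on which they are all uniformly continuous. Once this observation is in place, the rest of the argument is a direct transcription of the preceding IP-proof with the metric $d$ replaced by $f$.
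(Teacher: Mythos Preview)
Your proof is correct and is precisely the adaptation the paper has in mind when it writes ``Similarly we have''. The argument mirrors the IP-version for the full system, and you have correctly isolated the one place where the transcription is not automatic: for the finitely many head sums $b$ the paper could invoke uniform continuity of the maps $T^b$ on the compact space $X$, whereas for a single $f\in L^2(\mu)$ the functions $U_T^b f$ need not be continuous, so Lusin's theorem (as in Theorem \ref{a}) is the right substitute. The tail telescoping via the sets $A_i$ and the intersection set $K_1$ carries over verbatim.
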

\subsection{The topological case}
In \cite{hy1}, the authors proved the following theorem:
\begin{thm}\cite[Lemma 4.1]{hy1}
	Let $(X,T)$ be a t.d.s.. If $(X,T)$ is uniformly rigid, then there exists an IP-set $A$ such that $(X,T)$ is $A$-equicontinuous. If in addition $(X,T)$ is an E-system, the converse holds. 	
\end{thm}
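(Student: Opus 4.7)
The statement has two parts, and I would treat them separately.

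\textbf{Forward direction} (uniform rigidity implies IP-set $A$-equicontinuity). Given $\{t_k\}$ with $T^{t_k}\to id$ uniformly on $X$, I would extract a subsequence $b_i:=t_{n_i}$ with $\sup_{x\in X}d(T^{b_i}x,x)<2^{-i}$ and set $A:=FS\{b_i\}$. Then $A$-equicontinuity of $(X,T)$ is the topological shadow of the telescoping argument already carried out in this paper for the IP-version of Theorem \ref{main}: for given $\epsilon>0$, pick $N$ with $2^{-(N-1)}<\epsilon/3$, use joint uniform continuity of the finitely many maps $\{T^{b_{i_1}+\cdots+b_{i_k}}:1\le i_1<\cdots<i_k<N\}$ to obtain $\delta>0$, and for $a=b_{i_1}+\cdots+b_{i_t}\in A$ with $i_k<N\le i_{k+1}$ telescope
\[
d(T^a x,\; T^{b_{i_1}+\cdots+b_{i_k}}x)\;\le\;\sum_{j=k+1}^{t}2^{-i_j}\;\le\;2^{-(N-1)}\;<\;\epsilon/3,
\]
and similarly for $y$; combined with the head estimate via the triangle inequality, this gives $d(T^ax,T^ay)<\epsilon$ whenever $d(x,y)<\delta$. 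Because the uniform rigidity bound holds pointwise in $x$, no measure-theoretic good set is required, and the argument is strictly simpler than its measure-theoretic counterpart.

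\textbf{Converse direction} (under the E-system hypothesis). Let $\mu\in M(X,T)$ have full support. By $A$-equicontinuity and Arzel\`a--Ascoli, the closure $G:=\overline{\{T^a:a\in A\}}$ in $C(X,X)$ with the uniform topology is compact. Since $A=FS\{b_i\}$ is an IP-set, Hindman's theorem yields an idempotent ultrafilter $p\in\beta\N$ with $A\in p$. Define $T^p(x):=p\text{-}\lim_n T^n(x)$; since $A\in p$, the limit is effectively taken over $A$, so equicontinuity together with compactness of $G$ force $T^p\in G$ and yield a sequence $a_k\in A$ with $T^{a_k}\to T^p$ \emph{uniformly}. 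The idempotent identity $p+p=p$ translates to $T^p\circ T^p=T^p$.

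It remains to show $T^p=id$, and this is where the E-system hypothesis enters. Each $T^a$ preserves $\mu$, so passing to the uniform limit $T^{a_k}\to T^p$ in $\int f\circ T^{a_k}\,d\mu=\int f\,d\mu$ for $f\in C(X)$ gives $(T^p)_\ast\mu=\mu$. The idempotent relation forces $(T^p)^{-1}(X\setminus T^p(X))=\varnothing$, hence $\mu(X\setminus T^p(X))=0$; as $T^p(X)$ is closed and $\mu$ has full support, $T^p(X)=X$, so $T^p$ is surjective. A surjective continuous idempotent of $X$ equals the identity (for $y=T^p(x)$, $T^p(y)=T^pT^p(x)=T^p(x)=y$), so $T^p=id$ and $T^{a_k}\to id$ uniformly, which is exactly uniform rigidity. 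The main obstacle is the correct interplay between $p$-limits and uniform limits: one must ensure that the abstract $p$-limit construction is realized as a uniform limit of a concrete sequence from $A$, which is guaranteed precisely by Arzel\`a--Ascoli applied to the $A$-equicontinuous family, and the full support of $\mu$ is used exactly once, in the last surjectivity step that pins $T^p$ down to the identity.
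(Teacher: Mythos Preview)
The statement you are proving is quoted from \cite{hy1} and is not given its own proof in this paper. What the paper does prove, immediately afterward, is a \emph{stronger} result that drops both the IP-set structure and the E-system hypothesis: $A$-equicontinuity along \emph{any} subsequence $A$ already implies uniform rigidity. So the relevant comparison is with that argument.

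Your forward direction is correct and is exactly the topological specialisation of the telescoping argument the paper uses for the IP-version of Theorem~\ref{main}; nothing to add there.

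Your converse is also correct, but it is considerably more elaborate than necessary, and it hides the main point the paper is making. The paper's argument for the (stronger) converse is three lines: by Arzel\`a--Ascoli, $\{T^{a_n}:n\in\N\}$ is precompact in $C(X,X)$; for each $k$ cover it by finitely many $\tfrac{1}{k}$-balls centred at $T^{a_1},\dots,T^{a_N}$; then for large $n$ there is some $i\le N$ with $\|T^{a_n-a_i}-id\|_\infty=\|T^{a_n}-T^{a_i}\|_\infty<\tfrac{1}{k}$, and one extracts an increasing rigidity sequence. No IP structure, no ultrafilters, no invariant measure. Your idempotent-ultrafilter route does work---Hindman gives an idempotent $p$ with $A\in p$, compactness of $G=\overline{\{T^a:a\in A\}}$ realises $T^p$ as a genuine element of $G$, and idempotency plus full support pin $T^p$ down to $id$---and it is a nice illustration of the algebra of $\beta\N$. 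But by invoking the full-support measure at the surjectivity step you make the E-system hypothesis look essential, whereas the paper's contribution here is precisely that it is not: the elementary precompactness/pigeonhole trick already gives uniform rigidity from $A$-equicontinuity along any infinite $A$.
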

In fact we have:
\begin{thm}
	Let $(X,T)$ be a t.d.s.. Then $(X,T)$ is uniformly rigid if and only if  there exists an IP-set $A$ such that $(X,T)$ is $A$-equicontinuous, if and only if  there exists a subsequence $A$ of $\N$ such that $(X,T)$ is $A$-equicontinuous.
\end{thm}
\begin{proof}
	We only need to prove that: If there exists a subsequence $A$ of $\N$ such that $(X,T)$ is $A$-equicontinuous, then $(X,T)$ is uniformly rigid.
	
	Assume there exists a subsequence $A=\{a_i:i\in\N\}$ of $\N$ such that $(X,T)$ is $\mu$-$A$-equicontinuous. By Ascoli's Theorem, $\{T^{a_n}:n\in\N\}$ is pre-compact in $C(X,X)$ with uniform topology.
	For every $k\in\N$, there exists $N\in\N$ such that
	\begin{equation*}
	\{T^{a_n}:n\in \N\}\subset\bigcup_{i=1}^NB(T^{a_i},\frac{1}{k}).
	\end{equation*}
	For $n\in\N$, there exists $1\leq i\leq N$ such that 
	\begin{equation*}
	||T^{a_n-a_i}-id||_\infty=||T^{a_n}-T^{a_i}||_\infty<\frac{1}{k}.
	\end{equation*}
	Let $n_k=a_n-a_i$, since $a_n\rightarrow+\infty$ we can choose $\{n_k:k\in\N\}$ increasing, hence we have $||T^{n_k}-id||_\infty<\frac{1}{k},\ T^{n_k}\rightarrow id$ uniformly.
\end{proof}
\section{measure-theoretic mean equicontinuity and rigidity}
In \cite{fe2}, the author introduced the concept of measure-theoretic mean equicontinuity and defined a notion called \textit{$\mu$-mean-equicontinuity}. In \cite{h}, the authors introduced a similar notion called \textit{$\mu$-equicontinuity in the mean} and showed that they are equivalent. In fact, it is shown in \cite{h} that they are equivalent to discrete spectrum for a general $\mu\in M(X,T)$.

Following the idea, in this section we consider the  measure-theoretic mean equicontinuity along some subsequence $A$ of $\N$.
 We  study the relation between measure-theoretic mean equicontinuity and rigidity.

\medskip

Let $(X,T)$ be a t.d.s. and $\mu\in M(X,T)$, let $K$ be a subset of $X$ and $A=\{a_i:i\in\N\}$ be a subsequence of $\N$. 

For $x,y\in X$, define

\begin{equation*}
\bar{d}_n^A(x,y)=\frac{1}{n}\sum_{i=0}^{n-1}d(T^{a_i}x,T^{a_i}y).  
\end{equation*}
We say that $K$ is \textit{$A$-equicontinuous in the mean} if for any $\epsilon>0$, there exists $\delta>0$ such that 
\begin{equation*}
x,y \in K,d(x,y)<\delta\Rightarrow \bar{d}_n^A(x,y)<\epsilon,\forall n\in\N.
\end{equation*}

We say $(X,T)$ is  \textit{$A$-equicontinuous in the mean} if $X$ is $A$-equicontinuous in the mean.

We say $(X,T)$ is \textit{$\mu$-$A$-equicontinuous in the mean} if for any $\tau>0$, there exists a compact subset $K$ of $X$ with $\mu(K)>1-\tau$ such that $K$ is $A$-equicontinuous in the mean.

Similar as Propositin \ref{ppp}, we have:
\begin{prop}\label{pp}
	Let $(X,T)$ be a t.d.s. and  $\mu\in M(X,T)$. Let $A=\{a_i:i\in\N\}$ be a subsequence of $\N$. Then the following statements are equivalent:
	\begin{enumerate}
		\item $(X,T)$ is $\mu$-$A$-equicontinuous in the mean.
		\item For any $\tau>0$ and $\epsilon>0$, there exist a compact subset $K$ of $X$ with $\mu(K)>1-\tau$ and $\delta>0$ such that 
		\begin{equation*}
		x,y\in K,d(x,y)<\delta\Rightarrow \bar{d}_n^A(x,y)<\epsilon,\ \forall n\in\N.
		\end{equation*}
	\end{enumerate}
\end{prop}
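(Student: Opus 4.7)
The plan is to mirror the proof of Proposition \ref{ppp} essentially verbatim, since the quantity $\bar{d}_n^A(x,y)$ plays the exact same role in the definition of $\mu$-$A$-equicontinuity in the mean that $d(T^{a_n}x,T^{a_n}y)$ plays in the definition of $\mu$-$A$-equicontinuity. Nothing in the earlier argument relied on the specific form of that quantity, only on the fact that the condition was required to hold uniformly over $n\in\N$.

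For the direction $(1)\Rightarrow(2)$, given $\tau>0$, the definition of $\mu$-$A$-equicontinuity in the mean produces a compact set $K$ with $\mu(K)>1-\tau$ such that $K$ is $A$-equicontinuous in the mean, and then for the given $\epsilon>0$ the required $\delta$ is supplied directly by the definition of $A$-equicontinuity in the mean applied to $K$. So this direction is immediate.

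For the direction $(2)\Rightarrow(1)$, fix $\tau>0$. For each $l\in\N$, apply $(2)$ with the threshold $\tau/2^l$ and $\epsilon=1/l$ to obtain a compact set $K_l$ with $\mu(K_l)>1-\tau/2^l$ and $\delta_l>0$ such that
\begin{equation*}
x,y\in K_l,\ d(x,y)<\delta_l \Rightarrow \bar{d}_n^A(x,y)<\tfrac{1}{l},\ \forall n\in\N.
\end{equation*}
Set $K=\bigcap_{l=1}^\infty K_l$, which is compact and satisfies $\mu(K)\geq 1-\sum_{l=1}^\infty \tau/2^l = 1-\tau$. Given any $\epsilon>0$, choose $l$ with $1/l<\epsilon$; since $K\subset K_l$, the value $\delta:=\delta_l$ witnesses $A$-equicontinuity in the mean of $K$ at level $\epsilon$. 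Hence $(X,T)$ is $\mu$-$A$-equicontinuous in the mean.

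There is no real obstacle here: this is a structural transfer of the earlier lemma, and the only point worth checking is that the countable intersection $K$ inherits both positive measure (via subadditivity of $\mu(K_l^c)$) and compactness (as a closed subset of the compact space $X$), both of which are automatic.
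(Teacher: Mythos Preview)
Your proof is correct and is exactly the approach the paper intends: the paper does not spell out a proof of Proposition~\ref{pp} but simply writes ``Similar as Proposition~\ref{ppp}, we have:'', and your argument is precisely that transfer. One cosmetic point: since each $\mu(K_l^c)<\tau/2^l$ strictly, the sum is strictly less than $\tau$, so in fact $\mu(K)>1-\tau$ (not merely $\geq$), which is what the definition requires.
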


We say that $K$ is \textit{$A$-mean-equicontinuous} if for any $\epsilon>0$, there exists $\delta>0$ such that 
\begin{equation*}
x,y \in K,d(x,y)<\delta\Rightarrow \limsup_{n\rightarrow\infty}\bar{d}_n^A(x,y)<\epsilon.
\end{equation*}

 We say $(X,T)$ is  \textit{$A$-mean-equicontinuous} if $X$ is $A$-mean-equicontinuous.

We say $(X,T)$ is \textit{$\mu$-$A$-mean-equicontinuous } if for any $\tau>0$, there exists a compact subset $K$ of $X$ with $\mu(K)>1-\tau$ such that $K$ is $A$-mean-equicontinuous.
 
 \medskip
 
The notions of $\mu$-$A$-equicontinuous in the mean  and $\mu$-$A$-mean-equicontinuous are equivalent:
\begin{thm}
	Let $(X,T)$ be a t.d.s. and $\mu\in M(X,T)$. Let $A=\{a_i:i\in\N\}$ be a subsequence  of $\N$, then the following statements are equivalent:
	\begin{enumerate}
		\item $(X,T)$ is $\mu$-$A$-equicontinuous in the mean.
		\item $(X,T)$ is $\mu$-$A$-mean-equicontinuous.
	\end{enumerate}	
\end{thm}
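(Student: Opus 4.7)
The direction $(1)\Rightarrow(2)$ is immediate: if a compact $K$ with $\mu(K)>1-\tau$ is $A$-equicontinuous in the mean, then applying the EIM definition with threshold $\epsilon/2$ yields $\delta>0$ such that $x,y\in K$, $d(x,y)<\delta$ forces $\bar d_n^A(x,y)<\epsilon/2$ for every $n\in\N$, so $\limsup_n \bar d_n^A(x,y)\le \epsilon/2<\epsilon$; the same $K$ witnesses (2).

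For $(2)\Rightarrow(1)$, by the analogue of Proposition \ref{pp} (same intersection trick used to prove Proposition \ref{ppp}) it suffices, given $\tau,\epsilon>0$, to produce one compact $K\subset X$ with $\mu(K)>1-\tau$ and $\delta>0$ such that $\bar d_n^A(x,y)<\epsilon$ for every $n\in\N$ whenever $x,y\in K$ and $d(x,y)<\delta$. Using (2), pick a compact $K_0$ with $\mu(K_0)>1-\tau/2$ that is $A$-mean-equicontinuous, and fix $\delta_0>0$ so that $x,y\in K_0$ with $d(x,y)<\delta_0$ gives $\limsup_n \bar d_n^A(x,y)<\epsilon/4$. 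The strategy is to split the supremum over $n$ into a \emph{tail} part $n\ge N_0$, handled by the $\limsup$ hypothesis through an Egorov-type extraction, and a \emph{head} part $n<N_0$, handled by the uniform continuity of the finitely many homeomorphisms $T^{a_i}$.

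For the tail, set $g_N(x,y):=\sup_{n\ge N}\bar d_n^A(x,y)$; each $\bar d_n^A$ is continuous, so $g_N$ is lower semicontinuous, and $g_N$ decreases pointwise to $\limsup_n \bar d_n^A(x,y)$. The measurable sets
\[
W_N:=\{(x,y)\in K_0\times K_0:d(x,y)<\delta_0,\ g_N(x,y)\ge \epsilon/2\}
\]
are decreasing in $N$ with empty intersection, since on close pairs of $K_0$ the $\limsup$ is already strictly less than $\epsilon/4<\epsilon/2$; hence $(\mu\times\mu)(W_N)\downarrow 0$. Choose $N_0$ with $(\mu\times\mu)(W_{N_0})$ small relative to $\tau$; a Fubini--Chebyshev argument then extracts a compact $K\subset K_0$ with $\mu(K)>1-\tau$ on which $g_{N_0}(x,y)<\epsilon/2$ holds uniformly for all $x,y\in K$ with $d(x,y)<\delta_0$, giving the tail bound. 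For the head, the uniform continuity on the compact space $X$ of the finitely many maps $T^{a_0},T^{a_1},\ldots,T^{a_{N_0-1}}$ yields $\delta_1>0$ such that $d(x,y)<\delta_1$ implies $d(T^{a_i}x,T^{a_i}y)<\epsilon$ for all $i<N_0$, whence $\bar d_n^A(x,y)<\epsilon$ for $n\le N_0$. Taking $\delta:=\min(\delta_0,\delta_1)$ finishes the argument.

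The main obstacle is the Fubini--Chebyshev extraction: having $(\mu\times\mu)(W_{N_0})$ small only says that, for most $x$, the slice $\{y:(x,y)\in W_{N_0}\}$ has small $\mu$-measure, whereas EIM requires a single compact $K$ whose square $K\times K$ is disjoint from $W_{N_0}$ outright. I would overcome this by iterating the Chebyshev removal with geometrically shrinking thresholds: at each step remove the $x$'s with ``too many'' bad partners in the current set, halving the bad-partner threshold at the next iteration, while controlling the total mass removed by a summable geometric series. Combined with the lower semicontinuity of $g_{N_0}$ and the inner regularity of $\mu$, passing to the compact limit of these refinements produces the required $K$ on which the tail bound is uniform.
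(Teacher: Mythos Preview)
Your direction $(1)\Rightarrow(2)$ is fine, as is your treatment of the head part $n\le N_0$ via uniform continuity of the finitely many iterates. The gap is in the tail extraction. Your iterative Chebyshev removal, carried to its limit, produces at best a compact $K_\infty$ on which every slice $\{y\in K_\infty: d(x,y)<\delta_0,\ g_{N_0}(x,y)\ge\epsilon/2\}$ has $\mu$-measure zero---not on which these slices are empty. Lower semicontinuity of $g_{N_0}$ does not close this gap: it makes $\{g_{N_0}\ge\epsilon/2\}$ closed, so each bad slice is a compact null set, but compact null sets need not be empty, and nothing in your scheme provides a way to shrink $\delta$ so as to avoid them uniformly in $x$. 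In general a symmetric $W\subset X\times X$ of arbitrarily small $(\mu\times\mu)$-measure need not admit a large $K$ with $(K\times K)\cap W=\emptyset$, and your argument uses no structure of $W_{N_0}$ beyond its measure and semicontinuity.

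The paper avoids the two-variable difficulty entirely by reducing to finitely many one-variable exhaustions via a net and the triangle inequality for the pseudometrics $\bar d_n^A$ (it refers to the proof of the $f$-version, Theorem~\ref{eq}). Partition $K_0$ into pieces $X_1,\dots,X_m$ of diameter $<\delta_0$ and fix $x_j\in X_j$; since $\limsup_n\bar d_n^A(x_j,y)<\epsilon/2$ for every $y\in X_j$, the sets $A_N(x_j)=\{y\in X_j:\bar d_n^A(x_j,y)<\epsilon/2\ \text{for all}\ n>N\}$ increase to $X_j$, so a single $N$ makes $\bigcup_j A_N(x_j)$ large. Choose pairwise disjoint compact $E_j\subset A_N(x_j)$ with $\mu(\bigcup_j E_j)>1-\tau$, set $K=\bigcup_j E_j$, and take $\delta$ below both your $\delta_1$ and $\min_{i\ne j}d(E_i,E_j)$. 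Then close points $x,y\in K$ lie in a common $E_{j_0}$, and for $n>N$ one routes through the net point: $\bar d_n^A(x,y)\le\bar d_n^A(x,x_{j_0})+\bar d_n^A(x_{j_0},y)<\epsilon$. The idea you are missing is precisely this net-plus-triangle-inequality reduction; with it, the extraction becomes $m$ separate one-dimensional Egorov-type steps rather than an attempt to carve a full square out of a two-variable good set.
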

\begin{proof}
	The proof is similar to Theorem \ref{eq}.
	\end{proof}
\begin{rem}
As the same in \cite{li}, it is easy to see that the definition of $\mu$-$A$-mean-equicontinuity is independent on the choice of metric $d$.
\end{rem}
\begin{rem}
	The property of mean-equicontinuity in a topological dynamical system has also been studied, we refer readers to \cite{me1,li,me2}. 
\end{rem}

Now we prove the main theorem of this section. The idea is from \cite[Theorem 2.21]{metric}.
\begin{thm}\label{mean}
Let $(X,T)$ be a t.d.s. and $\mu\in M(X,T)$. If there exists a subsequence $A=\{a_i:i\in\N\}$ of $\N$ with $\bar{D}(A)>0$ such that $(X,T)$ is $\mu$-$A$-mean-equicontinuous, then $(X,\mathcal{B},\mu,T)$ is rigid.
\end{thm}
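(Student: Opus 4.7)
The plan is to reduce the statement to Theorem \ref{main}: it suffices to construct a subsequence $B$ of $\N$ such that $(X,T)$ is $\mu$-$B$-equicontinuous. The ingredient that converts the mean condition into a pointwise one is the positive upper density of $A$, which creates enough ``room'' to survive a diagonal extraction.

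First I would invoke the equivalence of $\mu$-$A$-mean-equicontinuity and $\mu$-$A$-equicontinuity in the mean established earlier in this section to pass to the uniform form of the hypothesis: for every $\tau>0$ and $\epsilon>0$, fix a compact $K\subseteq X$ with $\mu(K)>1-\tau$ and $\delta>0$ such that $x,y\in K$ with $d(x,y)<\delta$ satisfy $\bar{d}_n^A(x,y)<\epsilon^2$ for \emph{every} $n\in\N$. Markov's inequality applied to this uniform bound then yields that the bad index set $\{i\in\N:d(T^{a_i}x,T^{a_i}y)\geq\epsilon\}$ has upper density at most $\epsilon$ within the indexing of $A$.

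Next, writing $c=\bar{D}(A)>0$, I would extract a subsequence $B\subseteq A$ of positive upper density in $\N$ that is simultaneously ``good'' for all pairs at every scale. Choose $\epsilon_k\downarrow 0$ and corresponding $\delta_k>0$ from mean-equicontinuity, cover $K$ by a finite $\delta_k/3$-net $F_k=\{x_1^{(k)},\ldots,x_{m_k}^{(k)}\}$, and observe that the union of bad index sets over $\delta_k$-close pairs in $F_k\times F_k$ has upper density (in $A$) at most $m_k^2\epsilon_k$. Choosing the $\epsilon_k$ so that $\sum_k m_k^2\epsilon_k<c/2$, a diagonal selection produces $B\subseteq A$ whose upper density in $\N$ is at least $c/2$ and such that for each $k$ and each $\delta_k$-close pair $(u,v)\in F_k\times F_k$, eventually $d(T^b u,T^b v)<\epsilon_k$ for all $b\in B$.

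The remaining step, which I expect to be the principal obstacle, is to upgrade this ``net equicontinuity'' to equicontinuity on a compact set $K'\subseteq K$ with $\mu(K')>1-2\tau$. The triangle inequality $d(T^b x,T^b y)\leq d(T^b x,T^b u)+d(T^b u,T^b v)+d(T^b v,T^b y)$ requires control of $d(T^b x,T^b u)$ for a general $x\in K'$ close to its net approximant $u\in F_k$, and this is not directly furnished by the pairwise mean estimates on the net. Following the idea of \cite[Theorem 2.21]{metric}, I would define $K'$ as the intersection over $k$ of the sets of $x\in K$ whose individual bad-index sets (against the finite net) shrink rapidly at scale $\epsilon_k$, and use a Borel--Cantelli-style bound on the $\mu\times\mu$-measure of pairs that are ``expanded'' by $T^{a_i}$ in order to keep $\mu(K')$ as large as desired. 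Once this is in place, Proposition \ref{ppp} yields $\mu$-$B$-equicontinuity of $(X,T)$, and Theorem \ref{main} finishes the argument, giving rigidity of $(X,\mathcal{B},\mu,T)$.
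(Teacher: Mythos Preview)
Your outline diverges from the paper's argument and, as written, contains a genuine gap at the step you yourself flag as the principal obstacle. The difficulty is not cosmetic: mean control gives you, for each pair $(x,y)$ in $K$ with $d(x,y)<\delta$, a set of \emph{indices} of small density on which $d(T^{a_i}x,T^{a_i}y)$ is large, but that bad set depends on the pair. Your subsequence $B$ is selected to avoid the bad indices only for the finitely many net pairs in $F_k\times F_k$; for a general $x\in K$ and its approximant $u\in F_k$ you have no reason to expect $d(T^bx,T^bu)<\epsilon_k$ for $b\in B$. The proposed Borel--Cantelli step does not close this, because the expansion sets $\{(x,y):d(T^{a_i}x,T^{a_i}y)\ge\epsilon\}$ all have the \emph{same} $\mu\times\mu$-measure (by $T\times T$-invariance), so no summability is available. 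There is also a circularity in your choice of parameters: $m_k$ depends on $\delta_k$, which depends on $\epsilon_k$, and nothing prevents $m_k^2\epsilon_k$ from being bounded away from zero.

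The paper sidesteps both issues by not attempting to pass to $\mu$-$B$-equicontinuity at all. Instead it shows directly that $\{U_T^{a_n}d:n\in\N\}$ is pre-compact in $L^1(\mu\times\mu)$, and then invokes Theorem~\ref{th3} and Theorem~\ref{main}. The mechanism is a pigeonhole argument: on a large $K$ one approximates $U_T^{a_s}d$ by a step function $d_s$ constant on pieces $X_i\times X_j$; the mean-equicontinuity bound forces $\sum_{s\le n_0}\|d_s-U_T^{a_s}d\|_{L^1}\le 4n_0\epsilon$, so a positive proportion of indices $s$ have $\|d_s-U_T^{a_s}d\|_{L^1}$ small. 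The positive upper density of $A$ is used, not to survive a removal of bad indices, but to guarantee that many of these good $s$ also satisfy $a_s\le n_0$; a second pigeonhole on the finitely many possible step functions then yields many $s$ with mutually close $U_T^{a_s}d$, and a final counting of sums $a_{s_p}+a_{n_i}$ produces a collision contradicting an assumed $L^1$-separation of $\{U_T^{a_{n_i}}d\}$. This route never needs to upgrade a mean bound to a pointwise one on an uncountable family of pairs.
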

\begin{proof}
Without loss of generality, we assume the metric $d\leq1$ and $\bar{D}(A)>\frac{1}{M},\ M\in\N$. By Theorem \ref{th3} and Theorem \ref{main}, we only need to prove  ${\{U_T^{a_n}d:n\in \N\}}$ is pre-compact in $L^1(\mu\times\mu)$.
Assume the contrary, then there exists $\epsilon>0$ and a subsequence $\{n_i\}$ of $\N$ such that
\begin{equation*}
\|U_T^{a_{n_i}}d-U_T^{a_{n_j}}d\|_{L^1}>17M\epsilon\text{ for any }i\neq j\in\N.
\end{equation*}
 Since $(X,T)$ is $\mu$-$A$-equicontinuous in the mean, there exist a compact set $K$ with $\mu(K)>1-\epsilon$ and $\delta>0$ such that
 \begin{equation*}
  x,y\in K, d(x,y)<\delta\Rightarrow \bar{d}_n^A(x,y)<\frac{\epsilon}{2},\ \forall n\in\N.
 \end{equation*}
By the compactness of $X$, there exist $K=\bigcup\limits_{i=1}^NX_i$, where $diamX_i<\delta,1\leq i\leq N$,\ $X_i\cap X_j=\emptyset, 1\leq i\neq j\leq N$. Without loss of generality, we assume $X_i\neq\emptyset$ for all $1\leq i\leq N$  and choose $x_i\in X_i,1\leq i\leq N$.

Let $C=(\frac{1}{[\epsilon]}+1)^{N^2}$. Choose $n_0\in\N$ sufficiently large such that
\begin{equation*}
 n_0>a_{n_{10MC}}\text{ and }|A\cap[1,n_0]|>\frac{n_0}{M}.
\end{equation*} 
Note that $diam_{\bar{d}_{n_0}^A}X_i\leq\epsilon,1\leq i\leq N$.  Let $X_0=K^c,$ then   $\{X_0,X_1,\ldots,X_N\}$ is a partition of $X$ with
\begin{equation*}
 \mu(X_0)<\epsilon,
\ diam_{\bar{d}_{n_0}^A}X_i\leq\epsilon,1\leq i\leq N.
\end{equation*} 
 
For any $1\leq s\leq n_0$, define
$$ d_s(x,y)=
\left\{
             \begin{array}{ll}
              0, & (x,y)\in X_0\times X\cup X\times X_0\hbox{;} \\
               U^{a_s}_Td(x_i,x_j), &(x,y)\in X_i\times X_j \ 1\leq i,j\leq N\hbox{.}
             \end{array}
           \right.
$$
For any $(x,y)\in X_i\times X_j$, we have 
\begin{equation*}
|d_s(x,y)-U^{a_s}_Td(x,y)|=|U^{a_s}_Td(x,y)-U^{a_s}_Td(x_i,x_j)|\leq U^{a_s}_Td(x,x_i)+U^{a_s}_Td(x_j,y).
\end{equation*}
Sum $s$ from $1$ to $n_0$, we have
\begin{equation*}
\begin{split}
\sum_{s=1}^{n_0}|d_s(x,y)-U^{a_s}_Td(x,y)|&\leq 
\sum_{s=1}^{n_0}U^{a_s}_Td(x,x_i)+\sum_{s=1}^{n_0}U^{a_s}_Td(x_j,y) \\
&=n_0\bar{d}_{n_0}^A(x,x_i)+n_0\bar{d}_{n_0}^A(x_j,y).
\end{split}
\end{equation*}
Hence
\begin{equation*}
 \sum_{s=1}^{n_0}|d_s(x,y)-U^{a_s}_Td(x,y)|\leq 2n_0\epsilon
\end{equation*}
since  $diam_{\bar{d}_{n_0}^A}X_i\leq\epsilon,1\leq i\leq N.$
Integrate on $X_i\times X_j$, we have
\begin{equation*}
\int_{X_i\times X_j}\sum_{s=1}^{n_0}|d_s-U^{a_s}_Td|\leq 2n_0\epsilon(\mu\times\mu)(X_i\times X_j).
\end{equation*}
 Note that 
 \begin{equation*}
\begin{split}
 \int_{X_0\times X\cup X\times X_0}\sum_{s=1}^{n_0}|d_s-U^{a_s}_Td|&=\int_{X_0\times X\cup X\times X_0}\sum_{s=1}^{n_0}|U^{a_s}_Td|\\
 &\leq n_0(\mu\times\mu)(X_0\times X\cup X\times X_0)\leq2n_0\epsilon,
 \end{split}
\end{equation*}
 we have
\begin{equation*}
 \sum_{s=1}^{n_0}||d_s-U^{a_s}_Td||_{L^1}\leq4n_0\epsilon.
\end{equation*}
It follows that
\begin{equation*}
 |\{1\leq s\leq n_0:||d_s-U^{a_s}_Td||_{L^1}\leq8M\epsilon\}|\geq n_0-[\frac{n_0}{2M}].
\end{equation*}
Note that
\begin{equation*}
\begin{split}
 |A\cap[1,n_0]|=|\{s\in\N: a_s\leq n_0\}|&=\\
 |\{1\leq s\leq n_0: a_s\leq n_0\}|&>\frac{n_0}{M}\geq2[\frac{n_0}{2M}],
\end{split}
\end{equation*}
we have
\begin{equation*}
|\{1\leq s\leq n_0:||d_s-U^{a_s}_Td||_{L^1}\leq8M\epsilon \text{ and }a_s\leq n_0\}|\geq[\frac{n_0}{2M}].
\end{equation*}
Denote the left set by $S$. Divide $[0,1]$ into $\frac{1}{[\epsilon]}+1$  intervals averagely, then $[0,1]^{N^2}$ is divided into $(\frac{1}{[\epsilon]}+1)^{N^2}=C$ cubes. Since
\begin{equation*}
 d_s(x,y)=\sum_{1\leq i,j\leq N}b_{ij}1_{X_i\times X_j}, 0\leq b_{ij}\leq 1,
\end{equation*}
$\{b_{ij}\}_{1\leq i,j\leq N}$ must be in one of the $C$ cubes. By pigeonhole principle, there exists $S_0\subset S$ with $|S_0|\geq[\frac{[\frac{n_0}{2M}]}{C}]\geq\frac{n_0}{4MC}$
such that
 \begin{equation*}
 ||d_s-d_{s'}||_{L^1}<\epsilon, \ \forall s\neq s' \in S_0.
  \end{equation*}
It follows that for any $s\neq s' \in S_0$,
\begin{equation*}
\begin{split}
||U^{a_s}_Td-U^{a_{s'}}_Td||_{L^1}&\leq||d_s-U^{a_s}_Td||_{L^1}+||d_{s'}-U^{a_{s'}}_Td||_{L^1}+||d_s-d_{s'}||_{L^1}\\
&\leq8M\epsilon+8M\epsilon+\epsilon\leq17M\epsilon.
\end{split}
\end{equation*} 
Enumerate $S_0$ as
$1\leq s_1<s_2<\cdots<s_k\leq n_0,\ k\geq\frac{n_0}{4MC}.$
Consider the set
\begin{equation*}
 \{a_{s_i}+a_{n_j}:1\leq i\leq k,1\leq j\leq 10MC\},
\end{equation*}
 it contains $10kMC>2n_0$ elements. On the other hand, it has an upper bound of $2n_0$, hence there must be some $i\neq j,\ p\neq q$ with $a_{s_p}+a_{n_i}=a_{s_q}+a_{n_j}$. It follows that
 \begin{equation*}
  ||U^{a_{n_i}}_Td-U^{a_{n_j}}_Td||_{L^1}=||U^{a_{s_p}}_Td-U^{a_{s_q}}_Td||_{L^1}\leq17M\epsilon.
 \end{equation*}
 A contradiction.
\end{proof}
Since rigidity implies zero entropy, we have following corollary:
\begin{cor}
	Let $(X,T)$ be a t.d.s.. If there exists a subsequence $A$ of $\N$ with $\bar{D}(A)>0$ such that $(X,T)$ is $A$-mean-equicontinuous, then the topological entropy of $(X,T)$ is zero.
\end{cor}

\subsection{Results with respect to a function}
Let $(X,T)$ be a t.d.s., let $\mu\in M(X,T)$ and $f\in L^2(\mu)$, let $K$ be a subset of $X$ and $A=\{a_i:i\in\N\}$ be a subsequence of $\N$. 

For $x,y\in X$, define
\begin{equation*}
d_f(x,y)=|f(x)-f(y)|
\end{equation*}
and
\begin{equation*}
\bar{d}_{n,f}^A(x,y)=\frac{1}{n}\sum_{i=0}^{n-1}d_f(T^{a_i}x,T^{a_i}y).  
\end{equation*}
Then $d_f$ and $\bar{d}_{n,f}^A$ are pseudo-metrics.

\medskip

We say that $K$ is \textit{$f$-$A$-equicontinuous in the mean} if for any $\epsilon>0$, there exists $\delta>0$ such that 
\begin{equation*}
x,y \in K,d(x,y)<\delta\Rightarrow \bar{d}_{n,f}^A(x,y)<\epsilon,\forall n\in\N.
\end{equation*}

 We say $f$ is  \textit{$A$-equicontinuous in the mean} if $X$ is $f$-$A$-equicontinuous in the mean.

We say $f$ is \textit{$\mu$-$A$-equicontinuous in the mean} if for any $\tau>0$, there exists a compact subset $K$ of $X$ with $\mu(K)>1-\tau$ such that $K$ is $f$-$A$-equicontinuous in the mean.

\begin{prop}\label{pm}
	Let $(X,T)$ be a t.d.s., $\mu\in M(X,T)$ and $f\in L^2(\mu)$. Let $A=\{a_i:i\in\N\}$ be a subsequence of $\N$. Then the following statements are equivalent:
	\begin{enumerate}
		\item $f$ is $\mu$-$A$-equicontinuous in the mean.
		\item For any $\tau>0$ and $\epsilon>0$, there exist a compact subset $K$ of $X$ with $\mu(K)>1-\tau$ and $\delta>0$ such that 
		\begin{equation*}
		x,y\in K,d(x,y)<\delta\Rightarrow\bar{d}_{n,f}^A(x,y)<\epsilon,\ \forall n\in\N.
		\end{equation*}
	\end{enumerate}
\end{prop}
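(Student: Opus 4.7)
The plan is to imitate the argument already used for Proposition \ref{ppp}, where the only real content is a diagonal/telescoping measure estimate obtained by choosing the error sequence $\tau/2^l$ at stage $l$.

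The implication $(1)\Rightarrow(2)$ is immediate from the definition of $f$-$A$-equicontinuity in the mean: given any $\tau,\epsilon>0$, the set $K$ supplied by $\mu$-$A$-equicontinuity in the mean already has the property stated in (2) with some $\delta>0$ depending on $K$ and $\epsilon$.

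For $(2)\Rightarrow(1)$, fix $\tau>0$. For each $l\in\N$, apply (2) with parameters $\tau/2^l$ and $1/l$ to obtain a compact set $K_l\subset X$ with $\mu(K_l)>1-\tau/2^l$ and some $\delta_l>0$ such that
\begin{equation*}
x,y\in K_l,\ d(x,y)<\delta_l\ \Longrightarrow\ \bar{d}_{n,f}^A(x,y)<\tfrac{1}{l},\ \forall n\in\N.
\end{equation*}
Set $K=\bigcap_{l=1}^\infty K_l$. Then $K$ is compact and
\begin{equation*}
\mu(K)\geq 1-\sum_{l=1}^\infty \tfrac{\tau}{2^l}=1-\tau.
\end{equation*}
Given $\epsilon>0$, choose $l$ with $1/l<\epsilon$ and put $\delta=\delta_l$; since $K\subset K_l$, the implication above yields $\bar{d}_{n,f}^A(x,y)<1/l<\epsilon$ for all $x,y\in K$ with $d(x,y)<\delta$ and all $n\in\N$. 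Hence $K$ is $f$-$A$-equicontinuous in the mean, which shows $f$ is $\mu$-$A$-equicontinuous in the mean.

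There is no real obstacle here; the only thing to keep in mind is that the sets $K_l$ must be compact (which is guaranteed by (2)) so that their countable intersection is compact as well, and that the geometric series $\sum 2^{-l}$ must be matched against the prescribed tolerance $\tau$. This is precisely the structure of the proof of Proposition \ref{ppp}, applied verbatim to the pseudo-metric $\bar{d}_{n,f}^A$ in place of the Birkhoff-type modulus used there.
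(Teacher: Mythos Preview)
Your proof is correct and follows exactly the approach the paper intends: Proposition~\ref{pm} is stated without proof precisely because it is the verbatim analogue of Proposition~\ref{ppp}, and your intersection argument with tolerances $\tau/2^l$ and $1/l$ reproduces that proof with $\bar d^A_{n,f}$ in place of the original modulus. The only cosmetic point is that your estimate gives $\mu(K)\geq 1-\tau$ rather than the strict inequality the definition asks for, but this is harmless (e.g.\ run the argument with $\tau/2$ in place of $\tau$, or observe that the strict inequality $\mu(K_1^c)<\tau/2$ already forces $\mu(K^c)<\tau$).
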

We say that $K$ is \textit{$f$-$A$-mean-equicontinuous} if for any $\epsilon>0$, there exists $\delta>0$ such that 
\begin{equation*}
x,y \in K,d(x,y)<\delta\Rightarrow \limsup_{n\rightarrow\infty}\bar{d}_{n,f}^A(x,y)<\epsilon.
\end{equation*}

 We say that $f$ is  \textit{$A$-mean-equicontinuous} if $X$ is $f$-$A$-mean-equicontinuous.

We say that $f$ is \textit{$\mu$-$A$-mean-equicontinuous } if for any $\tau>0$, there exists a compact subset $K$ of $X$ with $\mu(K)>1-\tau$ such that $K$ is $f$-$A$-mean-equicontinuous.

\medskip

The notions of $f$-$\mu$-$A$-equicontinuous in the mean  and $f$-$\mu$-$A$-mean-equicontinuous are equivalent. Note that the proof of Theorem \ref{eq} is similar to \cite[Theorem 4.3]{h}.
\begin{thm}\label{eq}
	Let $(X,T)$ be a t.d.s., $\mu\in M(X,T)$ and $f\in L^2(\mu)$. Let $A=\{a_i:i\in\N\}$ be a subsequence  of $\N$, then the following statements are equivalent:
	\begin{enumerate}
		\item $f$ is $\mu$-$A$-equicontinuous in the mean.
		\item $f$ is $\mu$-$A$-mean-equicontinuous.
	\end{enumerate}	
\end{thm}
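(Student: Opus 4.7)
This is immediate: applying (1) with $\epsilon/2$ in place of $\epsilon$ produces $\delta>0$ with $\bar{d}_{n,f}^A(x,y)<\epsilon/2$ for every $n$, so $\limsup_n\bar{d}_{n,f}^A(x,y)\leq\epsilon/2<\epsilon$, whence any compact $K$ realizing $\mu$-$A$-equicontinuity in the mean for the given $\tau$ also realizes $\mu$-$A$-mean-equicontinuity.

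\textbf{Strategy for $(2)\Rightarrow(1)$.} I will verify condition (2) of Proposition \ref{pm}. The plan is to reduce arbitrary close pairs $(x,y)$ to close pairs $(x,y_j)$ and $(y_i,y_j)$ anchored at a finite net $\{y_1,\ldots,y_L\}$, use Egorov's theorem applied to the one-variable functions $h_{N,j}(x):=\sup_{n\geq N}\bar{d}_{n,f}^A(x,y_j)$ to secure uniform control of the tail $n\geq N^\ast$, and then apply Lusin's theorem to the finitely many functions $f\circ T^{a_k}$ with $k<N^\ast$ to handle the initial block of averages. The pseudo-metric property of $\bar{d}_{n,f}^A$ and the fact that $\mu$-$A$-mean-equicontinuity yields a single compact set that is $f$-$A$-mean-equicontinuous drive the argument.

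\textbf{Tail via a finite net and Egorov.} Fix $\tau,\epsilon>0$. Pick a compact $K^{(1)}$ with $\mu(K^{(1)})>1-\tau/4$ and $\delta_1>0$ so that $x,y\in K^{(1)}$, $d(x,y)<\delta_1$ imply $\limsup_n\bar{d}_{n,f}^A(x,y)<\epsilon/6$. Choose a $\delta_1/4$-net $\{y_1,\ldots,y_L\}\subset K^{(1)}$, and set $A_j:=\{x\in K^{(1)}:d(x,y_j)<\delta_1/4\}$; these cover $K^{(1)}$. Since $h_{N,j}\downarrow\ell_j(x):=\limsup_n\bar{d}_{n,f}^A(x,y_j)<\epsilon/6$ on $A_j$, Egorov yields a measurable $B_j\subset A_j$ with $\mu(A_j\setminus B_j)<\tau/(4L)$ and an integer $N_j$ so that $h_{N_j,j}<\epsilon/3$ on $B_j$. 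For each pair $(i,j)$ with $d(y_i,y_j)<\delta_1$ pick $M_{ij}$ so that $\bar{d}_{n,f}^A(y_i,y_j)<\epsilon/3$ for all $n\geq M_{ij}$, and put $N^\ast:=\max\bigl(\max_j N_j,\max_{i,j}M_{ij}\bigr)$. For $x\in B_i$, $y\in B_j$ with $d(x,y)<\delta_1/2$, one checks $d(y_i,y_j)<\delta_1$, and the triangle inequality for $\bar{d}_{n,f}^A$ then gives $\bar{d}_{n,f}^A(x,y)<\epsilon$ for all $n\geq N^\ast$.

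\textbf{Head via Lusin and conclusion.} Lusin's theorem applied to each $f\circ T^{a_k}$, $0\leq k<N^\ast$, yields a compact $F_k$ with $\mu(F_k)>1-\tau/(4N^\ast)$ on which these functions are uniformly continuous; set $F:=\bigcap_{k<N^\ast}F_k$ and choose $\delta_2>0$ with $x,y\in F$, $d(x,y)<\delta_2$ implying $|f(T^{a_k}x)-f(T^{a_k}y)|<\epsilon/2$ for every $k<N^\ast$, whence $\bar{d}_{n,f}^A(x,y)<\epsilon/2$ for $n\leq N^\ast$. By inner regularity extract a compact $K\subset F\cap\bigcup_j B_j$ with $\mu(K)>1-\tau$ and take $\delta:=\min(\delta_1/2,\delta_2)$; this $K$ and $\delta$ verify (2) of Proposition \ref{pm}. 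The principal obstacle sidestepped by the finite-net reduction is that a direct application of Egorov on $(X\times X,\mu\times\mu)$ would produce a large subset of the product, but not one of the form $K\times K$ as required; by anchoring to fixed reference points $y_j$ one only needs Egorov for single-variable functions, and the triangle inequality then transfers uniform control to all close pairs in the chosen $K$.
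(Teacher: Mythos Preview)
Your argument is correct and follows essentially the same route as the paper: a finite net inside a mean-equicontinuous set, an Egorov/monotone-sets step to obtain a uniform tail threshold $N^\ast$, Lusin's theorem for the finitely many head terms $f\circ T^{a_k}$, and then the triangle inequality for $\bar d^A_{n,f}$. The only cosmetic difference is that the paper takes its cells to be pairwise disjoint compacta $E_j$ and chooses $\delta$ smaller than the minimal inter-cell distance, forcing any close pair $x,y$ into the \emph{same} cell and allowing a two-step triangle inequality through a single anchor $x_{j_0}$, whereas you permit $x\in B_i$, $y\in B_j$ and insert the extra $y_i\to y_j$ leg.
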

\begin{proof}
	(1)$\Rightarrow$(2) is obvious.
	
	(2)$\Rightarrow$(1): Fix $\tau>0$ and $\epsilon>0$. There exists a compact set $K_0$ with $\mu(K_0)>1-\dfrac{\tau}{2}$ such that $K_0$ is $f$-$A$-mean-equicontinuous. Then there exists $\delta_0>0$ such that for all $x,y\in K_0$ with $d(x,y)<\delta_0$, we have 
	\begin{equation*}
	\limsup_{n\rightarrow\infty}\frac{1}{n}\sum_{i=0}^{n-1}|f(T^{a_i}x)-
	f(T^{a_i}y)|<\frac{\epsilon}{2}.
	\end{equation*}
By the compactness of $X$, there exist $K_0=\bigcup\limits_{i=1}^mX_i$, where $diamX_i<\delta_0,1\leq i\leq m,\ m\in\N.$ Without loss of generality, assume $X_i\neq \emptyset$ and choose $x_i\in X_i,1\leq i\leq m.$ For $1\leq j\leq m$ and $N\in\N$, let 
	\begin{equation*}
	A_N(x_j)=\{y\in X_j:\frac{1}{n}\sum_{i=0}^{n-1}|f(T^{a_i}x_j)-
	f(T^{a_i}y)|<\frac{\epsilon}{2},\forall n>N\}.
	\end{equation*}
	It is easy to see that for each $1\leq j\leq m$, $\{A_N(x_j)\}_{N=1}^\infty$ is increasing and 
	\begin{equation*}
	X_j=\bigcup\limits_{N=1}^\infty A_N(x_j).
	\end{equation*}
	 Choose $N\in\N$ such that 
\begin{equation*}
\mu(\bigcup\limits_{j=1}^mA_N(x_j))>1-\dfrac{\tau}{2}.
\end{equation*}			
Since $f,U_T^{a_1}f,\ldots,U_T^{a_N}f\in L^2(\mu)$, there exists a compact set $K_1$ with $\mu(K_1)>1-\dfrac{\tau}{2}$ such that
$f,U_T^{a_1}f,\ldots,U_T^{a_N}f$ are uniformly continuous on $K_1$. Then there exists $\delta_1>0$ such that for any $x,y\in K_1$ and $d(x,y)<\delta_1$, we have
\begin{equation*}
|f(x)-f(y)|<\epsilon,|f(T^{a_n}x)-
f(T^{a_n}y)|<\epsilon,1\leq n\leq N. 
\end{equation*}
 Note that
 \begin{equation*}
  \mu(K_1\cap\bigcup_{j=1}^mA_N(x_j))>1-\tau,
 \end{equation*} 
by the regularity of $\mu$, we can find  pairwise disjoint compact sets
\begin{equation*}
 E_j\subset A_N(x_j),1\leq j\leq m
\end{equation*} 
such that
\begin{equation*}
\bigcup_{i=j}^mE_j\subset K_1\cap\bigcup_{j=1}^mA_N(x_j),\ \mu(\bigcup_{j=1}^mE_j)>1-\tau.
\end{equation*} 
Let $K=\bigcup\limits_{j=1}^mE_j$, then $K$ is compact and $\mu(K)>1-\tau$. 
Choose $\delta$ with 
\begin{equation*}
0<\delta<\min\{\delta_1,d(E_i,E_j),1\leq i\neq j\leq m\}.
\end{equation*}
 For any $x,y\in K$ and $d(x,y)<\delta$, if $1\leq n\leq N$, since $x,y\in K_1$, we have $\dfrac{1}{n}\sum\limits_{i=0}^{n-1}|f(T^{a_i}x)-
 f(T^{a_i}y)|<\epsilon$; if $n>N$, since $d(x,y)<\delta$, there exists $1\leq j_0\leq m$ such that $x,y\in E_{j_0}$. Then
\begin{equation*}
\frac{1}{n}\sum_{i=0}^{n-1}|f(T^{a_i}x_{j_0})-
f(T^{a_i}x)|<\frac{\epsilon}{2},\
\frac{1}{n}\sum_{i=0}^{n-1}|f(T^{a_i}x_{j_0})-
f(T^{a_i}y)|<\frac{\epsilon}{2}.
\end{equation*}
Hence
$\dfrac{1}{n}\sum\limits_{i=0}^{n-1}|f(T^{a_i}x)-
f(T^{a_i}y)|<\epsilon.$
By Proposition \ref{pm}, $f$ is $\mu$-$A$-equicontinuous in the mean.
\end{proof}

The main theorem of this subsection is:
\begin{thm}\label{fmean}
	Let $(X,T)$ be a t.d.s. $\mu\in M(X,T)$ and $f\in L^2(\mu)$. If there exists a subsequence $A=\{a_i:i\in\N\}$ of $\N$ with $\bar{D}(A)>0$ such that $f$ is $\mu$-$A$-mean-equicontinuous, then $f$ is rigid.
\end{thm}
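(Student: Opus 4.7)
The plan is to transcribe the proof of Theorem \ref{mean} into the function setting, replacing the metric $d$ on $(X\times X,\mu\times\mu)$ by the function $f$ on $(X,\mu)$, and to handle $L^2$-unboundedness via truncation.

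First I would reduce to the case $|f|\le 1/2$. The truncation $f^{(m)}=(-m)\vee(f\wedge m)$ satisfies $|f^{(m)}(x)-f^{(m)}(y)|\le |f(x)-f(y)|$, so $\mu$-$A$-mean-equicontinuity passes from $f$ to each bounded $f^{(m)}$; granted rigidity of every $f^{(m)}$, a diagonal extraction produces a single $\{t_k\}$ along which each $f^{(m)}$ is $L^2$-rigid, and since $f^{(m)}\to f$ in $L^2(\mu)$ the triangle inequality yields $U_T^{t_k}f\to f$ in $L^2(\mu)$.

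For bounded $f$ the goal becomes $L^1(\mu)$-pre-compactness of $\{U_T^{a_n}f:n\in\N\}$. This upgrades to $L^2$-pre-compactness because orbit differences have $\|\cdot\|_\infty\le 1$ and hence $\|\cdot\|_{L^2}^2\le\|\cdot\|_{L^1}$; the closing diagonal argument from Theorem \ref{f} then extracts a subsequence witnessing rigidity. The $L^1$-pre-compactness is proved by contradiction exactly as in Theorem \ref{mean}: fix $M$ with $\bar{D}(A)>1/M$, suppose $\|U_T^{a_{n_i}}f-U_T^{a_{n_j}}f\|_{L^1}>17M\epsilon$ for all $i\ne j$, and use Theorem \ref{eq} together with Proposition \ref{pm} to obtain a compact $K$ with $\mu(K)>1-\epsilon$ and $\delta>0$ controlling $\bar d^{A}_{n,f}$ uniformly in $n$. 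Partition $K=\bigsqcup_{i=1}^{N}X_i$ with $\operatorname{diam}(X_i)<\delta$ and representatives $x_i\in X_i$, and introduce the step functions $f_s=\sum_{i=1}^{N}f(T^{a_s}x_i)\,1_{X_i}$ (extended by $0$ on $K^c$). The averaging bound $\sum_{s=1}^{n_0}\|f_s-U_T^{a_s}f\|_{L^1}\le n_0\epsilon$ combined with $|A\cap[1,n_0]|>n_0/M$ yields a good set $S\subset[1,n_0]$ of size $\ge n_0/(2M)$ on which $a_s\le n_0$ and $\|f_s-U_T^{a_s}f\|_{L^1}\le 2M\epsilon$. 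A pigeonhole on the cube $[-1/2,1/2]^N$, divided into $C=(1/[\epsilon]+1)^N$ sub-cubes, extracts $S_0\subset S$ with $|S_0|\ge n_0/(2MC)$ and $\|f_s-f_{s'}\|_{L^1}\le\epsilon$; choosing $n_0>a_{n_{10MC}}$, the counting $|S_0|\cdot 10MC>2n_0$ forces a collision $a_{s_p}+a_{n_i}=a_{s_q}+a_{n_j}$ with $i\ne j$, and $T$-invariance of $\mu$ collapses this to $\|U_T^{a_{n_i}}f-U_T^{a_{n_j}}f\|_{L^1}=\|U_T^{a_{s_p}}f-U_T^{a_{s_q}}f\|_{L^1}\le(4M+1)\epsilon<17M\epsilon$, contradicting the assumption.

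The combinatorial core is formally identical to Theorem \ref{mean}; the delicate point is the reduction to bounded $f$, because the original proof uses $d\le 1$ both in the $\int_{K^c}$ estimate and in embedding the step functions into a bounded finite-dimensional cube for the pigeonhole. The contractivity $|f^{(m)}(x)-f^{(m)}(y)|\le|f(x)-f(y)|$ sidesteps this cleanly, and the final $L^2$-density step extends rigidity from bounded approximants to all of $L^2(\mu)$.
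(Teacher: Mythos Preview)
Your argument is correct and takes a genuinely different route from the paper's. The paper stays on the product space: it observes that replacing $d$ by $d_f(x,y)=|f(x)-f(y)|$ in the proof of Theorem~\ref{mean} yields pre-compactness of $\{U_T^{a_n}d_f:n\in\N\}$ in $L^1(\mu\times\mu)$; from this it extracts a subsequence $S$ with $U_T^{s_n}d_f\to d_f$ a.e., invokes Theorem~\ref{fm} to obtain $\mu$-$S$-equicontinuity of $f$, and then Theorem~\ref{f} to conclude rigidity. You instead remain on $X$: you reduce to bounded $f$ via the $1$-Lipschitz truncation (so mean-equicontinuity is inherited), and run the combinatorics of Theorem~\ref{mean} directly on the one-variable step functions $f_s=\sum_i f(T^{a_s}x_i)\,1_{X_i}$, with the pigeonhole in an $N$-dimensional cube rather than an $N^2$-dimensional one. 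Your route is more self-contained and handles the unboundedness of $f$ explicitly---a point the paper's ``still works'' glosses over, since both the estimate on $(X_0\times X)\cup(X\times X_0)$ and the pigeonhole in Theorem~\ref{mean} rely on $d\le 1$. The paper's approach, in exchange, is more modular: it recycles Theorems~\ref{mean}, \ref{fm} and \ref{f} wholesale.

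One small caveat: your sentence ``granted rigidity of every $f^{(m)}$, a diagonal extraction produces a single $\{t_k\}$'' is imprecise as written, since rigidity of each $f^{(m)}$ alone does not furnish nested sequences to diagonalize over. But this is harmless here: what your bounded-case argument actually proves is pre-compactness of each $\{U_T^{a_n}f^{(m)}\}$ in $L^2(\mu)$, and since $\|U_T^{a_n}f-U_T^{a_n}f^{(m)}\|_{L^2}=\|f-f^{(m)}\|_{L^2}\to 0$ uniformly in $n$, pre-compactness passes directly to $\{U_T^{a_n}f\}$, after which the endgame of Theorem~\ref{f} finishes the proof.
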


In order to prove  Theorem \ref{fmean}, we need the following theorem. The idea is from \cite[Theorem 4.4]{h}.
\begin{thm}\label{fm}
	Let $(X,T)$ be a t.d.s. $\mu\in M(X,T)$ and $f\in L^2(\mu)$. If there exists a subsequence $A=\{a_i:i\in\N\}$ of $\N$ such that  $U_T^{a_n}d_f\stackrel{a.e.}\longrightarrow d_f$,  then  $f$ is $\mu$-$A$-equicontinuous.
\end{thm}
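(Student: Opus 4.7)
The plan is to emulate the proof of Theorem~\ref{a}, but on the product space $(X \times X, \mu \times \mu)$, since our hypothesis concerns $d_f$ which lives there. The role of the $\mu$-measurable sets $A_N$ appearing in the proof of Theorem~\ref{a} is played by the $\mu \times \mu$-measurable sets
\begin{equation*}
B_N = \{(x,y) \in X \times X : |U_T^{a_n}d_f(x,y) - d_f(x,y)| < \epsilon/3 \text{ for all } n > N\},
\end{equation*}
which are increasing in $N$ and whose union has full $\mu \times \mu$-measure by the hypothesis $U_T^{a_n}d_f\stackrel{a.e.}\longrightarrow d_f$. So, given $\tau, \epsilon > 0$, one may choose $N$ with $(\mu \times \mu)(B_N)$ as close to $1$ as desired.

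Next, I would apply Lusin's theorem to the finite collection $f, U_T^{a_1}f, \ldots, U_T^{a_N}f$ to obtain a compact $K_0 \subset X$ with $\mu(K_0) > 1 - \tau/2$ on which all of them are uniformly continuous. Pick $\delta > 0$ such that for $x, y \in K_0$ with $d(x, y) < \delta$ we have $|f(x) - f(y)| < \epsilon/3$ and $|f(T^{a_j}x) - f(T^{a_j}y)| < \epsilon$ for every $1 \leq j \leq N$. For $n \leq N$ the estimate is then immediate. For $n > N$, if moreover $(x, y) \in B_N$, combining $|U_T^{a_n}d_f(x,y) - d_f(x,y)| < \epsilon/3$ with $d_f(x,y) = |f(x) - f(y)| < \epsilon/3$ gives $|f(T^{a_n}x) - f(T^{a_n}y)| < 2\epsilon/3 < \epsilon$. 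With both ranges controlled, Proposition~\ref{p} will upgrade the estimate to the statement that $f$ is $\mu$-$A$-equicontinuous.

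The main obstacle is the descent from $B_N \subset X \times X$ to a compact $K \subset K_0$ with $\mu(K) > 1 - \tau$ such that every pair $x, y \in K$ with $d(x, y) < \delta$ actually lies in $B_N$. A naive Fubini argument only yields $\mu(\{x \in X: \mu(B_N^x) > 1 - \tau/4\}) > 1 - \tau/4$, which is insufficient to guarantee membership of a given rectangle in $B_N$. Following the idea from \cite[Theorem 4.4]{h}, I would resolve this via a triangle-inequality trick: for $x, y$ in the candidate compact set $K$, the intersection $B_N^x \cap B_N^y$ has $\mu$-measure close to $1$, so by a Lusin/inner-regularity refinement one can pick an auxiliary point $z$ in this intersection which is also $d$-close to $y$ (so that $|f(z) - f(y)|$, hence $d_f(z,y)$, is small, by uniform continuity of $f$ on $K_0$). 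The triangle inequality
\begin{equation*}
U_T^{a_n}d_f(x,y) \leq U_T^{a_n}d_f(x,z) + U_T^{a_n}d_f(z,y) < d_f(x,z) + d_f(z,y) + 2\epsilon/3
\end{equation*}
then closes the estimate once $d_f(x,z)$ and $d_f(z,y)$ are both made $< \epsilon/6$. Making this selection of $z$ work uniformly over all close pairs $(x,y) \in K \times K$ is the delicate part of the bookkeeping, and will likely require choosing $K$ inside a Fubini-good subset of $K_0$ and shrinking $\delta$ accordingly.
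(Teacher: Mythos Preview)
Your overall architecture is correct and matches the paper's: define the Egorov-type sets $B_N$ on the product, pick $N$ large, apply Lusin to $f,U_T^{a_1}f,\ldots,U_T^{a_N}f$ to handle $n\le N$, and for $n>N$ use a triangle inequality through an auxiliary point together with membership in $B_N$. You also correctly identify the descent from $B_N\subset X\times X$ to a compact $K\subset X$ as the only nontrivial step.

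The one place your sketch is not yet a proof is the ``dynamic $z$'' mechanism. As written, you want, for each close pair $x,y\in K$, a point $z\in (B_N)_x\cap (B_N)_y$ that is also $d$-close to $y$. Large measure of $(B_N)_x\cap (B_N)_y$ does not by itself guarantee that this intersection meets an arbitrarily small $d$-ball around $y$; for that you would need a lower bound on $\mu$ of small balls, which is unavailable. The paper avoids this by \emph{pre-selecting finitely many anchor points}: cover $K_0$ by finitely many pieces $X_1,\ldots,X_C$ of small $d$-diameter, use Fubini (with threshold $(\tau/4C)^2$) to find a set $D$ of points with good slices, choose one anchor $y_i\in X_i\cap D$ per piece, and then build $K$ as a disjoint union of compact sets $E_i\subset B_{d_f}(y_i,\epsilon/4)\cap\bigcap_{j=1}^C (B_N)_{y_j}\cap K_1$. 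Now if $x,y\in K$ are $\delta$-close they lie in the same $E_{i_0}$, and the triangle goes through the fixed $y_{i_0}$ rather than a pair-dependent $z$: since $x,y\in (B_N)_{y_{i_0}}$ and $d_f(y_{i_0},x),d_f(y_{i_0},y)<\epsilon/4$, one gets $U_T^{a_n}d_f(x,y)\le U_T^{a_n}d_f(y_{i_0},x)+U_T^{a_n}d_f(y_{i_0},y)<\epsilon$.

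In short: replace your per-pair auxiliary $z$ by a fixed finite family of anchors chosen once via a covering plus Fubini; the finite intersection $\bigcap_i (B_N)_{y_i}$ is then what makes the estimate uniform. With that adjustment your proof is the paper's proof.
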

\begin{proof}
Given $\tau>0$ and $\epsilon>0$. Since $f\in L^2(\mu)$, there exists a compact set $K_0$ with $\mu(K_0)>1-\dfrac{\tau}{4}$ such that $f$ is uniformly continuous on $K_0$. Then there exists $\delta_0>0$ such that for any $x,y\in K_0$ and $d(x,y)<\delta_0$, we have $d_f(x,y)<\dfrac{\epsilon}{8}$. By the compactness of $X$, there exist $K_0=\bigcup\limits_{i=1}^CX_i$, where $diamX_i<\delta_0,1\leq i\leq C,\ C\in\N$. Without loss of generality, assume $X_i\neq \emptyset$ and choose $x_i\in X_i,\ 1\leq i\leq C.$ It is easy to see that 
$K_0\subset \bigcup\limits_{i=1}^CB_{d_f}(x_i,\dfrac{\epsilon}{8}).$
It follows that
\begin{equation*}
\mu(\bigcup_{i=1}^CB_{d_f}(x_i,\dfrac{\epsilon}{8}))>1-\dfrac{\tau}{4}. 
\end{equation*} 
 
  For any $N\in\N$, let
\begin{equation*}
A_N=\{(x,y):|U_T^{a_n}d_f(x,y)-d_f(x,y)|<\dfrac{\epsilon}{4}, \ \forall n>N\}.
\end{equation*}
It is easy to see that 
\begin{equation*}
\{(x,y):U_T^{a_n}d_f(x,y)\rightarrow d_f(x,y)\}\subset \bigcup_{N=1}^\infty A_N.
\end{equation*}
Note that $\{A_N\}$ is increasing, so there exists $N\in\N$ such that $\mu\times\mu(A_N)>1-(\dfrac{\tau}{4C})^2$. By Fubini's theorem there exists  a subset $D$ of $X$ with $\mu(D)>1-\dfrac{\tau}{4C}$ such that $\forall x\in D,\mu((A_N)_{x})>1-\dfrac{\tau}{4C},$ where $(A_N)_{x}=\{y\in X:(x,y)\in A_N\}$.  

Note that
\begin{equation*}
\mu(\bigcup_{i=1}^C(B_{d_f}(x_i,\dfrac{\epsilon}{8})\cap D))>1-\dfrac{\tau}{2},
\end{equation*} 
 without loss of generality, assume  $B_{d_f}(x_i,\dfrac{\epsilon}{8})\cap D\neq\emptyset$ and choose 
 \begin{equation*}
 y_i\in B_{d_f}(x_i,\dfrac{\epsilon}{8})\cap D,1\leq i\leq C.
 \end{equation*}
  Then 
  \begin{equation*}
  \mu((A_N)_{y_i})>1-\dfrac{\tau}{4C},1\leq i\leq C.
  \end{equation*} 
  Since $B_{d_f}(x_i,\dfrac{\epsilon}{8})\subset B_{d_f}(y_i,\dfrac{\epsilon}{4})$, we have \begin{equation*}
\mu(\bigcup_{i=1}^CB_{d_f}(y_i,\dfrac{\epsilon}{4}))\geq\mu(D\cap\bigcup_{i=1}^CB_{d_f}(y_i,\dfrac{\epsilon}{4}))>1-\dfrac{\tau}{2}.
\end{equation*}
Since $U_T^{a_1}f,\ldots,U_T^{a_N}f\in L^2(\mu)$, there exists a compact set $K_1$ with $\mu(K_1)>1-\dfrac{\tau}{4}$ such that $U_T^{a_1}f,\ldots,U_T^{a_N}f$ are uniformly continuous on $K_1$. Then there exists $\delta_1>0$ such that for any $x,y\in K_1$ and $d(x,y)<\delta_1$, we have
\begin{equation*}
U_T^{a_n}d_f(x,y)<\epsilon,1\leq n\leq N.
\end{equation*} 
Note that
\begin{equation*}
\mu(K_1\cap\bigcup_{i=1}^CB_{d_f}(y_i,\dfrac{\epsilon}{4})\cap\bigcap_{i=1} ^C(A_N)_{y_i})>1-\tau,
 \end{equation*}
  by the regularity of $\mu$, we can find  pairwise disjoint compact sets
  \begin{equation*}
  E_i\subset B_{d_f}(y_i,\dfrac{\epsilon}{4}),1\leq i\leq C
  \end{equation*}  such that
  \begin{equation*}
  \bigcup_{i=1}^CE_i\subset K_1\cap\bigcup_{i=1}^CB_{d_f}(y_i,\dfrac{\epsilon}{4})\cap\bigcap_{i=1} ^C(A_N)_{y_i}
  \end{equation*} 
   and $\mu(\bigcup\limits_{i=1}^CE_i)>1-\tau.$ Let $K=\bigcup\limits_{i=1}^CE_i$, then $K$ is compact and $\mu(K)>1-\tau$. Choose $\delta$ with 
   \begin{equation*}
   0<\delta<\min\{\delta_1,d(E_i,E_j),1\leq i\neq j\leq C\}.
   \end{equation*}
 For any $x,y\in K$ and $d(x,y)<\delta$, if $1\leq n\leq N$, since $x,y\in K_1$, $U_T^{a_n}d_f(x,y)<\epsilon$; if $n>N$, since $d(x,y)<\delta$, there exists $1\leq i_0\leq C$ such that $x,y\in E_{i_0}$. Then
\begin{equation*}
U_T^{a_n}d_f(x,y)\leq U_T^{a_n}d_f(y_{i_0},x)+U_T^{a_n}d_f(y_{i_0},y).
\end{equation*}
Note that $x,y\in (A_N)_{y_{i_0}}$, we have $(y_{i_0},x),(y_{i_0},y)\in A_N$, hence
\begin{equation*}
U_T^{a_n}d_f(y_{i_0},x)<d_f(y_{i_0},x)+\dfrac{\epsilon}{4}.
\end{equation*}
Since $E_{i_0}\subset B_{d_f}(y_{i_0},\dfrac{\epsilon}{4}),$ we have $d_f(y_{i_0},x)<\dfrac{\epsilon}{4},$ it follows that $U_T^{a_n}d_f(y_{i_0},x)<\dfrac{\epsilon}{2}$. Similarly  $U_T^{a_n}d_f(y_{i_0},y)<\dfrac{\epsilon}{2}$, so $U_T^{a_n}d_f(x,y)<\epsilon$. By Proposition \ref{p}, $f$ is $\mu$-$A$-equicontinuous.
	\end{proof}

Now we prove Theorem \ref{fmean}:
\begin{proof}[proof of Theorem \ref{fmean}]
	Replacing $U_T^{a_n}d$ by $U_T^{a_n}d_f$, the proof of Theorem \ref{mean} still works, so we have ${\{U_T^{a_n}d_f:n\in \N\}}$  is pre-compact in $L^1(\mu\times\mu)$. Then there exists a subsequence $S=\{s_i:i\in\N\}$ of $\N$ such that  $U_T^{s_n}d_f\stackrel{a.e.}\longrightarrow s_f$.  By
	Theorem	\ref{fm},  $f$ is $\mu$-$S$-equicontinuous. By Theorem \ref{f}, $f$ is rigid.
	\end{proof} 
\begin{rem}
When considering the measure-theoretic mean equicontinuity along some subsequence $A$ of $\N$, things may get relatively complicated, since some useful tools and methods in ergodic theory do not work. But there are still some interesting questions to consider.
\end{rem}
\begin{ques}
	Is the condition $\bar{D}(A)>0$ in Theorem \ref{mean} necessary?
\end{ques}
\begin{ques}
	If we take $A$ as $\mathbb{P}$, the set of all prime numbers, can we obtain some interesting results?
\end{ques}

\bibliographystyle{plain}
\bibliography{measure-theoretic_equicontinuity_and_rigidity}

\begin{thebibliography}{10}

\bibitem{me1}
Joseph Auslander.
\newblock Mean-{$L$}-stable systems.
\newblock {\em Illinois J. Math.}, 3:566--579, 1959.

\bibitem{ra}
V.~Bergelson, A.~del Junco, M.~Lema\'{n}czyk, and J.~Rosenblatt.
\newblock Rigidity and non-recurrence along sequences.
\newblock {\em Ergodic Theory Dynam. Systems}, 34(5):1464--1502, 2014.

\bibitem{ri}
Hillel Furstenberg and Benjamin Weiss.
\newblock The finite multipliers of infinite ergodic transformations.
\newblock In {\em The structure of attractors in dynamical systems ({P}roc.
  {C}onf., {N}orth {D}akota {S}tate {U}niv., {F}argo, {N}.{D}., 1977)}, volume
  668 of {\em Lecture Notes in Math.}, pages 127--132. Springer, Berlin, 1978.

\bibitem{fe1}
Felipe Garc\'{i}a-Ramos.
\newblock A characterization of {$\mu$}-equicontinuity for topological
  dynamical systems.
\newblock {\em Proc. Amer. Math. Soc.}, 145(8):3357--3368, 2017.

\bibitem{fe2}
Felipe Garc\'{i}a-Ramos.
\newblock Weak forms of topological and measure-theoretical equicontinuity:
  relationships with discrete spectrum and sequence entropy.
\newblock {\em Ergodic Theory Dynam. Systems}, 37(4):1211--1237, 2017.

\bibitem{f1}
Felipe {Garc{\'\i}a-Ramos} and Brian {Marcus}.
\newblock {Mean sensitive, mean equicontinuous and almost periodic functions
  for dynamical systems}.
\newblock {\em arXiv e-prints}, page arXiv:1509.05246, Sep 2015.

\bibitem{Gil1}
Robert~H. Gilman.
\newblock Classes of linear automata.
\newblock {\em Ergodic Theory Dynam. Systems}, 7(1):105--118, 1987.

\bibitem{Gil2}
Robert~H. Gilman.
\newblock Periodic behavior of linear automata.
\newblock In {\em Dynamical systems ({C}ollege {P}ark, {MD}, 1986--87)}, volume
  1342 of {\em Lecture Notes in Math.}, pages 216--219. Springer, Berlin, 1988.

\bibitem{ur}
S.~Glasner and D.~Maon.
\newblock Rigidity in topological dynamics.
\newblock {\em Ergodic Theory Dynam. Systems}, 9(2):309--320, 1989.

\bibitem{h}
Wen {Huang}, Jian {Li}, Jean-Paul {Thouvenot}, Leiye {Xu}, and Xiangdong {Ye}.
\newblock {Bounded complexity, mean equicontinuity and discrete spectrum}.
\newblock {\em arXiv e-prints}, page arXiv:1806.02980, June 2018.

\bibitem{hly}
Wen Huang, Ping Lu, and Xiangdong Ye.
\newblock Measure-theoretical sensitivity and equicontinuity.
\newblock {\em Israel J. Math.}, 183:233--283, 2011.

\bibitem{hsy}
Wen Huang, Song Shao, and Xiangdong Ye.
\newblock Mixing via sequence entropy.
\newblock In {\em Algebraic and topological dynamics}, volume 385 of {\em
  Contemp. Math.}, pages 101--122. Amer. Math. Soc., Providence, RI, 2005.

\bibitem{hy1}
Wen Huang and Xiangdong Ye.
\newblock Topological complexity, return times and weak disjointness.
\newblock {\em Ergodic Theory Dynam. Systems}, 24(3):825--846, 2004.

\bibitem{hyz}
Wen Huang, Xiangdong Ye, and Guohua Zhang.
\newblock A local variational principle for conditional entropy.
\newblock {\em Ergodic Theory Dynam. Systems}, 26(1):219--245, 2006.

\bibitem{li}
Jian Li, Siming Tu, and Xiangdong Ye.
\newblock Mean equicontinuity and mean sensitivity.
\newblock {\em Ergodic Theory Dynam. Systems}, 35(8):2587--2612, 2015.

\bibitem{me2}
Jiahao {Qiu} and Jianjie {Zhao}.
\newblock {A note on mean equicontinuity}.
\newblock {\em arXiv e-prints}, page arXiv:1806.09987, Jun 2018.

\bibitem{metric}
Anatoly~M. Vershik, Pavel~B. Zatitskiy, and Fedor~V. Petrov.
\newblock Geometry and dynamics of admissible metrics in measure spaces.
\newblock {\em Cent. Eur. J. Math.}, 11(3):379--400, 2013.

\bibitem{pw}
Peter Walters.
\newblock {\em An introduction to ergodic theory}, volume~79 of {\em Graduate
  Texts in Mathematics}.
\newblock Springer-Verlag, New York-Berlin, 1982.

\bibitem{f2}
Tao {Yu}.
\newblock {Measure-theoretic mean equicontinuity and bounded complexity}.
\newblock {\em arXiv e-prints}, page arXiv:1807.05868, Jul 2018.

\end{thebibliography}

\address{Wu Wen-Tsun Key Laboratory of Mathematics, USTC, Chinese Academy of Sciences and
	Department of Mathematics, University of Science and Technology of China,
	Hefei, Anhui, 230026, P.R. China.}

\email{cfz@mail.ustc.edu.cn}

\end{document}